\newtheorem{lem}{Lemma}[section]
\newtheorem{lem*}{Lemma}
\newtheorem{thm}[lem]{Theorem}
\newtheorem{pro}[lem]{Proposition}
\newtheorem{cor}[lem]{Corollary}
\newtheorem{exa}[lem]{Example}
\newtheorem{conj}{Conjecture}
\newtheorem{defi}[lem]{Definition}
\newtheorem{defi*}[lem*]{Definition}
\newtheorem{rem}[lem]{Remarks}
\newcommand{\exc}{{\mathsf{exc}}}
\newcommand{\Des}{{\mathsf{Des}}}
\newcommand{\fwex}{{\mathsf{fwex}}}
\newcommand{\cro}{{\mathsf{cro}}}
\newcommand{\nest}{{\mathsf{nest}}}
\newcommand{\croB}{{\mathsf{cro_B}}}
\newcommand{\nestB}{{\mathsf{nest_B}}}
\newcommand{\nega}{{\mathsf{neg}}}
\newcommand{\wex}{{\mathsf{wex}}}
\newcommand{\Alt}{{\mathsf{Alt}}}
\newcommand{\cs}{\mathsf{cs}}
\newcommand{\pat}{\mathsf{pat}}
\newcommand{\cab}{\mathsf{31\textnormal{-}2}}
\newcommand{\acb}{\mathsf{13\textnormal{-}2}}
\newcommand{\bca}{\mathsf{2\textnormal{-}31}}
\renewcommand{\SS}{{\mathcal{S}}}
\newcommand{\F}{\mathcal{F}}
\newcommand{\G}{\mathcal{G}}
\renewcommand{\H}{\mathcal{H}}
\newcommand{\M}{\mathcal{M}}
\newcommand{\T}{\mathcal{T}}
\newcommand{\U}{\mathsf{U}}
\renewcommand{\L}{\mathsf{L}}
\newcommand{\W}{\mathsf{W}}
\newcommand{\D}{\mathsf{D}}
\begin{document}




\begin{titlepage} 
	\newcommand{\HRule}{\rule{\linewidth}{0.5mm}} 
	
	\center 
	
	
	\textsc{\LARGE Department of Mathematics\\
	National Taiwan Normal University	
	}\\[1.5cm] 

	\vfill

	A thesis submitted in partial fulfilment of the requirements for the degree of\\
	\textsc{\Large Master of Science}\\[0.5cm] 
	in\\
	\textsc{\large Mathematics}\\[0.5cm] 
	
	
	\HRule\\[0.4cm]
	
	{\huge\bfseries Signed Countings of Type B and D Permutations
and $t,q$-Euler numbers}\\[0.4cm] 
	
	\HRule\\[1.5cm]
	
	
	\begin{minipage}{0.4\textwidth}
		\begin{flushleft}
			\large
			\textit{Author}\\
			Hsin-Chieh \textsc{Liao} 
		\end{flushleft}
	\end{minipage}
	~
	\begin{minipage}{0.4\textwidth}
		\begin{flushright}
			\large
			\textit{Supervisor}\\
			Dr. Sen-Peng \textsc{Eu} 
		\end{flushright}
	\end{minipage}
	
	
	
	\vfill\vfill\vfill 
	
	{\large\today} 
	
	
	 
	
	\vfill 
	
\end{titlepage}


\abstract{
   A classical result states that the parity balance of number of excedances of all permutations (derangements, respectively) of length $n$ is the Euler number. In 2010, Josuat-Verg\`{e}s gives a $q$-analogue with $q$ representing the number of crossings. We extend this result to the permutations (derangements, respectively) of type B and D. It turns out that the signed counting are related to the derivative polynomials of $\tan$ and $\sec$.
   
   Springer numbers defined by Springer can be regarded as an analogue of Euler numbers defined on every Coxeter group. In 1992  Arnol'd showed that the Springer numbers of classical types A, B, D count various combinatorial objects, called snakes. In 1999 Hoffman found that derivative polynomials of $\sec x$ and $\tan x$ and their subtraction evaluated at certain values count exactly the number of snakes of certain types. Then Josuat-Verg\`{e}s studied the $(t,q)$-analogs of derivative polynomials   $Q_n(t,q)$, $R_n(t,q)$ and showed that as setting $q=1$ the polynomials are enumerators of snakes with respect to the number of sign changing. Our second result is to find a combinatorial interpretations of   $Q_n(t,q)$ and $R_n(t,q)$ as  enumerator of the snakes, although the outcome is somewhat messy.\\

\noindent Key words: Signed permutations, Euler numbers, Springer numbers, q-analogue, continued fractions, weighted bicolored Motzkin paths
}
\tableofcontents

\chapter{Motivation of the problems}
In this chapter the classical signed counting results of Euler and Roselle on permutations and derangements with respect to Eulerian statistics are presented. Then we introduce  Josuat-Verg\`{e}s'  $q$-analogues with $q$ representing the number of crossings. These classical signed counting results serve as motivations of our study.\\
\section{Signed countings on Permutations and Derangements}

Let $\mathfrak{S}_n$ denote the set of permutations on $[n]:=\{1,2,\ldots,n\}$ and $\mathfrak{S}_n^*$ denote the set of derangements on $[n]$. A permutation $\sigma\in\mathfrak{S}_n$ is a bijection on $[n]$ and we may write $\sigma$ as $\sigma_1\sigma_2\ldots\sigma_n$ ($\sigma_i\in [n]$) if $\sigma(i)=\sigma_i$  for all $1\le i\le n$. This is called the one-line natation of the permutation $\sigma$.

\begin{defi}
For a permutation
$\sigma=\sigma_1\sigma_2\cdots\sigma_n\in\mathfrak{S}_n$, an \emph{excedance} (\emph{weak excedance}, respectively) is an integer $i\in [n]$ such that $\sigma_i>i$ ($\sigma_i\ge i$, respectively). Let $\exc(\sigma)$ and $\wex(\sigma)$ denote the number of excedances and the number of weak excedances of $\sigma$, respectively.
\end{defi}

An elementary result is that the statistics $\exc$ and $\wex$ have same distribution in $\mathfrak{S}_n$ and $\sum_{\sigma\in\mathfrak{S}_n}y^{\wex(\sigma)}=y\sum_{\sigma\in\mathfrak{S}_n}y^{\exc(\sigma)}$. The polynomial $A_n(y)=\sum_{\sigma\in\mathfrak{S}_n}y^{\wex(\sigma)}$ is called the \emph{Eulerian polynomial} and $A_{n,k}=\#\{\sigma\in\mathfrak{S}_n| \wex(\sigma)=k\}$ is called the \emph{Eulerian number}.

\begin{defi}
The classical \emph{Euler numbers} $E_n$ are defined by
\[
\sum_{n\ge 0}E_n\frac{x^n}{n!}=\tan x+\sec x=1+x+\frac{x^2}{2!}+2\frac{x^3}{3!}+5\frac{x^4}{4!}+16\frac{x^5}{5!}+\ldots.
\]
The numbers $E_{2n}$ are called the \emph{secant numbers} and the numbers $E_{2n+1}$ are called the \emph{tangent numbers}.
\end{defi}
It is well-known that $E_n$ counts the number of \emph{alternating permutations} in $\mathfrak{S}_n$, i.e., $\sigma\in\mathfrak{S}_n$ such that
$\sigma_1>\sigma_2<\sigma_3>\ldots \sigma_n$. For example, when $n=3$, there are $2$ alternating permutations $213,312$; when $n=4$ there are $5$ alternating permutations $2143, 3142, 3241, 4132, 4231$. A permutation $\sigma$ satisfying $\sigma_1<\sigma_2>\sigma_3<\ldots \sigma_n$ is called a reverse alternating permutation.

An interesting result states that when we evaluate the Eulerian polynomial $A_n(y)$ at $y=-1$ depending on the parity of $n$  we either obtain $0$ or tangent numbers.
\begin{thm}[Euler\cite{Euler}; Foata, Sch\"{u}tzenberger\cite{Foata-Schu}]
\begin{equation}\label{Eulercan1}
    \sum_{\sigma\in\mathfrak{S}_n}(-1)^{\exc(\sigma)}=
    \left\{\begin{array}{ll}
        0,  & \mbox{if $n$ is even,}\\
    (-1)^{\frac{n-1}{2}}E_n, & \mbox{if $n$ is odd.}
    \end{array}
    \right.
\end{equation}
\end{thm}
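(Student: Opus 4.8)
The plan is to evaluate the exponential generating function $\sum_{n\ge 0}\bigl(\sum_{\sigma\in\mathfrak{S}_n}(-1)^{\exc(\sigma)}\bigr)\frac{x^n}{n!}$ in closed form and recognize it. First I would recall the standard generating function for the joint distribution of excedances, namely
\[
\sum_{n\ge 0}A_n(y)\frac{x^n}{n!}=\sum_{n\ge 0}\Bigl(\sum_{\sigma\in\mathfrak{S}_n}y^{\exc(\sigma)}\Bigr)\frac{x^n}{n!}=\frac{y-1}{y-e^{(y-1)x}},
\]
which is classical (it follows, for instance, from the transfer-matrix / exponential formula decomposition of a permutation into cycles, or from the theory of Eulerian polynomials). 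Then I would simply substitute $y=-1$. This gives
\[
\sum_{n\ge 0}\Bigl(\sum_{\sigma\in\mathfrak{S}_n}(-1)^{\exc(\sigma)}\Bigr)\frac{x^n}{n!}=\frac{-2}{-1-e^{-2x}}=\frac{2}{1+e^{-2x}}=\frac{2e^{x}}{e^{x}+e^{-x}}=1+\tanh x.
\]
Since $\tanh x$ is an odd function with Taylor coefficients $\frac{\tanh^{(n)}(0)}{n!}$ alternating in sign relative to $\tan x$ — precisely, $\tanh x = \sum_{k\ge 0}(-1)^k E_{2k+1}\frac{x^{2k+1}}{(2k+1)!}$ because $\tan(ix)=i\tanh x$ — comparing coefficients of $x^n/n!$ on both sides yields exactly the claimed formula: the even coefficients ($n\ge 2$) vanish, and the coefficient of $x^n/n!$ for odd $n$ is $(-1)^{(n-1)/2}E_n$.

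The only genuinely substantive point is establishing the generating function $\sum_n A_n(y)x^n/n! = (y-1)/(y-e^{(y-1)x})$, and I would either cite it as standard or include a short derivation: decompose each $\sigma\in\mathfrak{S}_n$ by its cycle structure is awkward for excedances, so instead I would use the more direct route via the "fundamental bijection" / descent correspondence, or simplest of all, verify that $B_n(x):=\sum_\sigma x^{\exc(\sigma)}$ satisfies the recurrence $A_{n}(y)=\sum_{k}\binom{n-1}{k}A_k(y)\,y^{?}\cdots$ — actually the cleanest is the classical identity that weak excedances are equidistributed with descents, so $A_n(y)$ is the usual descent-Eulerian polynomial with the well-known EGF $\sum_n A_n(y)x^n/n! = (y-1)/(y-e^{(y-1)x})$ after the shift accounting for $\wex = \exc+\text{(fixed points)}$; I would be careful here because the paper's normalization uses $\exc$, not $\wex$, and the two EGFs differ by a factor, so I must track the exact substitution.

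An alternative, more combinatorial approach — which is likely closer in spirit to what the paper develops later with crossings — is to use the continued-fraction expansion of the ordinary generating function $\sum_n \bigl(\sum_\sigma t^{\fix(\sigma)}y^{\exc(\sigma)}\bigr)x^n$ via the Françon–Viennot or Foata–Zeilberger bijection onto weighted Motzkin (Laguerre) paths, then set $y=-1$ and observe massive cancellation leaving a $J$-fraction whose coefficients are those of $\tan$ and $\sec$. The main obstacle in either route is purely bookkeeping: making sure the power of $y$, the role of fixed points, and the sign $(-1)^{(n-1)/2}$ all line up, and correctly matching $\tanh$'s Taylor coefficients to the Euler numbers $E_n$ defined in the paper. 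I expect no conceptual difficulty beyond that; the result is classical and the $q$-analogue in the next section is where the real work begins.
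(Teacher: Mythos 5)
Your proposal is correct, and the computation checks out: with $\sum_{n\ge 0}\bigl(\sum_{\sigma\in\mathfrak{S}_n}y^{\exc(\sigma)}\bigr)\frac{x^n}{n!}=\frac{y-1}{y-e^{(y-1)x}}$ (the classical Eulerian EGF in the $\exc$-normalization, i.e.\ without the extra factor of $y$ that the paper's $\wex$-based $A_n(y)$ carries), the substitution $y=-1$ gives $\frac{2}{1+e^{-2x}}=1+\tanh x$, and since $\tan(ix)=i\tanh x$ the odd coefficients are $(-1)^{(n-1)/2}E_n$ while the even ones ($n\ge 2$) vanish. Note, however, that the paper offers no proof of this statement at all: it is quoted as classical background (attributed to Euler and to Foata--Sch\"utzenberger) and serves only as motivation, so there is no internal argument to compare against. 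Your generating-function route is the standard textbook derivation and is entirely adequate; your alternative sketch via weighted Motzkin paths and cancellation in the $J$-fraction is indeed the method the paper actually deploys, but only later and for the genuinely new results (the $q$-analogue of Josuat-Verg\`es and the type $B$/$D$ extensions in Theorems \ref{thm:signFwexB} and \ref{thm:signFwexB*}), where a sign-reversing involution on the paths isolates a fixed-point set weighted by $R_{n-1}(t,q)$ or $Q_n(t,q)$. Your only wobble is the mid-proof hesitation over $\exc$ versus $\wex$; the formula you wrote is the correct one for $\exc$, so the worry resolves itself, but in a final write-up you should state the normalization once and not revisit it. (A pedantic boundary remark: at $n=0$ the sum equals $1$, not $0$, so the identity should be read for $n\ge 1$ --- an issue with the paper's statement, not with your argument.)
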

This identity was first discovered by Euler\cite{Euler} in a different form when he introduced Eulerian polynomials, the presenting form of the identity was obtained by Foata and Sch\"{u}tzenberger\cite{Foata-Schu}. Interestingly, the other half of the result shows up while we restrict our attention on the derangements in $\mathfrak{S}_n$.
\begin{thm}[Roselle\cite{Roselle1968}]
\begin{equation}\label{Eulercan2}
    \sum_{\sigma\in\mathfrak{S}_n^{*}}(-1)^{\exc(\sigma)}=
    \left\{
            \begin{array}{ll}
        (-1)^{\frac{n}{2}}E_n,  & \mbox{if $n$ is even,}\\
        0, & \mbox{if $n$ is odd.}
            \end{array}
    \right.
\end{equation}
\end{thm}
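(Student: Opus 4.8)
The plan is to derive Roselle's identity \eqref{Eulercan2} from Euler's identity \eqref{Eulercan1} using the elementary fact that a fixed point of a permutation is never an excedance, so the excedance statistic is "carried" entirely by the derangement obtained after deleting the fixed points. First I would rewrite \eqref{Eulercan1} as an exponential generating function. Setting $a_n:=\sum_{\sigma\in\mathfrak{S}_n}(-1)^{\exc(\sigma)}$, identity \eqref{Eulercan1} together with the fact that $\sum_{k\ge 0}E_{2k+1}\frac{x^{2k+1}}{(2k+1)!}=\tan x$ (the odd part of the definition of $E_n$) gives
\[
\sum_{n\ge 0}a_n\frac{x^n}{n!}=1+\sum_{k\ge 0}(-1)^kE_{2k+1}\frac{x^{2k+1}}{(2k+1)!}=1+\tanh x,
\]
where the initial $1$ is the $n=0$ term and the sign change converts $\tan$ into $\tanh$ via $x\mapsto ix$.

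Next I would set up the fixed-point decomposition. Every $\sigma\in\mathfrak{S}_n$ is specified uniquely by its set $S\subseteq[n]$ of non-fixed points together with the derangement it induces on $S$, and since fixed points contribute nothing to $\exc$, the value $\exc(\sigma)$ equals the number of excedances of that induced derangement (computed in the order-isomorphism $S\cong[\,|S|\,]$). Summing $(-1)^{\exc}$ over $\mathfrak{S}_n$ and grouping by $|S|=k$ yields
\[
a_n=\sum_{k}\binom{n}{k}d_k,\qquad\text{where }d_k:=\sum_{\tau\in\mathfrak{S}_k^{*}}(-1)^{\exc(\tau)},
\]
with $d_0=1$ and $d_1=0$ (consistent with $\mathfrak{S}_1^{*}=\varnothing$). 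In generating-function form this is precisely $1+\tanh x=e^{x}\,D(x)$ where $D(x):=\sum_{n\ge 0}d_n\frac{x^n}{n!}$, so, using $1+\tanh x=\dfrac{\cosh x+\sinh x}{\cosh x}=\dfrac{e^{x}}{\cosh x}$, I get
\[
D(x)=e^{-x}\cdot\frac{e^{x}}{\cosh x}=\frac{1}{\cosh x}=\operatorname{sech}x.
\]

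Finally I would read off the coefficients: from $\sec x=\sum_{k\ge 0}E_{2k}\frac{x^{2k}}{(2k)!}$ (the even part of the definition of $E_n$) the substitution $x\mapsto ix$ gives $\operatorname{sech}x=\sum_{k\ge 0}(-1)^kE_{2k}\frac{x^{2k}}{(2k)!}$, so $d_n=0$ for odd $n$ and $d_{2k}=(-1)^kE_{2k}$, which is exactly \eqref{Eulercan2}. The calculation is essentially routine; the only points demanding care are the bookkeeping in the fixed-point decomposition — in particular checking that the $n=0,1$ cases are handled so that $a_n=\sum_k\binom nk d_k$ holds uniformly — and recognizing the closed form $e^{-x}(1+\tanh x)=\operatorname{sech}x$. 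If instead one wanted a fully self-contained argument avoiding \eqref{Eulercan1}, one could attempt a sign-reversing involution on $\mathfrak{S}_n^{*}$ directly; producing an involution whose surviving fixed points biject naturally with the objects counted by $E_n$ is the step I would expect to be genuinely delicate, which is why I would favor the generating-function route above.
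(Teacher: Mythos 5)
Your proposal is correct. Note that the paper does not actually prove this theorem: it states it as a classical result, citing Roselle, and remarks only that Roselle obtained it with a slightly different combinatorial interpretation. So there is no in-paper argument to compare against; what you have supplied is a genuine proof where the paper has none. Your route --- deduce (\ref{Eulercan2}) from (\ref{Eulercan1}) by the exponential formula for the fixed-point decomposition --- is sound: the key observation that $\exc(\sigma)$ is inherited by the induced derangement on the non-fixed points under the order isomorphism $S\cong[\,|S|\,]$ is easily verified (if $S=\{s_1<\cdots<s_k\}$ and $\sigma(s_i)=s_{\tau(i)}$, then $\sigma(s_i)>s_i$ iff $\tau(i)>i$), the convolution $a_n=\sum_k\binom{n}{k}d_k$ holds uniformly including $n=0,1$, and the identities $1+\tanh x=e^x/\cosh x$ and $\operatorname{sech}x=\sum_k(-1)^kE_{2k}x^{2k}/(2k)!$ give exactly the claimed evaluation. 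This derivation also fits the paper's later theme well: the $q$-analogue (\ref{JV2}) for derangements is likewise a companion to (\ref{JV1}) for all permutations, though at the $q$-level the passage between the two is no longer a simple $e^{-x}$ twist and the paper instead works through continued fractions and weighted Motzkin paths. The one thing your argument presupposes is (\ref{Eulercan1}) itself, which the paper also only cites; a fully self-contained treatment would still need a proof of that input (or, as you note, a direct sign-reversing involution on $\mathfrak{S}_n^{*}$, which is the genuinely delicate alternative).
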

\noindent This was first obtained by Roselle\cite{Roselle1968} using a slightly different combinatorial interpretation.

\section{$q$-analogue of the signed counting identities}
As we can see in $(\ref{Eulercan1}),(\ref{Eulercan2})$, both sides of the identities occur in $\mathfrak{S}_n$, it is natural to seek $q$-analogues of $(\ref{Eulercan1}),(\ref{Eulercan2})$.
In fact there are three different q-analogues that have been discovered by Foata and Han \cite{FH}, Josuat-Verg\`{e}s\cite{JVerges2010}, Shin and Zeng \cite{Shin2010} respectively. In this section we will introduce the one obtained by Josuat-Verg\`{e}s.\\

To begin with, we need to introduce the corresponding $q$-analogous of Eulerian polynomials and Euler numbers.
\begin{defi}
A \emph{crossing} of a permutation $\sigma=\sigma_1\sigma_2\ldots\sigma_n$ is a pair of $(i,j)$ $(1\le i<j\le n)$ such that $i<j\le\sigma_i<\sigma_j$ or $\sigma_i<\sigma_j<i<j$. We denote by $\cro(\sigma)$ the number of crossings in $\sigma$.
\end{defi}
Crossings of a permutation can be visualized via permutation diagram, see Figure \ref{diagram1}. Let $\sigma=6453172$ then the crossing of $\sigma$ are $(2,3),(1,6), (5,7)$, hence $\cro(\sigma)=3$.\\
\begin{figure}[t!]
\begin{center}
	\begin{tikzpicture}[scale=.4]
		\node [fill,circle,inner sep=1.5pt] (b1) at (0,0){};
		\node [fill,circle,inner sep=1.5pt] (b2) at (2,0){};
		\node [fill,circle,inner sep=1.5pt] (b3) at (4,0){};
		\node [fill,circle,inner sep=1.5pt] (b4) at (6,0){};
		\node [fill,circle,inner sep=1.5pt] (b5) at (8,0){};
		\node [fill,circle,inner sep=1.5pt] (b6) at (10,0){};
		\node [fill,circle,inner sep=1.5pt] (b7) at (12,0){};
		
	\path 
		(b1) edge [->,out=45, in=135] node [pos=.5,above]  {} (b6)
		(b2) edge [->,out=45, in=135] node [pos=.5,above]{} (b4)
		(b3) edge [->,out=45,in=135] node [pos=.5,above]{} (b5)
		(b4) edge [->,below, out=225, in=315] node [pos=.5,below] {} (b3)
		(b5) edge [->,below, out=225,in=315] node [pos=.5,below] {} (b1)
		(b6) edge [->,out=70,in=110] node [pos=.5,above] {} (b7)
		(b7) edge [->,below, out=225,in=315] node [pos=.5,below] {} (b2);
		
		\node at (0,-0.75)  {\tiny $1$};
		\node at (2,-0.75)  {\tiny $2$};
		\node at (4,-0.75)  {\tiny $3$};
		\node at (6,-0.75)  {\tiny $4$};
		\node at (8,-0.75)  {\tiny $5$};
		\node at (10,-0.75)  {\tiny $6$};
		\node at (12,-0.75)  {\tiny $7$};
	\end{tikzpicture}
\end{center}
\caption{The diagram of $\sigma=6453172$}
\label{diagram1}
\end{figure}
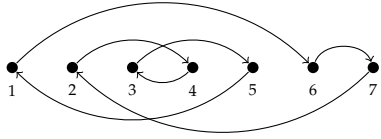

Then we have a $q$-analogue of Eulerian numbers
\begin{equation}\label{q-A}
	A_{n,k}(q)=\sum_{\substack{\sigma\in\mathfrak{S}_n\\ 			\wex(\sigma)=k}}q^{\cro(\sigma)}
\end{equation}
and the corresponding $q$-Eulerian polynomials
\[
	A(y,q)=\sum_{k=1}^nA_{n,k}(q)y^k=\sum_{\sigma\in\mathfrak{S}_n}y^{\wex(\sigma)}q^{\cro(\sigma)}.
\]
The notion of crossings of a permutation was first introduced by Williams \cite{WilliamsGrs} along with another notion called \emph{alignments} in the study of totally positivity Grassmann cells. In \cite{WilliamsGrs}, Williams also define $A_{n,k}(q)$  though in terms of alignments. But a simple relation between the number of aligenments and the number of crossings shown later by Corteel \cite{Corteel-cross} gives the equivalent definition in (\ref{q-A}).

The following $q$-analogue of Euler number was introduced by Han, Randrianarivony, Zeng \cite{Han-Zeng}.
\begin{defi}
The \emph{$q$-tangent numbers} $E_{2n+1}(q)$ are defined by 
\begin{equation} \label{eqn:q-tangent}
	\sum_{n=0}^{\infty}E_{2n+1}(q)z^n=
	\frac{1}{1-\cfrac{[1]_q[2]_qz}{1-\cfrac{[2]_q[3]_qz}{1-\cfrac{[3]_q[4]_qz}{\ddots}}}}
\end{equation}
and the \emph{$q$-secant numbers} $E_{2n}(q)$ are defined by
\begin{equation}\label{eqn:q-secant}
	\sum_{n=0}^{\infty}E_{2n}(q)z^n=
	\frac{1}{1-\cfrac{[1]_q^2z}{1-\cfrac{[2]_q^2z}{1-\cfrac{[3]_q^2z}{\ddots}}}}
\end{equation}
\end{defi}
The first few polynomials are $E_0(q)=E_1(q)=E_2(q)=1,$ $E_3(q)=1+q$,  $E_4(q)=2+2q+q^2$, $E_5(q)=2+5q+5q^2+3q^3+q^4$.

\noindent The polynomial $E_n(q)$ has a combinatorial interpretation \cite{Chebikin,JVerges2010}:
\[
	E_n(q)=\sum_{\sigma\in\Alt_n}q^{\cab(\sigma)}
\]
where $\Alt_n$ is the set of alternating permutations of length $n$ and $\cab(\sigma)=\#\{(i,j):i+1<j,\sigma_{i+1}<\sigma_j<\sigma_i\}$.

Using the above $E_n(q)$ and the number of crossings $\cro$, Josuate-Verg\`{e}s \cite{JVerges2010} derived $q$-analogs of Eqs (\ref{Eulercan1}) and (\ref{Eulercan2}).
\begin{thm}[Josuate-Verg\`{e}s \cite{JVerges2010}]
\begin{equation}\label{JV1} 
	 \sum_{\pi\in\mathfrak{S}_n}(-1)^{\wex(\pi)}q^{\cro(\pi)}
	=\left\{\begin{array}{ll}
			0  						& \mbox{if }n\mbox{ is even,}\\			     (-1)^{\frac{n+1}{2}}E_n(q) & \mbox{if }n\mbox{ is odd;} 
   	\end{array}\right.	\\
\end{equation}
and
\begin{equation} \label{JV2}
	 \sum_{\pi\in\mathfrak{S}_n^*}(-\frac{1}{q})^{\wex(\pi)}q^{\cro(\pi)}=\left\{\begin{array}{ll}
			(-\frac{1}{q})^{\frac{n}{2}}E_n(q)  & \mbox{if }n\mbox{ is even,}\\
			0 & \mbox{if }n\mbox{ is odd.} 		
	\end{array}\right. 
\end{equation}
\end{thm}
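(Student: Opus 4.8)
The plan is to translate both identities into equalities of continued fractions and then recognize the right-hand sides inside the defining fractions $(\ref{eqn:q-tangent})$ and $(\ref{eqn:q-secant})$. Put
\[
S(y,q,z)=\sum_{n\ge 0}z^{n}\sum_{\sigma\in\mathfrak{S}_{n}}y^{\wex(\sigma)}q^{\cro(\sigma)},\qquad
D(y,q,z)=\sum_{n\ge 0}z^{n}\sum_{\sigma\in\mathfrak{S}_{n}^{*}}y^{\wex(\sigma)}q^{\cro(\sigma)} .
\]
As is known (via the Foata--Zeilberger encoding of permutations by weighted bicolored Motzkin paths, with the crossing statistic identified as in Corteel \cite{Corteel-cross}; see also \cite{JVerges2010}), $S(y,q,z)$ is the Stieltjes continued fraction whose partial numerators are $y[1]_{q}z,\ [1]_{q}z,\ y[2]_{q}z,\ [2]_{q}z,\dots$, equivalently the Jacobi continued fraction with $b_{0}=y$, $b_{h}=[h]_{q}+y[h+1]_{q}$ and $\lambda_{h}=y[h]_{q}^{2}$. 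The derangement fraction then follows with no extra bijection: every permutation is uniquely a derangement with a set of fixed points inserted, and a fixed point is a weak excedance that creates no crossing, so $S(y,q,z)=\frac{1}{1-yz}D\!\bigl(y,q,\tfrac{z}{1-yz}\bigr)$, and a direct computation shows that this relation transforms the Jacobi fraction of $S$ into that of $D$ by keeping each $\lambda_{h}$ and replacing each $b_{h}$ by $b_{h}-y=[h]_{q}(1+yq)$ --- in particular $b_{0}\mapsto 0$, as the absence of fixed points demands.

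Granting this, identity $(\ref{JV2})$ is almost immediate: substituting $y=-1/q$ annihilates every linear coefficient $[h]_{q}(1+yq)$, so
\[
D\!\bigl(-\tfrac1q,q,z\bigr)=\cfrac{1}{1+\cfrac{\tfrac1q[1]_{q}^{2}z^{2}}{1+\cfrac{\tfrac1q[2]_{q}^{2}z^{2}}{\ddots}}},
\]
which is exactly the continued fraction $(\ref{eqn:q-secant})$ with $z$ replaced by $-z^{2}/q$. Extracting the coefficient of $z^{n}$ gives $(-1/q)^{n/2}E_{n}(q)$ for even $n$ and $0$ for odd $n$, i.e.\ $(\ref{JV2})$.

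For $(\ref{JV1})$ I substitute $y=-1$ in the Stieltjes fraction of $S$, obtaining the sign-alternating fraction with partial numerators $-[1]_{q}z,\ [1]_{q}z,\ -[2]_{q}z,\ [2]_{q}z,\dots$. Writing $c_{k}=[k]_{q}$ and letting $X_{1},X_{2},\dots$ be its successive tails (so $X_{2k-1}=(1-c_{k}zX_{2k})^{-1}$ and $X_{2k}=(1+c_{k+1}zX_{2k+1})^{-1}$), the repeated use of
\[
\frac{1}{X_{2k-1}}+c_{k}z=1+c_{k}z(1-X_{2k})=1+\frac{c_{k}c_{k+1}z^{2}}{\dfrac{1}{X_{2k+1}}+c_{k+1}z}
\]
telescopes the fraction into
\[
\frac{z}{1-S(-1,q,z)}=1+\cfrac{[1]_{q}[2]_{q}z^{2}}{1+\cfrac{[2]_{q}[3]_{q}z^{2}}{\ddots}},
\]
whose right-hand side is the reciprocal of $(\ref{eqn:q-tangent})$ with $z$ replaced by $-z^{2}$. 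Hence $S(-1,q,z)=1-z\sum_{n\ge 0}E_{2n+1}(q)(-z^{2})^{n}=1-\sum_{n\ge 0}(-1)^{n}E_{2n+1}(q)z^{2n+1}$, and reading off $[z^{n}]$ yields $0$ for even $n$ and $(-1)^{(n+1)/2}E_{n}(q)$ for odd $n$, which is $(\ref{JV1})$.

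The continued-fraction manipulations in the second and third paragraphs are routine; the substantive part, and where I expect the main difficulty, is the first paragraph --- namely producing (or citing in exactly the right form) the weight-preserving bijection between permutations and Laguerre histories that simultaneously realizes $\wex$ and $\cro$ as the Jacobi data $(b_{h},\lambda_{h})$, i.e.\ making sure it is $\cro$ and not some neighbouring ``crossing-like'' statistic that the heights record. One also has to be careful with the derangement specialization, resisting the temptation to merely delete the $b_{0}z$ term: a check against the permutations and derangements of length at most $4$ confirms both the weights $b_{h},\lambda_{h}$ and the shift $b_{h}\mapsto b_{h}-y$. Once the two continued fractions are in hand, both halves of the theorem are just specializations of the classical $q$-Euler continued fractions $(\ref{eqn:q-tangent})$ and $(\ref{eqn:q-secant})$.
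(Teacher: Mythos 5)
Your argument is correct, but note first that the thesis does not actually prove this theorem: it is quoted from Josuat-Verg\`es \cite{JVerges2010} as motivation for the type B results, so there is no in-paper proof to match against. Your route --- specializing the Jacobi continued fraction for $\sum_n z^n\sum_\sigma y^{\wex(\sigma)}q^{\cro(\sigma)}$ at $y=-1$ (via the odd contraction of the underlying Stieltjes fraction) and at $y=-1/q$ (after the fixed-point removal $S(y,q,z)=\tfrac{1}{1-yz}D(y,q,\tfrac{z}{1-yz})$, which shifts every linear coefficient by $-y$ and so kills all of them for derangements) --- is sound, and the pieces check out: the Jacobi data $b_0=y$, $b_h=[h]_q+y[h+1]_q$, $\lambda_h=y[h]_q^2$ you start from are exactly the $t=0$ case of Theorem \ref{thm:B-enumerator-cf} with $y^2$ renamed $y$; the shift $b_h\mapsto b_h-y=[h]_q(1+yq)$ is right; and the contraction identity $\tfrac{z}{1-S(-1,q,z)}=1+\tfrac{[1]_q[2]_qz^2}{1+\cdots}$ together with the low-order coefficients of both sides of (\ref{JV1}) and (\ref{JV2}) verify correctly against (\ref{eqn:q-tangent}) and (\ref{eqn:q-secant}). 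The one ingredient you rightly flag as substantive --- that $(\wex,\cro)$ is realized by precisely these Jacobi data --- is Corteel's theorem and is the $t=0$ shadow of Theorem \ref{thm:Corteel}, so citing it is legitimate. For contrast, when the thesis proves its own type B and D analogues (Theorems \ref{thm:signFwexB} and \ref{thm:signFwexB*}) it does not specialize continued fractions at the critical value of $y$; it keeps $y$ generic, transfers the enumeration to weighted bicolored Motzkin paths, and builds explicit sign-reversing involutions ($\Psi_1$, $\Psi_2$) whose fixed-point sets realize the weight schemes for $R_{n-1}(t,q)$ and $Q_n(t,q)$. Your method is shorter and purely generating-functional; the involution method costs more setup but yields a combinatorial cancellation and accommodates the extra parameter $t$.
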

\begin{exa}
When $n=4$, from table \ref{n=4} we have
\begin{align*}
		\sum_{\pi\in\mathfrak{S}_n^*}(-\frac{1}{q})^{\wex(\pi)}q^{\cro(\pi)}=&\frac{1}{q^2}-\frac{1}{q}+\frac{1}{q}+\frac{1}{q}+1+\frac{1}{q}+\frac{1}{q}-\frac{1}{q}+\frac{1}{q}+\frac{1}{q^2} \\
		=&\frac{2}{q^2}+\frac{2}{q}+1\\
		=&(-\frac{1}{q})^2(2+2q+q^2)\\
		=&(-\frac{1}{q})^2E_4(q)
		\end{align*}
\end{exa}

\begin{table}[htb]
		\centering
		\begin{tabular}[t]{c|c c}
		  $\mathfrak{S}_4^*$ &  $\wex$ & $\cro$\\
		  \hline
		  2143	&    2  &   0 \\
		  2341	&    3  &   2 \\
		  2413	&    2  &   1 \\
		  3142	&    2  &   1 \\
		  3412	&    2  &   2 \\
		  4123	&    1  &   0 \\
		  4132	&    2  &   1 \\
		  4312	&    2  &   1 \\
		  4321	&    2  &   0 
		\end{tabular}
		\begin{tabular}[t]{c|c}
		  $\Alt_4$ &  $\cab$\\
		  \hline
		  2143	&  0 \\
		  3142	&  1 \\
		  3241	&  0 \\
		  4132  &  2 \\
		  4231  &  1
		\end{tabular}
		\caption{}
		\label{n=4}
\end{table}

Note that the symmetric group $\mathfrak{S}_n$ is just the finite irreducible Coxeter group of type A. In type B and type D, there are combinatorial models similar to permutations. Fortunately, the notions we have mentioned, for instance $\wex$, $\cro$, also have type B analogues. One of our purpose in this work is to extend the results of (\ref{Eulercan1}),(\ref{Eulercan2}) to type B and D.

\chapter{Signed Permutations and Snakes}
  In this chapter we introduce the type B and D analogues of several notions including signed and even signed permutations, flag weak excedence, crossings of type B, and the corresponding enumerators. Then we briefly describe the type-free analogue of Euler number-Springer numbers. We focus on the Springer number of type B. The combinatorial model of $S_n$, which is called the snakes of type B, are introduced along with other types of snakes. At last we present the connection between snakes and the derivative polynomials of $\tan$ and $\sec$.\\

\section{Signed Permutations}

The type B and type D analogs of permutations are signed and even sigend permutations respectively. 
\begin{defi}\noindent
\begin{itemize}
\item[(i)] A \emph{signed permutation} of $[n]$ is a bijection $\sigma$ of the set $[\pm n]:=\{-n,-n+1,\dots,-1,1,2,\dots,n\}$ onto itself such that $\sigma(-i)=-\sigma(i)$ for all $i\in[\pm n]$. For convenience, we write $-i$ as $\bar{i}$. Sometimes we denote $\sigma$ as $\sigma_1,\sigma_2,\ldots,\sigma_n$, which is called the \emph{window notation} of $\sigma$, where $\sigma_i=\sigma(i)$ for $1\le i \le n$.
\item[(ii)] An \emph{even signed permutation} is a signed permutation with even number of negative entries in its window notation.
\end{itemize}
\end{defi}
Denote $B_n$ and $D_n$ the set of signed permutations and even signed permutations of $[n]$, and $B_n^*$ ($D_n^*$ respectively) the subset of $B_n$ ($D_n$ respectively) without fixed points.\\
For example, $B_2=\{12,\bar{1}2,1\bar{2},\bar{1}\bar{2},21,2\bar{1},\bar{2}1,\bar{2}\bar{1}\}$, $D_2=\{12,\bar{1}\bar{2},21,\bar{2}\bar{1}\}$, $B_2^*=\{\bar{1}\bar{2}, 21,2\bar{1},\bar{2}1,\bar{2}\bar{1}\}$, $D_2^*=\{\bar{1}\bar{2}, 21,\bar{2}\bar{1}\}$. 
 
The type B analogous of weak excedance we need is the \emph{flag weak excedance} of signed permutations, which is defined as following. 
\begin{defi}[Flag weak excedance]\noindent
For $\sigma\in B_n$, we define $\wex(\sigma)=\#\{i\in [n]: \sigma_i\ge i\}$ and $\nega(\sigma)=\#\{\sigma_i: i\in[n], \sigma_i<0\}$.  Then the \emph{flag weak excedance number} is defined as
\[
	\fwex(\sigma)=2\wex(\sigma)+\nega(\sigma).
\]
\end{defi}

\section{Crossing of type B}
Notion of crossings of signed permutations were given by Corteel, Josuat-Verg\`{e}s and Williams in \cite{CJW} and was studied further in the later work of Corteel, Josuat-Verg\`{e}s and Kim \cite{CJK}. The defintion is defined as following.
\begin{defi}[Crossings of type B]
For $\sigma=\sigma_1\sigma_2\cdots\sigma_n\in B_n$, a \emph{crossing} of $\sigma$ is a pair $(i,j)$  with $i,j\ge 1$ such that
\begin{itemize}
  \item $i<j\le \sigma_i< \sigma_j$ or
  \item $-i<j\le -\sigma_i < \sigma_j$ or
  \item $i>j>\sigma_i>\sigma_j$.
\end{itemize}
\end{defi}

Similar to the case in type A, we may represent a signed permutation in $B_n$ by diagram which makes the number of crossings easilier to count. Corteel et al. \cite{CJK} offer several ways to do it. One of them is through the \emph{full pignose diagram}, see Figure \ref{FullPigNose} for an example of $\sigma=6\bar{3}\bar{5}14\bar{7}\bar{2}$.\\

\noindent\textbf{Construct the diagram:} We assign two vertices for each $i\in[\pm n]$ and arrange the vertices in a line as in the figure.
\begin{itemize}
		\item For each $i>0$ if $\sigma_i>0$ ($\sigma_i<0$, resp.), then we connect the first vertex of $i$ to the second vertex of $\sigma_i$ (first vertex of $\sigma_i$, resp.) with an arc in the following way: draw the arc above the horizontal line if $i\le\sigma_i$, and below the horizontal line if $i>\sigma_i$.
		\item for each $i<0$ if $\sigma_i<0$ ($\sigma_i>0$, resp.), then we connect the second vertex of $i$ to the first vertex of $\sigma_i$ (second vertex of $\sigma_i$, resp.) with an arc in the following way: draw the arc below the horizontal line if $i\ge\sigma_i$, and above the horizontal line if $i<\sigma_i$.
\end{itemize} 
  We can see that the configuration of upper arcs and that of lower arcs are symmetric, and the number of crossings of $\sigma$ is exactly the number of crossings between the arcs above the horizontal line. 
\begin{exa}  
Let $\sigma=6\bar{3}\bar{5}14\bar{7}\bar{2}$, then the crossings are $(7,1),(3,1),(2,1)$ ($-i<j\le -\sigma_i < \sigma_j$) and $(4,2)$,$(4,3)$,$(7,2)$,$(7,3)$,$(7,6)$ ($i>j>\sigma_i>\sigma_j$) so $\croB(\sigma)=8$, see figure \ref{FullPigNose} again.

\begin{figure}[ht]
\centering
\begin{tikzpicture}[scale=.6]
		\footnotesize
1		\foreach \x in {1,2,3,4,5,6,7}{
			\node[shape=circle,inner sep=1pt,draw,fill] (A\x) at (\x-.15,0) {};
			\node[shape=circle,inner sep=1pt,draw,fill] (B\x) at (\x+.15,0) {};
			\node[shape=circle,inner sep=1pt,draw,fill] (-A\x) at (-\x+.15,0) {};
			\node[shape=circle,inner sep=1pt,draw,fill] (-B\x) at (-\x-.15,0) {};
			\draw[gray] (\x,0) ellipse (.3 and .2);
			\draw[gray] (-\x,0) ellipse (.3 and .2);
			\draw (\x,-.4) node{$\x$};
			\draw (-\x,-.4) node{$-\x$};}
	
		\draw[thick] (A1) to [out=90,in=90] (B6);
		\draw[thick] (-A1) to [out=-90,in=-90] (-B6);
		\draw[thick] (A2) to [out=-90,in=-90] (-B3);
		\draw[thick] (-A2) to [out=90,in=90] (B3);
		\draw[thick] (A3) to [out=-90,in=-90] (-B5);
		\draw[thick] (-A3) to [out=90,in=90] (B5);
		\draw[thick] (A4) to [out=-90,in=-90] (B1);
		\draw[thick] (-A4) to [out=90,in=90] (-B1);
		\draw[thick] (A5) to [out=-90,in=-90] (B4);
		\draw[thick] (-A5) to [out=90,in=90] (-B4);
		\draw[thick] (A6) to [out=-90,in=-90] (-B7);
		\draw[thick] (-A6) to [out=90,in=90] (B7);
		\draw[thick] (A7) to [out=-90,in=-90] (-B2);
		\draw[thick] (-A7) to [out=90,in=90] (B2);
	\end{tikzpicture}
	\caption{Full pignose diagram for $\sigma=6\bar{3}\bar{5}14\bar{7}\bar{2}$}
	\label{FullPigNose}	
\end{figure}
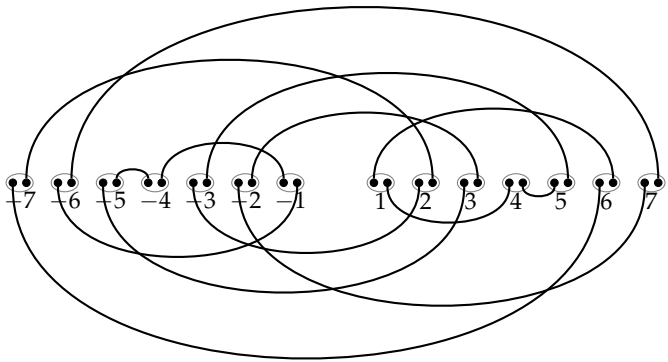
\end{exa}

\section{Refined Enumeration on Singed Permutations}

Let $B_n(y,t,q)=\sum_{\sigma\in B_n}y^{\fwex(\sigma)}t^{\nega(\sigma)}q^{\croB(\sigma)}$. The first few values are 
\begin{align*}
B_0(y,t,q)=& 1,\\
B_1(y,t,q)=& y^2+yt,\\ 
B_2(y,t,q)=& y^4+(2t+tq)y^3+(t^2q+t^2+1)y^2+ty.
\end{align*}
In particular, when we take $t=0$, we have $B_n(y,0,q)=A_n(y,q)$.

Corteel, Josuat-Verg\`{e}s and  Williams \cite{CJW} showed that $B_n(y,t,q)$ is also a generating function of permutation tableaux of type B. Using permutation tableaux, Corteel et al.\cite{CJW} proved that $B_n(y,t,q)$ satisfied a \emph{Matrix Ansatz}. (They only consider $B_n(1,t,q)$, but their proofs work for $B_n(y,t,q)$.)
\begin{thm}[\cite{CJW}]
Let $D$ and $E$ be matrices, $\langle W|$ a row vector, $|V\rangle$ a column vector, satisfying
\[
	D|V\rangle=|V\rangle, \qquad \langle W|E=yt\langle W|D, \qquad DE=qED+D+E.
\]
Then 
\[
	B_n(y,t,q)=\langle W|(y^2D+E)|V\rangle.
\]
\end{thm}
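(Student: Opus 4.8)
The plan is to follow the argument of Corteel, Josuat-Verg\`es and Williams for $B_n(1,t,q)$, merely carrying along the extra variable $y$. Concretely I would split the proof into two independent halves and then glue them: a purely combinatorial recursion for $B_n(y,t,q)$, and an algebraic evaluation of $\langle W|(y^2D+E)^n|V\rangle$; one then checks that both sequences satisfy the same recursion with the same initial value $B_0(y,t,q)=1=\langle W|V\rangle$ (here $\langle W|V\rangle=1$ is understood, as is implicit in such Matrix Ansatz statements), so that they coincide by induction on $n$.

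For the combinatorial half, recall that $B_n(y,t,q)$ is the generating polynomial of type $B$ permutation tableaux associated with boundary data of size $n$, and that the bijection of \cite{CJW} between these tableaux and $B_n$ carries $\wex$, $\nega$ and $\croB$ to the relevant tableau statistics; one needs to observe this bijection is weight-preserving for the refined weight $y^{\fwex}t^{\nega}q^{\croB}=(y^2)^{\wex}(yt)^{\nega}q^{\croB}$, since $\fwex=2\wex+\nega$. Such a tableau is built recursively by reading its boundary path one step at a time: each step is a ``row'' step or a ``column'' step, and adjoining a step amounts to adjoining a new row or column whose admissible fillings contribute a factor that is $q$, $1$, $y^2$ or $yt$ according to the type of the step and to how the new filling interacts with the cells already present. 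Collecting these contributions produces an ``add one letter'' recursion in which the two growth mechanisms correspond exactly to the operators $y^2D$ and $E$ of the statement, while the special behaviour at the distinguished end of the path is encoded by $D|V\rangle=|V\rangle$ and $\langle W|E=yt\langle W|D$. (Alternatively one can phrase this as a direct insertion of $\pm n$ into a signed permutation of $[n-1]$, avoiding tableaux, at the cost of a messier description of how $\croB$ changes.)

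For the algebraic half, normal-order $(y^2D+E)^n$: expanding gives a sum over length-$n$ words in $D$ and $E$, and repeated use of $DE=qED+D+E$ rewrites each word as a $\mathbb{Z}[q]$-combination of the ordered monomials $E^aD^b$ with $a+b\le n$, the coefficients recording precisely which of the three terms of $DE=qED+D+E$ was selected at each reordering. Then $D|V\rangle=|V\rangle$ collapses $E^aD^b|V\rangle$ to $E^a|V\rangle$, and $\langle W|E=yt\langle W|D$ together with $\langle W|V\rangle=1$ evaluates $\langle W|E^a|V\rangle$ inductively; tracing this through, $\langle W|(y^2D+E)^n|V\rangle$ becomes a weighted sum over exactly the configurations (sequences of row and column steps, i.e.\ bicolored Motzkin-type paths) that index the tableau recursion, with matching weights $q$, $1$, $y^2$, $yt$. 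This is the step that makes the recursion on the matrix side manifest.

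The step I expect to be the main obstacle is the weight-preservation bookkeeping on the combinatorial side: one must verify that when the new largest letter is inserted with either sign, the type $B$ crossing number $\croB$ --- whose definition splits into the three cases $i<j\le\sigma_i<\sigma_j$, $-i<j\le-\sigma_i<\sigma_j$ and $i>j>\sigma_i>\sigma_j$ --- changes by exactly the exponent of $q$ produced by the $qED$ term, and that creating a weak excedance (weight $y^2$, handled by $y^2D$) and creating a negative letter (weight $yt$, handled by $\langle W|E=yt\langle W|D$) are compatible; equivalently, one must upgrade the statistic dictionary of \cite{CJW} from the $(t,q)$ to the $(y,t,q)$ setting. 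The normal-ordering computation, once the correct indexing set is identified, is routine.
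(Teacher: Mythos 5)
The first thing to say is that the paper does not actually prove this statement: it is quoted verbatim from \cite{CJW}, with only the remark that Corteel, Josuat-Verg\`es and Williams prove it for $B_n(1,t,q)$ via permutation tableaux and that ``their proofs work for $B_n(y,t,q)$.'' So there is no in-paper argument to measure you against; the relevant comparison is with the strategy the paper attributes to \cite{CJW}. Your architecture is exactly that strategy --- a weight-preserving correspondence between signed permutations (or type B permutation tableaux) and words in $D,E$, an ``add one letter'' recursion on the combinatorial side, and normal-ordering of $(y^2D+E)^n$ via $DE=qED+D+E$ on the algebraic side, glued by matching recursions and the initial value $\langle W|V\rangle=1$. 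You also silently repair a typo in the displayed statement, which should read $\langle W|(y^2D+E)^n|V\rangle$ rather than $\langle W|(y^2D+E)|V\rangle$; your version with the $n$-th power is the correct one.

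That said, as a proof your proposal is a plan rather than an argument: the step you yourself flag as ``the main obstacle'' --- verifying that inserting the new largest letter with either sign changes $\croB$ by exactly the exponent of $q$ produced by the $qED$ term, and that the $y^2$ and $yt$ weights attach to the right insertion types --- is the entire content of the theorem, and it is deferred, not carried out. Everything else (the normal-ordering computation, the collapse of $E^aD^b|V\rangle$ using $D|V\rangle=|V\rangle$ and $\langle W|E=yt\langle W|D$) is routine once that dictionary is established. Since the paper leans on \cite{CJW} for precisely this bookkeeping, your proposal reproduces the intended proof outline faithfully but does not constitute an independent verification; to make it self-contained you would need to actually establish the statistic dictionary in the $(y,t,q)$ refinement, e.g.\ by checking that $\fwex=2\wex+\nega$ factors the weight as $(y^2)^{\wex}(yt)^{\nega}$ compatibly with the two growth mechanisms $y^2D$ and $E$.
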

By finding a solution to this \emph{Matrix Ansatz}, Corteel, Josuat-Verg\`{e}s and Kim  \cite{CJW} obtained the generating fucntion of $B_n(y,t,q)$ in the form of \emph{J-fractions} (Jacobi continued fractions).\\

\begin{defi} \label{def:J-fraction} For any two sequences $\{\mu_h\}_{h\ge 0}$ and $\{\lambda_h\}_{h\ge 1}$, let $\mathfrak{F}(\mu_h,\lambda_h)$ denote the continued fraction
\begin{equation*}
\mathfrak{F}(\mu_h,\lambda_h)=\frac{1}{1-\mu_0 x}{{}\atop{-}}\frac{\lambda_1 x^2}{1-\mu_1 x}{{}\atop{-}}\frac{\lambda_2 x^2}{1-\mu_2 x}{{}\atop{-}}\frac{\lambda_3 x^2}{1-\mu_3x}{{}\atop{-}}\cdots
\end{equation*}
\end{defi}

\begin{thm}\cite{CJK}\label{thm:B-enumerator-cf}
The continued fraction expansion for the generating fucntion of $B_n(y,t,q)$ is
\begin{equation}  \label{eqn:B-enumerator-cf}
  \sum_{n\ge 0} B_n(y,t,q) x^n = \mathfrak{F}(\mu_h,\lambda_h)\end{equation}
where $\mu_h=y^2[h+1]_q+[h]_q+ytq^{h}([h]_q+[h+1]_q)$ for $h\ge 0$ and $\lambda_h=[h]_q^2(y^2+ytq^{h-1})(1+ytq^h)$ for $h\ge 1$
\end{thm}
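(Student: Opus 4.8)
The plan is the one the text points to. By the Matrix Ansatz of the previous theorem, $B_{n}(y,t,q)=\langle W|(y^{2}D+E)^{n}|V\rangle$ for any quadruple $(D,E,\langle W|,|V\rangle)$ with
\[
D|V\rangle=|V\rangle,\qquad \langle W|E=yt\langle W|D,\qquad DE=qED+D+E,\qquad \langle W|V\rangle=1,
\]
and the standard passage from a solved Matrix Ansatz of this kind to a continued fraction --- Flajolet's correspondence between $J$-fractions and weighted Motzkin paths, as used for such algebras by Corteel and Williams --- converts these relations into
\[
\sum_{n\ge0}\langle W|(y^{2}D+E)^{n}|V\rangle\,x^{n}=\mathfrak{F}(\mu_{h},\lambda_{h}),
\]
where, writing $N:=y^{2}D+E$ as a tridiagonal operator in a suitable basis, $\mu_{h}$ is its $h$-th diagonal entry and $\lambda_{h}=(N)_{h-1,h}(N)_{h,h-1}$. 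Thus the theorem reduces to producing one explicit solution of the Matrix Ansatz for which these entries equal the claimed $\mu_{h},\lambda_{h}$; it is convenient to note the equivalent factored forms
\[
\mu_{h}=[h]_{q}(1+ytq^{h})+y\,[h+1]_{q}(y+tq^{h}),\qquad \lambda_{h}=y\,[h]_{q}^{2}\,(y+tq^{h-1})(1+ytq^{h}).
\]

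The substantive step is exhibiting such a solution. I would look for $D,E$ as explicit operators on $\bigoplus_{h\ge0}\mathbb{C}e_{h}$ (equivalently, as shift and $q$-difference operators on polynomials), with the $t=0$ specialisation recovering the known type A solution behind the $J$-fraction for $A_{n}(y,q)=B_{n}(y,0,q)$, and the $t$-dependence installed so that in $N=y^{2}D+E$ a descent out of level $h$ acquires a factor $[h]_{q}(y+tq^{h-1})$ and an ascent into level $h$ a factor $[h]_{q}(1+ytq^{h})$ --- these $t$-pieces being exactly what encodes $\nega$ and the sign-changing crossings counted by $\croB$. The boundary vectors then come from $D|V\rangle=|V\rangle$ and $\langle W|E=yt\langle W|D$ (each solved by a forward recurrence) together with the normalisation $\langle W|V\rangle=1$. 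Note that a purely bidiagonal ansatz cannot work: $DE=qED+D+E$ pins the diagonals of bidiagonal $D,E$ to the form $[h+1]_{q}+C\,q^{h}$ and $[h]_{q}+C'q^{h}$ with $h$-independent $C,C'$, so the $h$-dependent coefficient $ytq^{h}([h]_{q}+[h+1]_{q})$ of $q^{h}$ in $\mu_{h}$ cannot appear; one must let $D$ (or $E$) be tridiagonal, which keeps $N$ tridiagonal while loosening the cubic relation enough to create that $h$-dependence.

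Granting the right operators, the remainder is routine $q$-arithmetic: $(N)_{h-1,h}(N)_{h,h-1}$ collapses to $y[h]_{q}^{2}(y+tq^{h-1})(1+ytq^{h})$, the diagonal entry regroups, via $[h+1]_{q}=[h]_{q}+q^{h}=1+q[h]_{q}$, into $[h]_{q}(1+ytq^{h})+y[h+1]_{q}(y+tq^{h})$, and the remaining scalar consequences of the cubic relation reduce to $q$-integer identities such as $[h]_{q}+q^{h}[h+1]_{q}=[2h+1]_{q}$. Two cheap guardrails keep the search on track: at $t=0$ the formulas must reduce to the type A $J$-fraction, and the first convergents of $\mathfrak{F}(\mu_{h},\lambda_{h})$ must reproduce $B_{0}=1$, $B_{1}=y^{2}+yt$ and $B_{2}=y^{4}+(2t+tq)y^{3}+(t^{2}q+t^{2}+1)y^{2}+ty$, i.e. $\mu_{0}=y^{2}+yt$ and $\mu_{0}^{2}+\lambda_{1}=B_{2}$, both immediate from the formulas.

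The real obstacle is this construction step: guessing the correct $t$-twisted $D,E$ and their boundary vectors so that the cubic relation, both one-sided boundary conditions, and the exact diagonal and off-diagonal data of $N$ all hold at once --- whereas everything upstream (the Matrix Ansatz) and downstream (the Flajolet reduction, the $q$-bookkeeping) is mechanical. An alternative to the whole Matrix-Ansatz route, suggested by the keyword ``weighted bicolored Motzkin paths'', would be to build a direct weight-preserving bijection from $B_{n}$ onto the bicolored Motzkin paths enumerated by $\mathfrak{F}(\mu_{h},\lambda_{h})$, extending the Foata--Zeilberger encoding from type A and arranging that $y^{\fwex}t^{\nega}q^{\croB}$ becomes the path weight; this is combinatorially more transparent but requires careful tracking of the several local step-statistics that jointly produce the factors $1+ytq^{h}$ and $y+tq^{h}$.
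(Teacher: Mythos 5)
A point of reference first: the paper does not prove this theorem. It is imported verbatim from \cite{CJK}, with the surrounding text saying only that Corteel, Josuat-Verg\`es and Kim obtained it ``by finding a solution to this Matrix Ansatz.'' So there is no in-paper argument to match yours against; what can be judged is whether your plan would itself constitute a proof. As written, it would not. You correctly identify the intended route (solve $DE=qED+D+E$, $D|V\rangle=|V\rangle$, $\langle W|E=yt\langle W|D$ with $y^{2}D+E$ tridiagonal in a suitable basis, then read off the $J$-fraction via Flajolet), your factored rewritings of $\mu_h$ and $\lambda_h$ are correct, and your convergent checks $\mu_0=y^{2}+yt$ and $\mu_0^{2}+\lambda_1=B_2(y,t,q)$ do hold. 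But the entire mathematical content of the theorem lives in the one step you explicitly defer: exhibiting operators $D$, $E$ and boundary vectors satisfying all the relations simultaneously \emph{and} having the prescribed tridiagonal data for $y^{2}D+E$. You name this ``the real obstacle'' and then do not overcome it, nor do you give a mechanism guaranteed to produce such a solution. A strategy statement with the existence of the key object left open is not a proof; in particular your structural claim that a purely bidiagonal pair $(D,E)$ cannot work is asserted, not verified, and even if true it only narrows the search without completing it.

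Two further remarks. The ``alternative route'' you mention at the end --- a direct Foata--Zeilberger-type bijection from $B_n$ onto weighted bicolored Motzkin paths whose weight is $y^{\fwex}t^{\nega}q^{\croB}$ --- is not really an alternative to \cite{CJK}: it is also carried out there (their Subsection 7.1), and it is precisely the form of the result that this paper actually uses downstream (Theorem \ref{thm:Corteel} and the weight scheme for $\M_n$). If you want a proof compatible with the rest of this paper's machinery, that bijection is the natural target, but it demands the same kind of detailed local case analysis (which step of the path each $i\in[n]$ becomes, and how the exponents of $t$ and $q$ are distributed over the steps at each height) that you are currently deferring. Finally, note that once the correct weight scheme for $\M_n$ is granted, the continued fraction follows mechanically from Theorem \ref{thm:Flajolet} by summing the admissible weights at each height; so whichever route you take, the irreducible work is the construction, not the $q$-arithmetic.
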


\section{Gerneralized Euler numbers: Springer numbers}

In 1971, Springer \cite{Springer} defined an integer $K(W)$ ,which is usually called Springer number now, for any Coxeter group $W$. He also computed the quantity for all finite irreducible Coxeter systems. In particular, $\mathfrak{S}_n$ is the irreducible Coxeter group of type $A_{n-1}$ and $K(A_{n-1})=K(\mathfrak{S}_n)=E_n$.

\begin{defi}
Let $(W,S)$ be a Coxeter system, for any $w\in W$ the (right) descent set of $w$ is defined to be
\[
	\Des(w)=\{s\in S: \ell(ws)<\ell(w)\}.
\]
Let $J\subset S$ and $D_J=\{w\in W: \Des(w)=J\}$, then the \emph{Springer number} of $W$ is defined to be the cardinality of largest descent class
\[
	K(W):=\max_{J\subset S}|D_J|
\]
\end{defi}

\begin{exa} Let $W=\mathfrak{S}_3$ and $S=\{s_1=(12),s_2=(23)\}$, then when $J=\{s_1\}$ or $\{s_2\}$, $D_J=\{213,312\}$ or $\{132,231\}$ attains its maximun size. In this case, $K(\mathfrak{S}_3)=2=E_3$.
		\[\begin{array}{l|c}
				w\in A_2		&  \Des(w)\\
				\hline
				123=id   		&  \emptyset\\
				132=s_2			&  s_2  \\
				213=s_1 	    &  s_1  \\
				231=s_1s_2      &  s_2	\\
				312=s_2s_1		&  s_1	\\
				321=s_1s_2s_1	& s_1,s_2  
				\end{array}\]	
For general $\mathfrak{S}_n$, we can see that the descent class $D_J$ attain its maximun size either when $J=\{s_1,s_3,\ldots\}$ or $J=\{s_2,s_4\ldots\}$. And $D_J$ is exactly $\mathsf{RAlt}_n$ and $\mathsf{Alt}_n$, where $\mathsf{RAlt_n}$ is the set of reverse alternating permutations in $\mathfrak{S}_n$. For more values of $K(W)$ of classical types, see table \ref{TableSpringer}, in which we denote $S_n=K(B_n)$ and $S_n^D=K(D_n)$.
\end{exa}

\begin{table}
\begin{center}
 \begin{tabular}{|c| c c c c c c c c|} 
 \hline
  $n$  & 0 & 1 & 2 & 3 & 4 & 5 & 6 & $\ldots$ \\ [0.5ex] 
 \hline
 $E_n$ & 1 & 1 & 1 & 2 & 5 & 16 & 61 & $\ldots$ \\ 
 \hline
 $S_n$ & 1 & 1 & 3 & 11& 57& 361& 2763 & $\ldots$\\
 \hline
 $S_n^D$ & 1 & 1 & 1 & 5 & 23 & 151 & 1141 & $\ldots$\\
 \hline
\end{tabular}
\end{center}
\caption{Springer number of type $A$, $B$ and $D$}
\label{TableSpringer}
\end{table}

\section{Snakes of type B}
By describing Springer number geometrically in terms of Weyl chambers, Arnol'd \cite{Arnold} showed that for irreducible Coxeter systems of type $A$, $B$, $D$ the Springer number counts various types of \emph{snakes} (up-down permutations and up-down signed permutations). Our study mainly focuses on the snakes of type $B$. 
\begin{defi}
Let $\sigma=\sigma_1\ldots\sigma_n\in B_n$.
\begin{itemize}
\item The signed permutation $\sigma$ is a snake if $\sigma_1>\sigma_2<\sigma_3>\ldots\sigma_n$. Let $\mathcal{S}_n\subset B_n$ be the set of snakes of size $n$.
\item Let $\SS_n^{0}\subset \SS_n$ be the subset consisting of the snakes $\sigma$ with $\sigma_1>0$.
\item Let $\SS_n^{00}\subset\SS_n^{0}$ be the subset consisting of the snakes $\sigma$ with $\sigma_1>0$ and $(-1)^n\sigma_n<0$.
\end{itemize}
\end{defi}
\begin{exa}
For example, as $n=2$ then 
\[\SS_2=\{1\bar{2},\bar{1}\bar{2},21,2\bar{1}\},\qquad\SS^0_2=\{1\bar{2},21,2\bar{1}\}, \qquad \SS_2^{00}=\{1\bar{2},2\bar{1}\}.
\]
\end{exa}
Note that $\SS_n^0$ is the subset of snakes of type $B$ so $|\SS_n^0|=S_n$. The snakes in $\SS_n$ are introduced by Arnol'd \cite{Arnold}  under the name \emph{$\beta$-snakes} to study snakes of type $B$ and $D$, with $|\SS_n|=2^nE_n$. The subset $\SS_n^{00}$ is a variant introduced by Josuat-Verg\`{e}s \cite{JV} with $|\SS_n^{00}|=2^{n-1}E_n$.

There is another surprising link between snakes and derivative polynomials of trigonometric fuctions. Hoffman \cite{Hoffman} and Josuat Verg\`{e}s \cite{JV} studied the polynomials $P_n(t)$, $Q_n(t)$ and $R_n(t)$ , which are defined as following
\[
	\frac{d^n}{dx^n}\tan x=P_n(\tan x),~~ \frac{d^n}{dx^n}\sec x=Q_n(\tan x)\sec x, ~~\frac{d^n}{dx^n}\sec^2 x=R_n(\tan x)\sec^2 x.
\]
Hoffman \cite{Hoffman} showed that $P_n(1)=2^nE_n$, $Q_n(1)=S_n$ and $P_n(1)-Q_n(1)=S_n^D$. Then Josuat-Verg\`{e} \cite{JV} defined the polynomail $R_n(t)$ and proved that $R_n(1)=2^nE_{n+1}$; morveover, he gave combinatorial interpretations to $P_n(t)$, $Q_n(t)$ and $R_n(t)$ in terms of the distributions of number of changes of sign $\cs$ on $\SS_n, \SS_n^0$ and $\SS_n^{00}$.

\begin{defi}
For a snake $\sigma=\sigma_1\sigma_2\cdots\sigma_n\in B_n$, let $\cs(\sigma)$ denote the number of changes of sign through the entries $\sigma_1,\sigma_2,\dots,\sigma_n$, i.e., $\cs(\sigma):=\#\{i: \sigma_i\sigma_{i+1}<0,  0\le i\le n\}$ with the following convention for the entries $\sigma_0$ and $\sigma_{n+1}$:
\begin{itemize}
  \item $\sigma_0=-(n+1)$ and $\sigma_{n+1}=(-1)^n (n+1)$ if $\sigma\in\SS_n$;
  \item $\sigma_0=0$ and $\sigma_{n+1}=(-1)^n (n+1)$ if $\sigma\in\SS_n^{0}$;
  \item $\sigma_0=0$ and $\sigma_{n+1}=0$ if $\sigma\in\SS_n^{00}$.
\end{itemize}
\end{defi}
\begin{thm}[Josuat-Verg\`{e}s \cite{JV}]
For all $n\ge 0$, we have
\[
	P_n(t)=\sum_{\sigma\in\SS_n} t^{\cs(\sigma)}, \qquad Q_n(t)=\sum_{\sigma\in\SS_n^{0}} t^{\cs(\sigma)}, \qquad R_n(t)=\sum_{\sigma\in\SS_{n+1}^{00}} t^{\cs(\sigma)}.
\]
\end{thm}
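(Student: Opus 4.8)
I would show that the two sides obey the same first-order recurrences and the same initial data. On the analytic side, differentiating $\frac{d^n}{dx^n}\tan x=P_n(\tan x)$ once more and using $(\tan x)'=1+\tan^2x$, $(\sec x)'=\sec x\tan x$, $(\sec^2x)'=2\sec^2x\tan x$ gives
\[
	P_{n+1}=(1+t^2)P_n',\qquad Q_{n+1}=(1+t^2)Q_n'+tQ_n,\qquad R_{n+1}=(1+t^2)R_n'+2tR_n,
\]
with $P_0=t$, $Q_0=R_0=1$. Packaged as exponential generating functions, $\mathcal P(x,t):=\sum_n P_n(t)\frac{x^n}{n!}=\tan(x+\arctan t)$, $\mathcal Q(x,t):=\sum_n Q_n(t)\frac{x^n}{n!}=\sec(x+\arctan t)/\sec(\arctan t)$ and $\mathcal R(x,t):=\sum_n R_n(t)\frac{x^n}{n!}=\mathcal Q(x,t)^2$ are the unique power-series solutions of $\partial_x\mathcal P=1+\mathcal P^2$, $\partial_x\mathcal Q=\mathcal P\,\mathcal Q$ and $\partial_x\mathcal R=2\,\mathcal P\,\mathcal R$ with $\mathcal P(0,t)=t$ and $\mathcal Q(0,t)=\mathcal R(0,t)=1$. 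Hence the theorem follows once one shows that the snake generating functions $\widehat P_n(t):=\sum_{\sigma\in\SS_n}t^{\cs(\sigma)}$, $\widehat Q_n(t):=\sum_{\sigma\in\SS_n^0}t^{\cs(\sigma)}$ and $\widehat R_n(t):=\sum_{\sigma\in\SS_{n+1}^{00}}t^{\cs(\sigma)}$ obey the same recurrences; the base cases $\widehat P_0=t$, $\widehat Q_0=\widehat R_0=1$ read off directly from the $\cs$-conventions applied to the (essentially empty, resp. one-element) snakes in $\SS_0$, $\SS_0^0$, $\SS_1^{00}$.

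\textbf{The combinatorial recurrence.} I would establish these in the equivalent convolution form $\widehat P_{n+1}=\delta_{n,0}+\sum_{k=0}^n\binom nk\widehat P_k\widehat P_{n-k}$, $\widehat Q_{n+1}=\sum_{k=0}^n\binom nk\widehat P_k\widehat Q_{n-k}$ and $\widehat R_{n+1}=2\sum_{k=0}^n\binom nk\widehat P_k\widehat R_{n-k}$ by peeling a snake at its entry of largest absolute value, exactly as in the classical proof of $2E_{n+1}=\sum_k\binom nk E_kE_{n-k}$ for alternating permutations. Concretely, in $\sigma\in\SS_{n+1}$ the entry $\pm(n+1)$ occupies a peak of the zigzag precisely when its sign is $+$ and a valley precisely when its sign is $-$; removing it and reversing the block to its right splits $\sigma$ into two shorter snakes $\alpha',\beta'$ of complementary lengths $k$ and $n-k$, with $\binom nk$ recording which magnitudes land in each block, and the two signs of $\pm(n+1)$ selecting the two parity classes of $k$ — which is what upgrades the half-sums over peaks and over valleys into the full convolution. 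For the statistic, one uses that $\cs(\sigma)$ depends only on the sign word of $\sigma$ together with the parity-dependent boundary entries $\sigma_0,\sigma_{n+1}$: after the cut, $\alpha'$ inherits its right boundary, and $\beta'$ its left boundary, from the sign of the removed entry. Making $\cs$ additive along the cut therefore forces one to run the three recurrences as a single coupled system, using a suitable finite list of boundary-decorated classes built from $\SS_n$, $\SS_n^0$, $\SS_n^{00}$, and the two extra fixed-sign boundary conditions defining $\SS_n^0$ and $\SS_{n+1}^{00}$ are exactly what produce the lower-order terms $tQ_n$ and $2tR_n$. An induction on $n$ then finishes the proof.

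\textbf{Main obstacle.} The crux is this last piece of bookkeeping: isolating the correct finite family of boundary-decorated snake classes so that the peeling map stays within the family, and verifying that $\cs$ transforms with the predicted shifts — in particular disposing cleanly of the degenerate cases where $\pm(n+1)$ sits in the first or the last slot, which are precisely the configurations responsible for the $\delta_{n,0}$ term and for the $tQ_n$, $2tR_n$ corrections. Everything upstream (the analytic recurrences, the reduction to a recurrence, the base cases) is routine.
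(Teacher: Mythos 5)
First, a point of reference: the paper does not prove this statement. It is imported verbatim from Josuat-Verg\`es \cite{JV}, so there is no internal proof to compare yours against; I can only judge the proposal on its own terms.

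Your analytic half is correct and routine: the recurrences $P_{n+1}=(1+t^2)P_n'$, $Q_{n+1}=(1+t^2)Q_n'+tQ_n$, $R_{n+1}=(1+t^2)R_n'+2tR_n$, the closed forms $\tan(x+\arctan t)$, $\sec(x+\arctan t)/\sec(\arctan t)$, $\mathcal{Q}^2$, the passage to convolution identities, and the base cases $\widehat P_0=t$, $\widehat Q_0=\widehat R_0=1$ all check out. The entire content of the theorem, however, sits in the combinatorial half, and there I see one step that would fail as stated and one essential lemma that is only gestured at. The failing step: cutting $\sigma\in\SS_{n+2}^{00}$ at the entry $\pm(n+2)$ produces two blocks, each carrying exactly one zero boundary (from the convention $\sigma_0=\sigma_{n+3}=0$) and one forced-sign boundary (the removed maximal entry). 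Such blocks are of type $\SS^{0}$, not of types $\SS$ and $\SS^{00}$; no contiguous block of the cut snake can retain two zero boundaries. Hence the peeling yields $\widehat R_{n+1}=\sum_k\binom{n+1}{k}\widehat Q_k\widehat Q_{n+1-k}$ (i.e.\ $\mathcal R=\mathcal Q^2$), and it cannot yield the recurrence $\widehat R_{n+1}=2\sum_k\binom{n}{k}\widehat P_k\widehat R_{n-k}$ that you announce you will prove. This is repairable (prove $\widehat Q=Q$ first, then deduce $\widehat R=R$ from $\mathcal R=\mathcal Q^2$), but the plan for $R$ as written does not go through; relatedly, your remark that the boundary conditions ``produce the lower-order terms $tQ_n$ and $2tR_n$'' conflates the derivative-form recurrences with the convolution-form ones, which contain no such terms. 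The deferred lemma: even for $P$ and $Q$, reversing the right block is not enough --- depending on the parities of $n$ and of the cut position one must also negate it to restore both the down--up shape and the boundary sign conventions entering $\cs$, and one must then verify that $\cs$ splits additively across the cut under these normalizations. You correctly identify this as the crux, but since it is exactly where the theorem lives, the proposal remains a plausible outline with its central verification missing.
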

\noindent
Let $D$ be the ordinary differentiate operator, and $U$ be the operator of multiplying $t$ on the vector space of polynomials $\mathbb{C}[t]$. From the recurrence when differentiating $\tan x$ and $\sec x$, we can easily see that the definition of polynomials $P_n, Q_n$ and $R_n$ can be rephrased as
\begin{align*}
	& P_n(t)=(D+UUD)^nt \\
	& Q_n(t)=(D+UDU)^n1 \\
	& R_n(t)=(D+DUU)^n1
\end{align*}
for $n\ge 0$.
 Josuat-Verg\`{e}s \cite{JV} defined the $q$-analogs of the derivative polynomials $Q_n$ and $R_n$ via the \emph{$q$-derivative}. Let $D$ be the $q$-analog of the differential operator acting on polynomials $f(t)$ by
\begin{equation}\label{q-derivative}
(D f)(t) := \frac{f(qt)-f(t)}{(q-1)t}
\end{equation}
and $U$ be the operator acting on $f(t)$ by multiplication by $t$. Notice that the $q$-derivative $D(t^n)=[n]_q t^{n-1}$ and the communication relation $DU-qUD=1$ hold.\\
\noindent Note that for $n\ge 1$
\begin{align*}
	P_n &=((I+UU)D)^{n-1}(I+UU)1=(I+UU)(D+DUU)^{n-1}1\\
		&=(I+UU)R_{n-1}=(1+t^2)R_{n-1}.
\end{align*}
Hence we have $P_{n}(t)=(1+t^2)R_{n-1}(t)$ for $n\ge 1$. This relation also holds for the $q$-analogs of $P_n$ and $R_n$, therefore we only focus on q-analogs of $Q_n$ and $R_n$.\\
\begin{defi}
The $q$-analogs of $Q_n$ and $R_n$ are defined algebraically by
\begin{equation}
Q_n(t,q):=(D+UDU)^n 1, \qquad R_n(t,q):= (D+DUU)^n 1.
\end{equation}
\end{defi}
\noindent
Several of the initial polynomials are listed below:
\begin{align*}
Q_0(t,q) &= 1 \\
Q_1(t,q) &= t \\
Q_2(t,q) &= 1+(1+q)t^2 \\
Q_3(t,q) &= (2+2q+q^2)t+(1+2q+2q^2+q^3)t^3, \\
& \\
R_0(t,q) &= 1 \\
R_1(t,q) &= (1+q)t \\
R_2(t,q) &= (1+q)+(1+2q+2q^2+q^3)t^2 \\
R_3(t,q) &= (2+5q+5q^2+3q^3+q^4)t+(1+3q+5q^2+6q^3+5q^4+3q^5+q^6)t^3.
\end{align*}

Applying the Matrix Ansatz approach, Josuat-Verg\`{e}s obtain the generating function of $Q_n(t,q)$ and $R_n(t,q)$.
\begin{thm} {\rm\bf (Josuat-Verg\`es)} \label{thm:QR-continued-fraction}
\begin{equation}\label{eqn:QR-conti}
\sum_{n\ge 0} Q_n(t,q) x^n = \mathfrak{F}(\mu^{Q}_h,\lambda^{Q}_h), \qquad
\sum_{n\ge 0} R_n(t,q) x^n = \mathfrak{F}(\mu^{R}_h,\lambda^{R}_h),
\end{equation}
where
\[
\left\{\begin{array}{rl}
\mu^{Q}_h &= tq^h([h]_q+[h+1]_q)\\ [0.8ex]
\lambda^{Q}_h &= (1+t^2q^{2h-1})[h]^2_q,
       \end{array}
\right.
\qquad
\left\{\begin{array}{rl}
\mu^{R}_h &= tq^h(1+q)[h+1]_q\\ [0.8ex]
\lambda^{R}_h &= (1+t^2q^{2h})[h]_q[h+1]_q.
       \end{array}
\right.
\]
\end{thm}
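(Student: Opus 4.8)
The plan is to follow the operator / \emph{Matrix Ansatz} method of \cite{CJW,CJK} that underlies Theorem~\ref{thm:B-enumerator-cf}, now applied to the transfer operators assembled from the $q$-derivative $D$ and the multiplication operator $U$, which satisfy $DU-qUD=1$. First I would put the two operators into ``lowering $+$ raising'' form relative to the degree grading of $\mathbb{C}[t]$: the commutation relation gives $UDU=qU^2D+U$ and $DUU=q^2U^2D+(q+1)U$, so on monomials $(D+UDU)t^k=[k]_q t^{k-1}+[k+1]_q t^{k+1}$ and $(D+DUU)t^k=[k]_q t^{k-1}+[k+2]_q t^{k+1}$. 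Expanding $(D+UDU)^n1$ over the $2^n$ words in the letters $D$ and $UDU$ then exhibits $Q_n(t,q)$ as a sum, over non-negative lattice paths of length $n$ started at $0$ (an up-step from height $k$ weighted $[k+1]_q$, a down-step from height $k$ weighted $[k]_q$), of (product of step weights)$\cdot t^{\,\text{final height}}$. This already signals a continued fraction, but it is the ``walk to an arbitrary endpoint'' generating function, so a genuine change of basis is needed to bring it to J-fraction form, and that is what the Matrix Ansatz supplies.

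The heart of the proof is to realize $Q_n(t,q)=\langle W|C^n|V\rangle$ for a suitable transfer matrix $C$. I would (i) set up a Matrix Ansatz of the type in \cite{CJW}: matrices $\mathbf D,\mathbf E$, a row vector $\langle W|$, a column vector $|V\rangle$ with $\langle W|V\rangle=1$, boundary conditions on $|V\rangle$ and $\langle W|$, and a $q$-deformed commutation relation $\mathbf D\mathbf E-q\mathbf E\mathbf D=(\text{linear in }\mathbf D,\mathbf E)$, chosen so that $\langle W|(\mathbf D+\mathbf E)^n|V\rangle$ obeys the defining recursion $Q_{n+1}=(D+UDU)Q_n$, $Q_0=1$ — hence equals $Q_n(t,q)$ for \emph{every} solution of the Ansatz, by the same argument that establishes the $B_n$ Ansatz in \cite{CJW}. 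Then (ii) I would exhibit the particular solution where $\mathbf D+\mathbf E$ is the semi-infinite tridiagonal (Jacobi) matrix $J^Q$ with diagonal entries $\mu^Q_h$ and with $J^Q_{h+1,h}J^Q_{h,h+1}=\lambda^Q_{h+1}$, and $\langle W|=\langle e_0|$, $|V\rangle=|e_0\rangle$; this identifies $Q_n(t,q)=\langle e_0|(J^Q)^n|e_0\rangle$. Finally (iii) the classical correspondence between a Jacobi matrix and its J-fraction (the last step in the proof of Theorem~\ref{thm:B-enumerator-cf} in \cite{CJK}; see Definition~\ref{def:J-fraction}) gives $\sum_{n\ge0}Q_n(t,q)x^n=\mathfrak F(\mu^Q_h,\lambda^Q_h)$, the first identity of (\ref{eqn:QR-conti}). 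The statement for $R_n(t,q)=(D+DUU)^n1$ follows identically after replacing the raising part $UDU$ by $DUU$ — equivalently $[k+1]_q$ by $[k+2]_q$ in the monomial action — which is precisely what converts $\mu^Q_h,\lambda^Q_h$ into $\mu^R_h,\lambda^R_h$.

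The main obstacle is step (ii): pinning down the explicit tridiagonal solution, equivalently the basis of $q$-orthogonal polynomials of $\mathbb{C}[t]$ in which $(1+qU^2)D+U$ becomes $J^Q$, and then confirming the closed forms $\mu^Q_h=tq^h([h]_q+[h+1]_q)$ and $\lambda^Q_h=(1+t^2q^{2h-1})[h]^2_q$ (and their $R$-counterparts). This is the only genuinely computational part, and it is here that $DU-qUD=1$ does the work: it is exactly what forces the factors $[h]_q$ into the subdiagonal and the powers $q^h$, $q^{2h-1}$ into the weights. I would carry this out either by induction on $h$ on the recurrence coefficients of the tridiagonalizing basis, or equivalently by solving the $q$-difference equation satisfied by the successive tails of the continued fraction.
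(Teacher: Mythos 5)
You should first note that the thesis itself offers no proof of Theorem~\ref{thm:QR-continued-fraction}: it is imported verbatim from Josuat-Verg\`es \cite{JV} with the single remark that it follows from ``the Matrix Ansatz approach,'' so there is no in-paper argument to measure your proposal against. Your outline does follow the method the paper names, and your preliminary computations are correct: $UDU=qU^2D+U$, $DUU=q^2U^2D+(q+1)U$, hence $(D+UDU)t^k=[k]_qt^{k-1}+[k+1]_qt^{k+1}$ and $(D+DUU)t^k=[k]_qt^{k-1}+[k+2]_qt^{k+1}$, and the resulting ``Dyck-prefix weighted by $t^{\text{final height}}$'' expansion of $Q_n$ checks out against $Q_1,Q_2,Q_3$.

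There are, however, two genuine gaps. First, the Ansatz you propose in step (i) cannot be realized by the natural splitting into raising and lowering parts: taking $\mathbf{D},\mathbf{E}$ to be the lowering part ($e_k\mapsto[k]_qe_{k-1}$) and raising part ($e_k\mapsto[k+1]_qe_{k+1}$) of $D+UDU$ in the monomial basis, the commutator $\mathbf{D}\mathbf{E}-q\mathbf{E}\mathbf{D}$ is the diagonal operator with non-constant entries $[k+1]_q^2-q[k]_q^2$, which is not expressible as a linear combination of $\mathbf{D}$, $\mathbf{E}$ and $I$; and you never exhibit any other pair $\mathbf{D},\mathbf{E}$ satisfying a relation of the promised form, so the ``valid for every solution of the Ansatz'' argument has no verified hypotheses to run on. Second, and more seriously, step (ii) --- producing the tridiagonal solution whose entries are $\mu^Q_h=tq^h([h]_q+[h+1]_q)$ and $\lambda^Q_h=(1+t^2q^{2h-1})[h]_q^2$ (and the $R$-analogues) --- is the entire quantitative content of the theorem, and it is exactly the step you defer with ``I would carry this out.'' Nothing in the proposal certifies those closed forms. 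A concrete way to close both gaps at once, consistent with your first paragraph, is to bypass the Ansatz and exhibit a weight-preserving bijection from your length-$n$ up/down prefixes ending at height $k$ (carrying the extra factor $t^k$) to the bicolored Motzkin paths of $\T^{*}_n$ (respectively $\T_n$) of Propositions~\ref{pro:T*n-weighting} and \ref{pro:Tn-weighting}; Flajolet's Theorem~\ref{thm:Flajolet} then yields $\mathfrak{F}(\mu^Q_h,\lambda^Q_h)$ and $\mathfrak{F}(\mu^R_h,\lambda^R_h)$ directly, with the two summands in each $\mu_h$ and $\lambda_h$ accounting for the two colours. As it stands, the proposal is a credible scaffold but not yet a proof.
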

Notice that $Q_{2n}(0,q)=E_{2n}(q)$ and $R_{2n+1}(0,q)=E_{2n+1}(q)$, the $q$-secant and $q$-tangent numbers defined in Eqs.\,(\ref{eqn:q-secant}) and (\ref{eqn:q-tangent}).

\chapter{Weighted Motzkin paths and Signed Permutations}
  In this chapter we first review Flajolet's fundamental lemma on continued fractions. It helps us associate combinatorial objects whose generating functions possessing continued fraction expansion with certain weighted Motzkin paths. Then we present the weight schemes of Motzkin paths associated with the enumerator of signed permutations and the $(t,q)$-derivative polynomials respectively.

\section{Continued fractions and weighted Motzkin paths}

A \emph{Motzkin path} of length $n$ is a lattice path from the origin to the point $(n,0)$ staying weakly above the $x$-axis, using the \emph{up step} $(1,1)$, \emph{down step} $(1,-1)$, and \emph{level step} $(1,0)$.  Let $\U$, $\D$ and $\L$ denote an up step, a down step and a level step, accordingly.

We consider a Motzkin path $\mu=w_1w_2\cdots w_n$ with a weight function $\rho$ on the steps. The \emph{weight} of $\mu$, denoted by $\rho(\mu)$, is defined to be the product of the weight $\rho(w_j)$ of each step $w_j$ for $j=1,2,\dots,n$.
The \emph{height} of a step $w_j$ is the $y$-coordinate of the starting point of $w_j$.
Making use of Flajolet's formula \cite[Proposition 7A]{Flaj}, the generating function for the weight count of the Motzkin paths can be expressed as a continued fraction.

\begin{thm} {\rm\bf (Flajolet)} \label{thm:Flajolet} For $h\ge 0$, let $a_h$, $b_h$ and $c_h$ be polynomials such that each monomial has coefficient 1. Let $M_n$ be the set of weighted Motzkin paths of length $n$ such that the weight of an up step  (down step or level step, respectively) at height $h$ is one of the monomials appearing in $a_h$ ($b_h$ or $c_h$, respectively). Then the generating function for $\rho(M_n)=\sum_{\mu\in M_n} \rho(\mu)$ has the expansion
\begin{equation} \label{eqn:cf-Motzkin}
\sum_{n\ge 0} \rho(M_n)x^n
      =\frac{1}{1-c_0x}{{}\atop{-}}\frac{a_0b_1x^2}{1-c_1x}{{}\atop{-}}\frac{a_1b_2x^2}{1-c_2x}{{}\atop{-}}\cdots
\end{equation}
\end{thm}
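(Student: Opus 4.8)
The plan is to prove (\ref{eqn:cf-Motzkin}) by the standard \emph{first-return decomposition} of Motzkin paths, which converts it into a recursive functional equation whose iteration is exactly the continued fraction. Since every monomial of $a_h$, $b_h$, $c_h$ has coefficient $1$, it is cleanest to regard a weighted Motzkin path as a pair consisting of an ordinary Motzkin path together with a choice, for each step, of one of the monomials permitted at that step's height; the weight $\rho(\mu)$ is the product of these chosen monomials, so $\rho(M_n)$ is the sum of $\rho(\mu)$ over all such ``colored'' paths of length $n$. For $h\ge 0$ let $F_h(x)\in\mathbb{C}[[x]]$ be the generating function, by length, of all colored Motzkin paths that start and end at height $h$ and never drop below height $h$, where a step at height $h+k$ is weighted using $a_{h+k}$, $b_{h+k}$, $c_{h+k}$. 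By definition $F_0(x)=\sum_{n\ge 0}\rho(M_n)x^n$ is the left-hand side of (\ref{eqn:cf-Motzkin}).

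The key step is to decompose a nonempty path counted by $F_h$ according to its first step. If it begins with a level step $\L$ at height $h$, that step contributes a monomial of $c_h$ and a factor $x$, followed by an arbitrary path counted by $F_h$. If it begins with an up step $\U$ at height $h$, that step contributes a monomial of $a_h$ and a factor $x$; the path then stays weakly above height $h+1$ until its first return to height $h$, which is effected by a down step $\D$ at height $h+1$ contributing a monomial of $b_{h+1}$ and a factor $x$; the portion strictly in between is an arbitrary path counted by $F_{h+1}$, and the remainder after the return is an arbitrary path counted by $F_h$. As the first return is unique and an up step can never be the last step, this is a length-preserving bijection, giving
\[
F_h(x)=1+c_h\,x\,F_h(x)+a_h b_{h+1}\,x^2\,F_{h+1}(x)\,F_h(x),
\]
equivalently $F_h(x)=\bigl(1-c_h x-a_h b_{h+1}x^2F_{h+1}(x)\bigr)^{-1}$.

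Finally I would iterate this relation starting from $h=0$: substituting the formula for $F_{h+1}$ into that for $F_h$ successively produces the finite truncations of the right-hand side of (\ref{eqn:cf-Motzkin}). Because $F_h(x)=1+O(x)$ for every $h$, the $k$-th truncation agrees with $F_0(x)$ modulo $x^{k+1}$, so the continued fraction converges $x$-adically in $\mathbb{C}[[x]]$ to $F_0(x)$, which is the assertion. I do not anticipate a genuine obstacle here — this is Flajolet's classical lemma — and the only points requiring care are making the coloring bookkeeping precise enough that the first-return decomposition is an honest bijection, and invoking $x$-adic convergence when passing from the finite recursion to the infinite continued fraction.
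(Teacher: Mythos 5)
Your proof is correct: the first-return decomposition yields exactly the recursion $F_h=\bigl(1-c_hx-a_hb_{h+1}x^2F_{h+1}\bigr)^{-1}$, whose iteration is the stated continued fraction, and the $x$-adic convergence argument is sound (indeed the $k$-th convergent already agrees with $F_0$ modulo $x^{2(k+1)}$, since level $k+1$ only influences paths of length at least $2(k+2)$). The paper gives no proof of this statement, citing it as Proposition 7A of Flajolet, and your argument is essentially Flajolet's original one, so there is nothing to reconcile.
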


\section{Signed permutations via bicolored Motzkin paths}
A \emph{bicolored Motzkin path} (also known as \emph{2-Motzkin path}) is a Motzkin path with two kinds of level steps, say \emph{straight} and \emph{wavy}, denoted by $\L$ and $\W$, respectively. For a nonnegative integer $h$, let $z^{(h)}$ denote a step $z$ at height $h$ in a bicolored Motzkin path for $z\in\{\U,\L,\W,\D\}$.

With theorem \ref{thm:Flajolet}, we may observe that (\ref{eqn:B-enumerator-cf}) and (\ref{eqn:QR-conti}) both are the generating functions of the weight of some weighted bicolored Motzkin paths.\\

By theorem \ref{thm:B-enumerator-cf}, the initial part of the expansion of generating function of $B_n(y,t,q)$ is
{\scriptsize
\begin{align*}
&\sum_{n\ge 0}B_n(y,t,q)x^n=\\
&\frac{1}{1-(y^2+yt)[1]_qx}{{}\atop{-}}
\frac{(1+ytq)[1]_q(y^2+yt)[1]_qx^2}{1-\left((y^2+ytq)[2]_q+(1+ytq)[1]_q\right)x}{{}\atop{-}}
\frac{(1+ytq^2)[2]_q(y^2+ytq)[2]_qx^2}{1-\left((y^2+ytq^2)[3]_q+(1+ytq^2)[2]_q\right)x}{{}\atop{\ldots}}
\end{align*}
}

Therefore, by theorem \ref{thm:Flajolet} the following set $\M_n$ of weighted bicolored Motzkin paths has the generating function of weights equal to $B_n(y,t,q)$.

\begin{defi} {\rm
Let $\M_n$ be the set of weighted bicolored Motzkin paths of length $n$ containing no wavy level steps on the $x$-axis, with a weight function $\rho$ such that for $h\ge 0$,
\begin{itemize}
\item $\rho(\U^{(h)})\in\{y^2,y^2q,\dots,y^2q^{h}\}\cup\{ytq^{h},ytq^{h+1},\dots,ytq^{2h}\}$,
\item $\rho(\L^{(h)})\in\{y^2,y^2q,\dots,y^2q^{h}\}\cup\{ytq^{h},ytq^{h+1},\dots,ytq^{2h}\}$,
\item $\rho(\W^{(h)})\in\{1,q,\dots,q^{h-1}\}\cup\{ytq^{h},ytq^{h+1},\dots,ytq^{2h-1}\}$ for $h\ge 1$,
\item $\rho(\D^{(h+1)})\in\{1,q,\dots,q^{h}\}\cup\{ytq^{h+1},ytq^{h+2},\dots,ytq^{2h+1}\}$.
\end{itemize}
}
\end{defi}

In fact, extended from Foata--Zeilberger bijection \cite{Foata-Zeil}, Corteel, Josuat-Verg\`{e}s and Kim established a bijection between the set $B_n$ of signed permutations and the set $\M_n$ of weighted bicolored Motzkin paths \cite[Subsection 7.1]{CJK}.
\begin{rem} {\rm
Rather than the original weight function given in \cite{CJK}, we have interchanged the possible weights of the up steps and the down steps, in the sense of traversing the paths backward. This unifies the possible weights for the initial step (either $\U^{(0)}$ or $\L^{(0)}$), for the purpose of restructuring the weighted bicolored Motzkin paths (Proposition \ref{pro:restructure}).
}
\end{rem}

\begin{thm}  \label{thm:Corteel} {\rm\bf (Corteel, Josuat-Verg\`es, Kim)}  There is a bijection $\Gamma$ between $B_n$ and $\M_n$ such that
\begin{equation}
\sum_{\sigma\in B_n} y^{\fwex(\sigma)}t^{\nega(\sigma)}q^{\cro_B(\sigma)} =\rho(\M_n).
\end{equation}
\end{thm}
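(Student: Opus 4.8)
The plan is to construct $\Gamma$ by a Foata--Zeilberger style ``history'' encoding, reading the window notation $\sigma_1,\dots,\sigma_n$ of a signed permutation from left to right and recording, at each index, how the arc incident to $i$ in the full pignose diagram sits relative to the arcs already present. First I would classify each $i\in[n]$ into one of finitely many types according to the signs and comparisons appearing in the definition of $\croB$: whether $i$ is a left endpoint only, a right endpoint only, or both, combined with the sign of $\sigma_i$. Each type dictates the shape of the $i$-th step: an up step $\U$ when $i$ is a ``valley''-type index, a down step $\D$ when it is a ``peak''-type index, a straight level step $\L$ or a wavy level step $\W$ for the two kinds of ``double'' indices; the prohibition of wavy steps on the $x$-axis reflects that a fixed point (or an index that only records a sign change) cannot occur when no arc is open. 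The height after $i$ steps should equal the number of upper arcs currently open, which forces the path to stay weakly above the axis and return to $0$ at the end; this is essentially the Foata--Zeilberger bijection of \cite{Foata-Zeil} when $t=0$, extended to $[\pm n]$ as in \cite[Subsection 7.1]{CJK}.

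Next I would pin down the weights so that $y^{\fwex(\sigma)}t^{\nega(\sigma)}q^{\croB(\sigma)}$ factors step by step. Since $\fwex=2\wex+\nega$, an index with $\sigma_i\ge i>0$ should contribute $y^2$ to its $\U$ or $\L$ step, an index with $\sigma_i<0$ should contribute an extra $y$ and a $t$ (so the weight becomes $yt$ times a power of $q$), and an index with $0<\sigma_i<i$ should contribute $1$ to its $\W$ or $\D$ step; checking this against the four weight sets in the definition of $\M_n$ is a bookkeeping matter. The power of $q$ is the delicate ingredient: at the moment we process $i$ at height $h$, the arc incident to $i$ may cross any subset of the currently open upper arcs, and the number it actually crosses is an integer in $\{0,1,\dots,h\}$ in the two ``positive'' cases and in a shifted range in the ``negative'' case, which is exactly where the factors $q^{h}$ and $q^{h-1}$ in the weight sets originate. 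I would make this precise by exhibiting, for a fixed underlying path shape, a bijection between signed permutations mapping to that shape and sequences of monomial choices from the prescribed sets, so that summing over $B_n$ reproduces $\prod_j\rho(w_j)$ expanded as a sum of monomials, namely $\rho(\M_n)$.

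The main obstacle, and the step requiring the most care, is this crossing bookkeeping: proving that $\croB(\sigma)$ equals the total $q$-exponent of $\Gamma(\sigma)$. This means verifying that the three clauses defining a type $B$ crossing ($i<j\le\sigma_i<\sigma_j$, $-i<j\le-\sigma_i<\sigma_j$, $i>j>\sigma_i>\sigma_j$) are counted, with no overlap and no omission, by the local rule that records for each step how many open arcs it crosses versus nests under; the symmetry of the pignose diagram about the horizontal line lets us track only the upper arcs, keeping the bound at the current height $h$ rather than $2h$. Once the bijection $\Gamma$ is in place together with the threefold statistic correspondence $\fwex\leftrightarrow$ ($y$-exponent), $\nega\leftrightarrow$ ($t$-exponent), $\croB\leftrightarrow$ ($q$-exponent), Theorem~\ref{thm:Flajolet} applied to the weight sets of $\M_n$ recovers the continued fraction of Theorem~\ref{thm:B-enumerator-cf}, so $\rho(\M_n)=B_n(y,t,q)=\sum_{\sigma\in B_n} y^{\fwex(\sigma)}t^{\nega(\sigma)}q^{\croB(\sigma)}$, which is the claim. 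I would close by remarking that the interchange of up-step and down-step weights noted in the preceding remark corresponds to traversing the path backward, so the present $\Gamma$ is the same bijection as in \cite{CJK} up to this reflection.
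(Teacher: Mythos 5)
First, note that the paper does not actually prove Theorem~\ref{thm:Corteel}: it is imported from \cite[Subsection 7.1]{CJK}, with only the cosmetic change (recorded in the remark just before it) of interchanging the up-step and down-step weights. So there is no in-paper proof to compare against, and your proposal must be judged as a reconstruction of the CJK argument. At the level of strategy it is the right one --- a Foata--Zeilberger-type history encoding built on the full pignose diagram, with the step shape at $j$ determined by the valley/peak/double-ascent/double-descent type of $j$, the $y$- and $t$-exponents determined by whether $j$ is a weak excedance position, a negative entry, or neither, and the $q$-exponent recording how the arc through $j$ sits among the currently open upper arcs.

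As a proof, however, the proposal has a genuine gap, and you name it yourself: the verification that the three clauses defining $\croB$ are counted exactly once, with no overlap and no omission, by the local $q$-exponents is the entire mathematical content of the theorem, and you defer it (``I would make this precise by exhibiting\dots'', ``checking this against the four weight sets is a bookkeeping matter''). Without that case analysis --- in particular, without checking that the clause $-i<j\le-\sigma_i<\sigma_j$ is what produces the shifted ranges such as $\{ytq^{h},\dots,ytq^{2h}\}$ rather than $\{yt,\dots,ytq^{h}\}$, and that the up--down symmetry of the pignose diagram really reduces the count to upper-arc crossings only --- nothing has been established. A second, smaller issue is the closing paragraph: once $\Gamma$ and the threefold statistic correspondence are in hand, the identity $\sum_{\sigma\in B_n}y^{\fwex(\sigma)}t^{\nega(\sigma)}q^{\croB(\sigma)}=\rho(\M_n)$ is immediate and Theorem~\ref{thm:Flajolet} plays no role in it; conversely, if one instead deduces the identity from Theorem~\ref{thm:Flajolet} together with the continued fraction of Theorem~\ref{thm:B-enumerator-cf}, that route yields only the equality of polynomials and not the existence of a bijection. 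The two arguments should not be blended as though one completes the other.
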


\section{Weight Schemes for $Q_n(t,q)$ and $R_n(t,q)$}

By Theorem \ref{thm:QR-continued-fraction}, the initial part of the expansion of the generating functions for $R_n(t,q)$ and $Q_n(t,q)$ are shown below.
\begin{align*}
\sum_{n\ge 0} R_n(t,q)x^n &=\frac{1}{1-t(1+q)[1]_qx}{{}\atop{-}}\frac{(1+t^2q^2)[1]_q[2]_qx^2}{1-tq(1+q)[2]_qx}{{}\atop{-}}\frac{(1+t^2q^4)[2]_q[3]_qx^2}{1-tq^2(1+q)[3]_qx}\cdots, \\
\sum_{n\ge 0} Q_n(t,q)x^n &=\frac{1}{1-t[1]_qx}{{}\atop{-}}\frac{(1+t^2q)[1]_q^2x^2}{1-tq([1]_q+[2]_q)x}{{}\atop{-}}\frac{(1+t^2q^3)[2]_q^2x^2}{1-tq^2([2]_q+[3]_q)x}\cdots.
\end{align*}

Therefore, by Theorem \ref{thm:Flajolet} the following observations provide feasible weight schemes that realize the polynomials  $R_n(t,q)$ and $Q_n(t,q)$ in terms of the bicolored Motzkin paths. 

\begin{pro} \label{pro:Tn-weighting}
Let $\T_n$ be the set of weighted bicolored Motzkin paths of length $n$ with a weight function $\rho$ such that for $h\ge 0$,
\begin{itemize}
\item $\rho(\U^{(h)})\in\{1,q,\dots,q^{h}\}\cup\{t^2q^{2h+2},t^2q^{2h+3},\dots,t^2q^{3h+2}\}$,
\item $\rho(\L^{(h)})\in\{tq^{h+1},tq^{h+2},\dots,tq^{2h+1}\}$,
\item $\rho(\W^{(h)})\in\{tq^{h},tq^{h+1},\dots,tq^{2h}\}$,
\item $\rho(\D^{(h+1)})\in\{1,q,\dots,q^{h+1}\}$.
\end{itemize}
Then we have
\[
\sum_{n\ge 0} \rho(\T_n)x^n = \sum_{n\ge 0} R_n(t,q)x^n.
\]
\end{pro}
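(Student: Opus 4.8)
The plan is to apply Flajolet's formula (Theorem~\ref{thm:Flajolet}) to the weight scheme defining $\T_n$ and check that the resulting continued fraction coincides with the one for $\sum_{n\ge 0} R_n(t,q)x^n$ given in Theorem~\ref{thm:QR-continued-fraction}. So the proof is essentially a bookkeeping verification: read off the polynomials $a_h,b_h,c_h$ from the allowed step weights, form the products $a_h b_{h+1}$ and the sums $c_h$, and match them against $\lambda^R_{h+1}$ and $\mu^R_h$.

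First I would record the ``alphabet'' polynomials. From the list in Proposition~\ref{pro:Tn-weighting}, an up step at height $h$ contributes $a_h=(1+q+\cdots+q^h)+(t^2q^{2h+2}+\cdots+t^2q^{3h+2})=[h+1]_q+t^2q^{2h+2}[h+1]_q=(1+t^2q^{2h+2})[h+1]_q$; a down step from height $h+1$ contributes $b_{h+1}=1+q+\cdots+q^{h+1}=[h+2]_q$, equivalently $b_h=[h+1]_q$ for $h\ge 1$; a straight level step at height $h$ contributes $tq^{h+1}+\cdots+tq^{2h+1}=tq^{h+1}[h+1]_q$; a wavy level step at height $h$ contributes $tq^h+\cdots+tq^{2h}=tq^h[h+1]_q$; so the total level-step polynomial is $c_h=tq^{h+1}[h+1]_q+tq^h[h+1]_q=tq^h(1+q)[h+1]_q$. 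Each monomial indeed has coefficient $1$, as Theorem~\ref{thm:Flajolet} requires, since the geometric progressions listed are disjoint ranges of powers.

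Next I would assemble the continued fraction. Flajolet's formula gives $c_h$ as the level-step weight at height $h$, which is exactly $\mu^R_h=tq^h(1+q)[h+1]_q$, and it gives $a_h b_{h+1}$ as the numerator at depth $h+1$. Here $a_h b_{h+1}=(1+t^2q^{2h+2})[h+1]_q\cdot[h+2]_q$; reindexing with $h'=h+1$ this is $(1+t^2q^{2h'})[h']_q[h'+1]_q=\lambda^R_{h'}$, matching Theorem~\ref{thm:QR-continued-fraction}. Since both series are determined by the same $\mathfrak{F}(\mu^R_h,\lambda^R_h)$, Theorem~\ref{thm:QR-continued-fraction} yields $\sum_{n\ge 0}\rho(\T_n)x^n=\sum_{n\ge 0}R_n(t,q)x^n$.

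I do not anticipate a genuine obstacle here; the only place to be careful is the index shift between ``down step from height $h+1$'' and the numerator convention $a_h b_{h+1}$ in (\ref{eqn:cf-Motzkin}), together with making sure the split of the allowed weights into the two disjoint geometric ranges is set up so that no monomial is repeated (otherwise the coefficient-$1$ hypothesis of Theorem~\ref{thm:Flajolet} fails and the reading-off step is invalid). Checking the low-order terms against the explicit list $R_0,R_1,R_2,R_3$ given earlier is a cheap sanity check I would include to confirm the weight ranges were transcribed correctly.
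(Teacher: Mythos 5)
Your proposal is correct and is essentially the paper's own argument: the paper likewise displays the continued fraction $\mathfrak{F}(\mu^R_h,\lambda^R_h)$ from Theorem~\ref{thm:QR-continued-fraction} and invokes Theorem~\ref{thm:Flajolet} to read off the weight scheme, which is exactly your bookkeeping of $a_h=(1+t^2q^{2h+2})[h+1]_q$, $b_{h+1}=[h+2]_q$, $c_h=tq^h(1+q)[h+1]_q$ and the index shift $a_hb_{h+1}=\lambda^R_{h+1}$. Your added checks (disjointness of the monomial ranges and the low-order sanity check against $R_0,\dots,R_3$) are sound and only make explicit what the paper leaves implicit.
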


\begin{pro} \label{pro:T*n-weighting}
Let $\T^*_n$ be the set of  weighted bicolored Motzkin paths of length $n$ containing no wavy level steps on the $x$-axis, with a weight function $\rho$ such that for $h\ge 0$,
\begin{itemize}
\item $\rho(\U^{(h)})\in\{1,q,\dots,q^{h}\}\cup\{t^2q^{2h+1},t^2q^{2h+2},\dots,t^2q^{3h+1}\}$,
\item $\rho(\L^{(h)})\in\{tq^{h},tq^{h+1},\dots,tq^{2h}\}$,
\item $\rho(\W^{(h)})\in\{tq^{h},tq^{h+1},\dots,tq^{2h-1}\}$ for $h\ge 1$,
\item $\rho(\D^{(h+1)})\in\{1,q,\dots,q^{h}\}$.
\end{itemize}
Then we have
\[
\sum_{n\ge 0} \rho(\T^{*}_n)x^n = \sum_{n\ge 0} Q_n(t,q)x^n.
\]
\end{pro}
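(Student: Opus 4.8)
The plan is to read the identity off from Flajolet's formula (Theorem~\ref{thm:Flajolet}), applied in the bicolored setting, together with the continued fraction expansion $\sum_{n\ge 0}Q_n(t,q)x^n=\mathfrak{F}(\mu^{Q}_h,\lambda^{Q}_h)$ of Theorem~\ref{thm:QR-continued-fraction}. In a bicolored Motzkin path the two flavours of level step are weighted independently but counted together, so when we match against Theorem~\ref{thm:Flajolet} the down weight polynomial $b_{h+1}$ collects the monomials allowed for $\D^{(h+1)}$, the up weight polynomial $a_h$ collects those allowed for $\U^{(h)}$, and the level weight polynomial $c_h$ is the sum of the monomials allowed for $\L^{(h)}$ and for $\W^{(h)}$. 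Thus the whole proof reduces to computing $a_h$, $b_h$, $c_h$ from the weight scheme of $\T^*_n$ and checking the two identities $c_h=\mu^{Q}_h$ and $a_hb_{h+1}=\lambda^{Q}_{h+1}$.

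Carrying out the computation uses only the geometric sum $1+q+\cdots+q^{m-1}=[m]_q$. For the non-level steps one gets
\[
a_h=[h+1]_q+t^2q^{2h+1}[h+1]_q=(1+t^2q^{2h+1})[h+1]_q,\qquad b_{h+1}=[h+1]_q,
\]
so $a_hb_{h+1}=(1+t^2q^{2h+1})[h+1]_q^2=\lambda^{Q}_{h+1}$, which is correct since $\lambda^{Q}_h=(1+t^2q^{2h-1})[h]_q^2$. For the level steps, $\L^{(h)}$ contributes $tq^h[h+1]_q$ and $\W^{(h)}$ contributes $tq^h[h]_q$ (with $\W^{(0)}$ forbidden, consistent with $[0]_q=0$), hence
\[
c_h=tq^h[h+1]_q+tq^h[h]_q=tq^h([h]_q+[h+1]_q)=\mu^{Q}_h.
\]
Feeding $a_h,b_h,c_h$ into Theorem~\ref{thm:Flajolet} gives $\sum_{n\ge 0}\rho(\T^*_n)x^n=\mathfrak{F}(\mu^{Q}_h,\lambda^{Q}_h)$, and the right side equals $\sum_{n\ge 0}Q_n(t,q)x^n$ by Theorem~\ref{thm:QR-continued-fraction}.

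I expect no genuine obstacle: the verification is routine $q$-integer bookkeeping, and the corresponding proof of Proposition~\ref{pro:Tn-weighting} (matching the $\T_n$ scheme with $\mu^{R}_h,\lambda^{R}_h$) is entirely parallel. The one point that deserves care is the height-$0$ boundary: it is precisely the hypothesis that $\T^*_n$ has no wavy level step on the $x$-axis that forces $c_0=t[1]_q=t=\mu^{Q}_0$, and one should observe that $\mu^{Q}_h$ carries coefficient $2$ on the monomials $tq^h,\dots,tq^{2h-1}$, reflecting that such a level weight can be realised either straight or wavy. Hence in applying Theorem~\ref{thm:Flajolet} one works with the bicolored version of Flajolet's lemma, or equivalently splits $c_h$ into the two $0/1$ pieces $c_h^{\L}$ and $c_h^{\W}$ before invoking the theorem verbatim. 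A final check of the initial polynomials $Q_0,\dots,Q_3$ listed above against the short paths in $\T^*_n$ serves as a sanity test.
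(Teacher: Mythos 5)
Your proposal is correct and follows exactly the route the paper intends: the paper states Proposition~\ref{pro:T*n-weighting} as an immediate reading-off of Theorem~\ref{thm:Flajolet} against the continued fraction of Theorem~\ref{thm:QR-continued-fraction}, and your verification that $a_hb_{h+1}=\lambda^{Q}_{h+1}$ and $c_h=\mu^{Q}_h$ (with the wavy/straight split absorbing the coefficient~$2$ in $\mu^{Q}_h$ and the no-wavy-step-at-height-$0$ condition matching $[0]_q=0$) is precisely the bookkeeping the paper leaves implicit.
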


\noindent
Our second main result is, as Corteel et al. connecting $\M_n$ with $B_n$, to use these set $\T_n$ and $\T^*_n$ of  paths to encode the snakes in $\SS^{00}_{n+1}$ and $\SS^{0}_n$ (Theorem \ref{thm:T->snakes-00} and Theorem \ref{thm:T*->snake-0})

\chapter{Signed Countings on type B and D}

  The results in chapter 4 and 5 are the joint works with Sen-Peng Eu, Tung-Shan Fu and Hsiang-Chun Hsu, part of them had appeared in \cite{EFHL2018}. 
  
  In this chapter we present our type B and D extension of signed counting results. To prove the result, we first apply Corteel et al.'s bijection between $B_n$ and $\M_n$ mentioned in chapter 3 with some adjusts. Then we construct an involution on weighted Motzkin paths which encode the signed counting process combinatorially. Then we compared the set of fixed points of the involutions with the set of paths associated with $Q_n(t,q)$ and $R_n(t,q)$.
\section{Signed Countings on type B and D}

 Our first main result is the type B and type D analogs of
 Eqs.\,(\ref{JV1}) and (\ref{JV2}), with the sign of $\sigma\in B_n$ depending on the parity of one half of the statistic $\fwex(\sigma)$. Amazingly, the signed counting turns out to be related to the derivative polynomials $Q_n(t,q)$ and $R_n(t,q)$.

\begin{thm} \label{thm:signFwexB}
For $n\ge 1$, we have
\begin{enumerate}
\item  ${\displaystyle
\sum_{\sigma\in B_n}(-1)^{\lfloor \frac{\fwex(\sigma)}{2}\rfloor} t^{\nega(\sigma)}q^{\cro_B(\sigma)}
=\begin{cases}
	(-1)^{\frac{n}{2}}(t+1) R_{n-1}(t,q) &\mbox{, if $n$ is odd;}\\
	(-1)^{\frac{n-1}{2}}(t-1) R_{n-1}(t,q) &\mbox{, if $n$ is even.}
\end{cases}
}$
\item ${\displaystyle
\sum_{\sigma\in B_n}(-1)^{\lceil \frac{\fwex(\sigma)}{2}\rceil} t^{\nega(\sigma)}q^{\cro_B(\sigma)}=\begin{cases}
	(-1)^{\frac{n}{2}}(t-1)R_{n-1}(t,q) & \mbox{ if $n$ is even;}\\
	(-1)^{\frac{n+1}{2}}(t+1)R_{n-1}(t,q) & \mbox{ if $n$ is odd.} 
			\end{cases}	.
}$
\end{enumerate}
\end{thm}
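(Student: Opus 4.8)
The plan is to translate the left-hand side of each identity into a weighted count over the bicolored Motzkin paths $\M_n$ via the bijection $\Gamma$ of Theorem~\ref{thm:Corteel}, then to build a sign-reversing involution on $\M_n$ whose surviving fixed points are exactly the paths in a set whose weight generating function is $(t\pm1)R_{n-1}(t,q)$. Since $\fwex(\sigma)=2\wex(\sigma)+\nega(\sigma)$, we have $\lfloor\fwex(\sigma)/2\rfloor=\wex(\sigma)+\lfloor\nega(\sigma)/2\rfloor$ and $\lceil\fwex(\sigma)/2\rceil=\wex(\sigma)+\lceil\nega(\sigma)/2\rceil$, so up to an overall sign depending only on the parity of $\nega$ it suffices to understand $(-1)^{\wex(\sigma)}$ — but it is cleaner to work directly on paths. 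Under $\Gamma$, recall that $y$ tracks $\fwex$ (with each up/level step of the first type carrying $y^2$ and of the second type carrying $yt$, etc.), so substituting $y=\mathrm{i}$ (a primitive fourth root of unity) converts $y^{\fwex(\sigma)}$ into $\mathrm{i}^{\fwex(\sigma)}=(-1)^{\fwex(\sigma)/2}$ when $\fwex$ is even and $\mathrm{i}\cdot(-1)^{(\fwex-1)/2}$ when odd; matching this against $(-1)^{\lfloor\fwex/2\rfloor}$ and $(-1)^{\lceil\fwex/2\rceil}$ will pin down exactly which specialization of $B_n(y,t,q)$ (namely $y=\mathrm{i}$ or $y=-\mathrm{i}$, possibly times a unit) we need, and part~(ii) should follow from part~(i) by a parity bookkeeping argument since $\lceil x\rceil-\lfloor x\rfloor\in\{0,1\}$.

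First I would record the weight scheme of $\M_n$ after the substitution $y=\mathrm{i}$: an up or level step of height $h$ now has weight drawn from $\{-1,-q,\dots,-q^h\}\cup\{\mathrm{i}tq^h,\dots,\mathrm{i}tq^{2h}\}$, a wavy step of height $h\ge1$ from $\{1,q,\dots,q^{h-1}\}\cup\{\mathrm{i}tq^h,\dots,\mathrm{i}tq^{2h-1}\}$, and a down step of height $h+1$ from $\{1,q,\dots,q^h\}\cup\{\mathrm{i}tq^{h+1},\dots,\mathrm{i}tq^{2h+1}\}$. The factors of $\mathrm{i}$ appear only on steps carrying a $t$, and they come in a predictable pattern governed by $\nega$; the factors of $-1$ appear on the ``$y^2$-type'' choices on up and level steps. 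The natural involution is the one that toggles the colour/type of the first level step not on the $x$-axis, or the type of the lowest reversible up–down matching, in such a way that it flips exactly one sign-bearing choice and hence reverses the overall sign while preserving $t$- and $q$-weight. The fixed points are the paths that admit no such toggle: these should be paths with no wavy steps, with a rigid structure on the up/down steps, and it is on these that one reads off the $R_{n-1}(t,q)$ weight scheme of Proposition~\ref{pro:Tn-weighting} (with the extra $(t+1)$ or $(t-1)$ factor coming from the unconstrained choice on, say, the final step or the $x$-axis steps).

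The main obstacle will be designing the involution so that (a) it is genuinely sign-reversing on \emph{all} non-fixed paths — the tricky part is steps at height $0$, where wavy steps are forbidden, and the interplay between an up step and its matching down step both of which may carry independent sign choices — and (b) the set of fixed points matches the $\T_n$-type weight scheme \emph{on the nose}, including the shifted $q$-exponent ranges like $\{t^2q^{2h+2},\dots,t^2q^{3h+2}\}$ for up steps versus the $\M_n$ ranges. Reconciling these exponent windows will likely require the ``restructuring'' of the bicolored Motzkin paths alluded to in the remark after Theorem~\ref{thm:Corteel} (Proposition~\ref{pro:restructure}), i.e. a height-preserving reparametrization of the weight choices, composed \emph{after} the cancelling involution. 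So the proof has three moving parts in order: (1) specialize $\Gamma$ at $y=\pm\mathrm{i}$ and identify the sign $(-1)^{\lfloor\fwex/2\rfloor}$ or $(-1)^{\lceil\fwex/2\rceil}$ with the resulting monomial sign; (2) construct and verify the sign-reversing involution, isolating the fixed-point set; (3) restructure the fixed-point paths to exhibit their generating function as $(t\pm1)R_{n-1}(t,q)$, and then deduce (ii) from (i) by the floor/ceiling parity comparison. I expect step (2), and within it the behaviour at height $0$, to be where essentially all the real work lies.
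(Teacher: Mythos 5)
Your overall strategy coincides with the paper's: specialize $y=\sqrt{-1}$, identify the floor/ceiling sums with ${\rm Re}\pm{\rm Im}$ of $B_n(\sqrt{-1},t,q)$, and kill the non-surviving paths by a sign-reversing involution whose fixed points carry the $R_{n-1}(t,q)$ weight scheme; part (ii) is indeed just ${\rm Re}-{\rm Im}$ in place of ${\rm Re}+{\rm Im}$. But two concrete points in your plan would not survive contact with the details. First, the order of operations matters: the paper applies the restructuring $\Phi$ of Proposition \ref{pro:restructure} \emph{before} any cancellation, turning $\rho(\M_n)$ into $(y^2+yt)\rho(\H_{n-1})$ by peeling off the weight of the \emph{first} step (which is forced to be $\U^{(0)}$ or $\L^{(0)}$ with weight $y^2$ or $yt$). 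That prefactor, evaluated at $y=\sqrt{-1}$ and combined with the $y^{n-1}$ coming from the fixed points, is the sole source of the $(t\pm 1)$; it does not come from ``the final step or the $x$-axis steps'' of the fixed-point paths, and it also explains why the answer involves $R_{n-1}$ (length $n-1$) rather than an $R_n$-type object: the involution lives on $\H_{n-1}$, not on $\M_n$. Running the involution first on $\M_n$ and restructuring afterwards would require the fixed-point set to be stable under $\Phi$ and the first step to be handled separately inside the involution, which is exactly the complication $\Phi$ is designed to remove.

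Second, your guess for the fixed points is wrong. In the paper's $\F_{n-1}$ wavy level steps are \emph{not} forbidden; rather, every level step (straight or wavy) must carry a $yt$-type weight, and every matching pair $(\U^{(h)},\D^{(h+1)})$ must carry either $(y^2q^a,q^b)$ or $(ytq^{h+1+a},ytq^{h+1+b})$. The involution (Algorithm A) toggles a straight level step of weight $q^a$ with a wavy one of weight $y^2q^a$, and swaps the ``mixed'' pair weights $(y^2q^a,ytq^{h+1+b})\leftrightarrow(ytq^{h+1+a},q^b)$; in every case the total weight changes by exactly $y^{\pm 2}=-1$ at $y=\sqrt{-1}$, and the fixed points are precisely the paths whose weight is a monomial times $y^{n-1}$ (each surviving pair contributes $y^2$, each surviving level step contributes $y$), which is what makes Lemma \ref{lem:Fn=y^nRn} work. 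Your proposal defers the construction of the involution entirely and, as stated, would steer you toward a fixed-point set that does not match $\T_{n-1}$; so while the skeleton is right, the load-bearing step is missing and the one structural hint you give about it points the wrong way.
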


\begin{cor} \label{thm:signFwexD}
For $n\ge 1$, we have
\begin{align*}
&\sum_{\sigma\in D_n}(-1)^{\lfloor\frac{\fwex(\sigma)}{2}\rfloor} t^{\nega(\sigma)}q^{\cro_B(\sigma)}
     =\sum_{\sigma\in D_n}(-1)^{\lceil\frac{\fwex(\sigma)}{2}\rceil} t^{\nega(\sigma)}q^{\cro_B(\sigma)}\\
    =&\left\{\begin{array}{ll} (-1)^\frac{n}{2}tR_{n-1}(t,q)  & \mbox{ if $n$ is even,}\\
	                           (-1)^\frac{n+1}{2}R_{n-1}(t,q) & \mbox{ if $n$ is odd.}
             \end{array}
      \right.
\end{align*}
\end{cor}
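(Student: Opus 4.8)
The plan is to obtain Corollary~\ref{thm:signFwexD} as a formal consequence of Theorem~\ref{thm:signFwexB}, so that no new combinatorics on signed permutations is needed. The only structural input is the partition
\[
B_n = D_n\;\sqcup\;(B_n\setminus D_n),
\]
where $D_n$ is the set of signed permutations whose window notation has an \emph{even} number of negative entries, and $B_n\setminus D_n$ the set of those with an \emph{odd} number.

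First I would record the elementary congruence $\fwex(\sigma)=2\wex(\sigma)+\nega(\sigma)\equiv\nega(\sigma)\pmod 2$. Hence $\fwex(\sigma)$ is even for every $\sigma\in D_n$, so that $\lfloor\fwex(\sigma)/2\rfloor=\lceil\fwex(\sigma)/2\rceil$; the two sums over $D_n$ appearing in the Corollary thus have identical summands term by term, which is exactly its first equality. On $B_n\setminus D_n$, by contrast, $\fwex(\sigma)$ is odd, so $\lceil\fwex(\sigma)/2\rceil=\lfloor\fwex(\sigma)/2\rfloor+1$, and therefore the sum of $(-1)^{\lceil\fwex/2\rceil}t^{\nega}q^{\croB}$ over $B_n\setminus D_n$ is the negative of the sum of $(-1)^{\lfloor\fwex/2\rfloor}t^{\nega}q^{\croB}$ over the same set.

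Next I would set $a:=\sum_{\sigma\in D_n}(-1)^{\lfloor\fwex(\sigma)/2\rfloor}t^{\nega(\sigma)}q^{\croB(\sigma)}$ and $b:=\sum_{\sigma\in B_n\setminus D_n}(-1)^{\lfloor\fwex(\sigma)/2\rfloor}t^{\nega(\sigma)}q^{\croB(\sigma)}$, and split the left-hand sides of Theorem~\ref{thm:signFwexB}(i) and (ii) along the above partition. Part~(i) then says that $a+b$ equals the right-hand side displayed in Theorem~\ref{thm:signFwexB}(i), while part~(ii), after using the sign relation on $B_n\setminus D_n$ and the coincidence of the two statistics on $D_n$, says that $a-b$ equals the right-hand side displayed in Theorem~\ref{thm:signFwexB}(ii). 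Adding these two identities gives $2a$ as the sum of the two right-hand sides of Theorem~\ref{thm:signFwexB}; inserting their explicit values and simplifying --- the factors $(t+1)$ and $(t-1)$ reinforce to $2t$ when $n$ is even and cancel to $\mp2$ when $n$ is odd, with the powers of $-1$ carried along --- leaves $a=(-1)^{n/2}\,t\,R_{n-1}(t,q)$ for even $n$ and $a=(-1)^{(n+1)/2}R_{n-1}(t,q)$ for odd $n$, which is the assertion.

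I do not expect a real obstacle, since Theorem~\ref{thm:signFwexB} supplies all the analytic content; the one step requiring attention is the sign bookkeeping --- tracking the prefactors $(-1)^{\lfloor\fwex/2\rfloor}$ together with the global signs in Theorem~\ref{thm:signFwexB} through the even and odd cases of $n$ --- so that the $(t\pm1)$ factors are combined with the right signs. (Alternatively, one could instead restrict the path involution used to prove Theorem~\ref{thm:signFwexB} to the sub-family of bicolored Motzkin paths corresponding to $D_n$ and re-examine its fixed points, but the argument above is shorter.)
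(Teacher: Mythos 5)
Your proof is correct, and it takes a genuinely different route from the paper's. The paper does not deduce the corollary formally from Theorem \ref{thm:signFwexB}; instead it re-enters the Motzkin-path machinery: it notes that $\Gamma$ restricts to a bijection from $D_n$ onto the subset of $\M_n$ with even $t$-power, writes the resulting generating function as $y^2\rho(\H^{(2)}_{n-1})+yt\rho(\H^{(1)}_{n-1})$, shows (Lemma \ref{lem:induced-map-psi1}) that the involution $\Psi_1$ preserves the $t$-parity and that fixed points in $\F_{n-1}$ have $t$-power of the same parity as $n-1$, and then evaluates at $y=\sqrt{-1}$, with one of the two pieces vanishing according to the parity of $n$. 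Your argument --- splitting $B_n=D_n\sqcup(B_n\setminus D_n)$, using $\fwex(\sigma)\equiv\nega(\sigma)\pmod 2$ to see that the floor and ceiling statistics agree on $D_n$ and that the two signs are opposite on the complement, and then solving the resulting $2\times 2$ system from parts (i) and (ii) of Theorem \ref{thm:signFwexB} --- is shorter, purely formal, and needs no new combinatorics; it is essentially the ``alternative'' you set aside, run in the opposite direction. What the paper's route buys is a finer statement (it identifies which $t$-parity class survives, a structure it reuses for Corollary \ref{thm: signFwexD*}), but for the assertion at hand your derivation is complete. One caution on the sign bookkeeping you yourself flag: as printed, the two cases of Theorem \ref{thm:signFwexB}(i) have their ``$n$ odd''/``$n$ even'' labels interchanged (otherwise the exponents $\tfrac n2$ and $\tfrac{n-1}2$ would be non-integral); your computation, in which $(t+1)$ and $(t-1)$ combine to $2t$ for $n$ even and to $\mp 2$ for $n$ odd, implicitly uses the corrected labelling, which is the one consistent with the paper's own proof of that theorem and with small cases such as $n=1$.
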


\begin{thm} \label{thm:signFwexB*}
For $n\ge 1$, we have
\begin{enumerate}
\item ${\displaystyle
\sum_{\sigma\in B_n^{*}} \left(-\frac{1}{q}\right)^{\lfloor\frac{\fwex(\sigma)}{2}\rfloor} t^{\nega(\sigma)}q^{\cro_B(\sigma)}
=\left(-\frac{1}{q}\right)^{\lfloor\frac{n}{2}\rfloor}Q_n(t,q).
}$
\item ${\displaystyle
\sum_{\sigma\in B_n^{*}} \left(-\frac{1}{q}\right)^{\lceil\frac{\fwex(\sigma)}{2}\rceil} t^{\nega(\sigma)}q^{\cro_B(\sigma)}
=\left(-\frac{1}{q}\right)^{\lceil\frac{n}{2}\rceil}Q_n(t,q).
}$
\end{enumerate}
\end{thm}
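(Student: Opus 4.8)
The plan is to adapt the proof scheme announced for the companion Theorem~\ref{thm:signFwexB}: transport the left-hand sides of (i) and (ii) along the bijection $\Gamma$ of Theorem~\ref{thm:Corteel}, cancel most terms by a single sign-reversing involution on weighted bicolored Motzkin paths, and identify the surviving paths with the model $\T^*_n$ of Proposition~\ref{pro:T*n-weighting}, whose weight enumerator is $Q_n(t,q)$. One involution will serve both parts, because each cancelling move will change $\fwex$ by exactly $\pm 2$ — hence change $\lfloor\fwex/2\rfloor$ and $\lceil\fwex/2\rceil$ alike by $\pm 1$ — while shifting a single power of $q$ by the same amount.

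First I would pass to the path side. Under $\Gamma$ the exponent of $y$ in $\rho(\mu)$ is $\fwex(\sigma)$, the exponent of $t$ is $\nega(\sigma)$, and the exponent of $q$ is $\cro_B(\sigma)$; moreover $\fwex=2\wex+\nega$, where $\wex$ counts the steps of $\mu$ carrying a $y^2$-monomial and $\nega$ the steps carrying a $yt$-monomial. The window fixed points $\sigma_i=i$ of a signed permutation correspond under $\Gamma$ to a distinguished family of straight level steps, so restricting $\Gamma$ gives a bijection between $B_n^*$ and the subset $\M^*_n\subseteq\M_n$ in which those steps are forbidden. Writing $\overline\rho$ for $\rho$ with the variable $y$ erased, part (i) becomes
\[
\sum_{\mu\in\M^*_n}\left(-\tfrac1q\right)^{\lfloor\fwex(\mu)/2\rfloor}\overline\rho(\mu)=\left(-\tfrac1q\right)^{\lfloor n/2\rfloor}\rho(\T^*_n),
\]
and part (ii) is the same identity with both ceilings in place of floors.

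The heart of the proof is a sign-reversing involution $\Phi$ on $\M^*_n$ whose only moves change $\wex$ by $\pm1$ and $\cro_B$ by the same $\pm1$ while leaving $\nega$ fixed. The basic move replaces a straight level step of weight $y^2q^k$ at a height $h\ge 1$ by the wavy level step of weight $q^{k-1}$ at the same height, and conversely; comparing the weight ranges of $\M_n$ shows these two families are in bijection for each $h\ge 1$, so the pairing is well defined, and to it one adjoins the analogous toggles trading a $y^2$-weighted up (resp.\ down) step for a plain down (resp.\ up) step together with a rerouting of its matching step. Scanning a path from a fixed end and applying the first available toggle yields an involution; since a move with $\Delta\wex=\Delta\cro_B=\varepsilon\in\{+1,-1\}$ and $\Delta\nega=0$ multiplies $\left(-\tfrac1q\right)^{\lfloor\fwex/2\rfloor}\overline\rho$ (and likewise $\left(-\tfrac1q\right)^{\lceil\fwex/2\rceil}\overline\rho$) by $\left(-\tfrac1q\right)^{\varepsilon}q^{\varepsilon}=-1$, the paths paired by $\Phi$ cancel in both sums. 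One then checks $\Phi^2=\mathrm{id}$ and that $\Phi$ respects the defining constraints of $\M^*_n$ (no forbidden fixed-point level step, no wavy step on the $x$-axis).

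Finally I would analyze the fixed points of $\Phi$, i.e.\ the paths admitting no toggle. These should be exactly the paths with $\fwex=n$ in which every $y$-carrying step is suitably paired — in particular each $yt$-weighted up step is closed by a $yt$-weighted down step; the value $\fwex=n$ makes $\lfloor\fwex/2\rfloor=\lfloor n/2\rfloor$ and $\lceil\fwex/2\rceil=\lceil n/2\rceil$ constant over the fixed points, so the sign pulls out of the sum. It then remains to exhibit a weight-preserving bijection from this fixed-point set onto $\T^*_n$, obtained by redistributing along each arch the $t$- and $q$-weights of a matched $yt$-up/$yt$-down pair into a $t^2$-weighted up step and a plain down step (concretely, the two $q$-exponents shift by $q^aq^b\mapsto q^{a+h+1}q^{b-h-1}$, which carries the $\M^*_n$-ranges onto the ranges in Proposition~\ref{pro:T*n-weighting}), after the path restructuring in the spirit of the remark near Theorem~\ref{thm:Corteel}; then $\sum_{\text{fixed }\mu}\overline\rho(\mu)=\rho(\T^*_n)=Q_n(t,q)$ and both parts follow. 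The main obstacle is precisely this last step together with the exact specification of the toggles: one must arrange the moves and the scanning order so that, after the erasure of $y$ and the $q$-shifts forced by the cancellation, the residual paths realise the weight scheme of $\T^*_n$ on the nose — getting the low-height steps and the two path restrictions to cooperate is the delicate point — rather than merely a superficially similar scheme.
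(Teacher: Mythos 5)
Your proposal follows essentially the same route as the paper: restrict $\Gamma$ to a bijection $B_n^*\to\M^*_n$, cancel via a sign-reversing involution whose moves multiply the path weight by $(y^2q)^{\pm1}$ (the paper's Algorithm B, whose step (B1) is exactly your level-step toggle and whose step (B2) realizes your matching-pair toggle by swapping only the \emph{weights} of a matching up/down pair, with no rerouting of steps), and identify the fixed-point set --- on which $\fwex=n$ --- with $\T^*_n$ so that its enumerator is $y^nQ_n(t,q)$. The only cosmetic differences are that the paper extracts the sign by evaluating at $y=\sqrt{-1/q}$ and splitting into real and imaginary parts rather than by your direct $\pm1$ bookkeeping, and it matches the fixed points to $\T^*_n$ by comparing continued fractions rather than by an explicit weight redistribution.
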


\begin{cor} \label{thm: signFwexD*}
For $n\ge 1$, we have
\begin{align*}
 &\sum_{\sigma\in D_n^{*}} \left(-\frac{1}{q}\right)^{\lfloor\frac{\fwex(\sigma)}{2}\rfloor} t^{\nega(\sigma)}q^{\cro_B(\sigma)}
     =\sum_{\sigma\in D_n^{*}} \left(-\frac{1}{q}\right)^{\lceil\frac{\fwex(\sigma)}{2}\rceil} t^{\nega(\sigma)}q^{\cro_B(\sigma)}\\
     =&\left\{\begin{array}{ll} \big(-\frac{1}{q}\big)^\frac{n}{2}Q_{n}(t,q) & \mbox{ if $n$ is even,}\\
	                            0 & \mbox{ if $n$ is odd.}
             \end{array}
      \right.
\end{align*}
\end{cor}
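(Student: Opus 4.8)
The plan is to obtain this corollary as a purely formal consequence of Theorem~\ref{thm:signFwexB*}, in exactly the same way Corollary~\ref{thm:signFwexD} should follow from Theorem~\ref{thm:signFwexB}; no new bijection or involution on $D_n^*$ itself is required. I would begin with a parity remark. By definition, a permutation in $D_n^*$ has an even number of negative entries in its window notation, so $\nega(\sigma)$ is even for every $\sigma\in D_n^*$, and hence so is $\fwex(\sigma)=2\wex(\sigma)+\nega(\sigma)$. Therefore $\lfloor\fwex(\sigma)/2\rfloor=\lceil\fwex(\sigma)/2\rceil=\fwex(\sigma)/2$ on $D_n^*$, which already establishes the first equality in the corollary: the floor and ceiling sums over $D_n^*$ agree term by term. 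It then remains to evaluate the common value $F:=\sum_{\sigma\in D_n^*}\bigl(-\tfrac1q\bigr)^{\fwex(\sigma)/2}t^{\nega(\sigma)}q^{\cro_B(\sigma)}$.

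Next I would exploit the disjoint decomposition $B_n^*=D_n^*\sqcup(B_n^*\setminus D_n^*)$. For $\sigma\in B_n^*\setminus D_n^*$ the number $\nega(\sigma)$ is odd, so $\fwex(\sigma)$ is odd and $\lceil\fwex(\sigma)/2\rceil=\lfloor\fwex(\sigma)/2\rfloor+1$. Setting $G:=\sum_{\sigma\in B_n^*\setminus D_n^*}\bigl(-\tfrac1q\bigr)^{\lfloor\fwex(\sigma)/2\rfloor}t^{\nega(\sigma)}q^{\cro_B(\sigma)}$, the sum in part (i) of Theorem~\ref{thm:signFwexB*} splits as $F+G$ and the sum in part (ii) splits as $F+(-\tfrac1q)G$, so the theorem gives the linear system
\[
F+G=\Bigl(-\tfrac1q\Bigr)^{\lfloor n/2\rfloor}Q_n(t,q),\qquad F+\Bigl(-\tfrac1q\Bigr)G=\Bigl(-\tfrac1q\Bigr)^{\lceil n/2\rceil}Q_n(t,q).
\]
Subtracting, $\bigl(1+\tfrac1q\bigr)G=\bigl[\bigl(-\tfrac1q\bigr)^{\lfloor n/2\rfloor}-\bigl(-\tfrac1q\bigr)^{\lceil n/2\rceil}\bigr]Q_n(t,q)$, and since $1+\tfrac1q$ is a nonzero rational function this determines $G$, and then $F$.

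Finally I would separate on the parity of $n$. If $n$ is even, $\lfloor n/2\rfloor=\lceil n/2\rceil=n/2$, so the bracket vanishes, forcing $G=0$ and $F=\bigl(-\tfrac1q\bigr)^{n/2}Q_n(t,q)$. If $n$ is odd, $\lceil n/2\rceil=\lfloor n/2\rfloor+1$, so the bracket equals $\bigl(1+\tfrac1q\bigr)\bigl(-\tfrac1q\bigr)^{\lfloor n/2\rfloor}$, giving $G=\bigl(-\tfrac1q\bigr)^{\lfloor n/2\rfloor}Q_n(t,q)$ and hence $F=0$; this is exactly the stated dichotomy. The only substantive ingredient is Theorem~\ref{thm:signFwexB*} itself, which we are free to assume; beyond that the argument is just the parity observation together with solving a $2\times2$ linear system, so I do not anticipate a real obstacle — the single point that requires care is the behaviour of $\lfloor\cdot\rfloor$ versus $\lceil\cdot\rceil$ on the two blocks $D_n^*$ and $B_n^*\setminus D_n^*$, which is precisely what the parity of $\nega$ controls.
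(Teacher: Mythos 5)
Your argument is correct, and it reaches the corollary by a genuinely different route from the paper. The paper stays inside the path machinery: it restricts the bijection $\Gamma$ to get $D_n^*$ in correspondence with the subset $\M^{*\prime}_n\subset\M^{*}_n$ of paths whose weights carry even powers of $t$, observes via Proposition~\ref{pro:map-psi2} that the involution $\Psi_2$ preserves the $t$-degree (hence restricts to a sign-reversing involution on $\M^{*\prime}_n$), and then uses the fact that every fixed point $\mu\in\G_n$ has $t$-degree of the same parity as $n$ — so the surviving weight is all of $\rho(\G_n)=y^nQ_n(t,q)$ when $n$ is even and nothing when $n$ is odd. You instead treat Theorem~\ref{thm:signFwexB*} as a black box, split $B_n^*=D_n^*\sqcup(B_n^*\setminus D_n^*)$ by the parity of $\nega$ (equivalently of $\fwex$), and solve the resulting $2\times 2$ system for $F$ and $G$; the division by $1+\tfrac1q$ is harmless since all quantities are Laurent polynomials and the identity is checked in the field of rational functions. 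Both proofs use the same initial parity observation $\lfloor\fwex(\sigma)/2\rfloor=\lceil\fwex(\sigma)/2\rceil$ on $D_n^*$. What your version buys is economy: it needs only the \emph{statements} of parts (i) and (ii) of Theorem~\ref{thm:signFwexB*}, with no further appeal to paths or involutions, and it additionally produces the complementary evaluation $G$ over $B_n^*\setminus D_n^*$ for free. What the paper's version buys is a combinatorial localization of the cancellation — it identifies exactly which weighted paths survive and why — which is in keeping with the bijective spirit of the chapter and mirrors the proof of Corollary~\ref{thm:signFwexD}. Your quantity $G$ is precisely the contribution of the odd-$t$-degree paths, so the two arguments are detecting the same phenomenon by different means.
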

\noindent
Setting $t=1$ and $q=1$, we obtain types B and D extensions of the results in Eqs.\,(\ref{Eulercan1}) and (\ref{Eulercan2}).

\begin{cor} \label{cor:(-1)-eval-Bn}
For $n\ge 1$, we have
\begin{enumerate}
\item ${\displaystyle
\sum_{\sigma\in B_n}(-1)^{\lfloor\frac{\fwex(\sigma)}{2}\rfloor}
      =\left\{\begin{array}{ll} (-1)^\frac{n}{2}2^nE_n & \mbox{ if $n$ is even,}\\
	                             0 &  \mbox{ if $n$ is odd.}
              \end{array}
       \right.
}$
\item ${\displaystyle
\sum_{\sigma\in B_n}(-1)^{\lceil \frac{\fwex(\sigma)}{2}\rceil} =\begin{cases}
		0 						   & \mbox{ if $n$ is even;}\\
		(-1)^{\frac{n+1}{2}}2^nE_n & \mbox{ if $n$ is odd.} 
	\end{cases}
}$
\item ${\displaystyle
\sum_{\sigma\in D_n}(-1)^{\lfloor\frac{\fwex(\sigma)}{2}\rfloor}=\sum_{\sigma\in D_n}(-1)^{\lceil\frac{\fwex(\sigma)}{2}\rceil}=(-1)^{\lfloor\frac{n+1}{2}\rfloor}2^{n-1} E_n.
}$
\end{enumerate}
\end{cor}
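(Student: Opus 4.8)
The plan is to obtain all three identities as the $t=1$, $q=1$ specializations of the already-established Theorem \ref{thm:signFwexB} and Corollary \ref{thm:signFwexD}. The single external ingredient is the evaluation $R_m(1,1)=2^mE_{m+1}$. To justify it, observe that putting $q=1$ in the $q$-derivative of Eq.~(\ref{q-derivative}) returns the ordinary derivative operator, so $R_m(t,1)=(D+DUU)^m1$ is exactly the classical derivative polynomial $R_m(t)$ of $\sec^2 x$; Josuat-Verg\`{e}s' identity $R_m(1)=2^mE_{m+1}$ then gives $R_m(1,1)=2^mE_{m+1}$, and in particular $R_{n-1}(1,1)=2^{n-1}E_n$.

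For items (1) and (2) I would simply set $t=q=1$ in the two parts of Theorem \ref{thm:signFwexB}. The prefactor $t+1$ collapses to $2$ and $t-1$ collapses to $0$, so for each parity of $n$ exactly one of the two cases survives: it evaluates to $\pm 2\cdot R_{n-1}(1,1)=\pm 2^nE_n$, while the other is $0$. Matching the surviving case with the parity condition recorded in Theorem \ref{thm:signFwexB} and reading off the attached sign $(-1)^{n/2}$ or $(-1)^{(n+1)/2}$ yields precisely the right-hand sides claimed in (1) and (2).

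For item (3) I would set $t=q=1$ in Corollary \ref{thm:signFwexD}: there the right-hand side is $(-1)^{n/2}tR_{n-1}(t,q)$ when $n$ is even and $(-1)^{(n+1)/2}R_{n-1}(t,q)$ when $n$ is odd, so both specialize to $(-1)^{\lceil n/2\rceil}2^{n-1}E_n$. Since $\lceil n/2\rceil=\lfloor(n+1)/2\rfloor$ for every $n\ge 1$, this is the asserted common value $(-1)^{\lfloor(n+1)/2\rfloor}2^{n-1}E_n$, and the coincidence of the floor- and ceiling-weighted sums over $D_n$ is already part of Corollary \ref{thm:signFwexD}.

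Since the whole argument is a substitution into previously proved statements, there is essentially no real obstacle; the only point needing attention is the sign bookkeeping --- verifying that the parity-split prefactors of Theorem \ref{thm:signFwexB} and Corollary \ref{thm:signFwexD} collapse to the uniform exponents $(-1)^{n/2}$, $(-1)^{(n+1)/2}$ and $(-1)^{\lfloor(n+1)/2\rfloor}$ appearing in the corollary, via the elementary identity $\lceil n/2\rceil=\lfloor(n+1)/2\rfloor$ and the observation that $(t-1)|_{t=1}=0$ annihilates the non-contributing case.
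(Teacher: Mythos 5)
Your proposal is correct and is precisely the paper's own argument: the paper offers no separate proof of Corollary \ref{cor:(-1)-eval-Bn} beyond the remark that it follows by setting $t=1$ and $q=1$ in Theorem \ref{thm:signFwexB} and Corollary \ref{thm:signFwexD}, together with the evaluation $R_{n-1}(1,1)=R_{n-1}(1)=2^{n-1}E_n$ that you supply via Josuat-Verg\`es' result. One small caution: the two cases of Theorem \ref{thm:signFwexB}(i) are printed with their odd/even labels transposed (compare the computation at the end of its proof, where the factor $t+1$ arises for even $n$), so ``matching the surviving case with the parity condition recorded in the theorem'' must be done against the corrected labels, or the literal substitution would place the nonzero value on the wrong parity in item (i).
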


\begin{cor} \label{cor:(-1)-eval-Bn*}
For $n\ge 1$, we have
\begin{enumerate}
\item ${\displaystyle
\sum_{\sigma\in B_n^{*}}(-1)^{\lfloor\frac{\fwex(\sigma)}{2}\rfloor}=(-1)^{\lfloor\frac{n}{2}\rfloor} S_n.
}$
\item ${\displaystyle
\sum_{\sigma\in B_n^*}(-1)^{\lceil \frac{\fwex(\sigma)}{2}\rceil}=(-1)^{\lceil\frac{n}{2}\rceil}S_n}$.
\item ${\displaystyle
\sum_{\sigma\in D_n^{*}}(-1)^{\lfloor\frac{\fwex(\sigma)}{2}\rfloor}
	  =\sum_{\sigma\in D_n^{*}}(-1)^{\lceil\frac{\fwex(\sigma)}{2}\rceil}
      =\left\{\begin{array}{ll} (-1)^\frac{n}{2} S_n & \mbox{ if $n$ is even,}\\
	                            0 & \mbox{ if $n$ is odd.}
              \end{array}
      \right.
}$
\end{enumerate}
\end{cor}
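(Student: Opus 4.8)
The plan is to obtain Corollary~\ref{cor:(-1)-eval-Bn*} as the $t=q=1$ specialization of Theorem~\ref{thm:signFwexB*} and Corollary~\ref{thm: signFwexD*}, so the only real content is identifying the right-hand sides after the substitution.

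First I would observe that when $q=1$ the $q$-derivative in~(\ref{q-derivative}) degenerates to the ordinary derivative, while $U$ (multiplication by $t$) is unchanged; hence the algebraic definition $Q_n(t,q)=(D+UDU)^n 1$ specializes at $q=1$ to $Q_n(t,1)=(D+UDU)^n 1=Q_n(t)$, the derivative polynomial of $\sec$. In particular $Q_n(1,1)=Q_n(1)$, and by Hoffman's identity $Q_n(1)=S_n$ recalled in the excerpt, we get $Q_n(1,1)=S_n$.

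Now I set $t=1$ and $q=1$ in Theorem~\ref{thm:signFwexB*}. In part~(i) the factor $\big(-\tfrac1q\big)^{\lfloor\fwex(\sigma)/2\rfloor}$ becomes $(-1)^{\lfloor\fwex(\sigma)/2\rfloor}$ and $t^{\nega(\sigma)}q^{\cro_B(\sigma)}$ becomes $1$, while the right-hand side becomes $(-1)^{\lfloor n/2\rfloor}Q_n(1,1)=(-1)^{\lfloor n/2\rfloor}S_n$; this is statement~(i). Statement~(ii) follows identically with $\lfloor\cdot\rfloor$ replaced by $\lceil\cdot\rceil$. For statement~(iii) I set $t=q=1$ in Corollary~\ref{thm: signFwexD*}: both sums collapse to $\sum_{\sigma\in D_n^{*}}(-1)^{\lfloor\fwex(\sigma)/2\rfloor}$ and $\sum_{\sigma\in D_n^{*}}(-1)^{\lceil\fwex(\sigma)/2\rceil}$, and the right-hand side becomes $(-1)^{n/2}S_n$ when $n$ is even and $0$ when $n$ is odd.

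Since this is a pure specialization, there is no genuine obstacle; the one point worth a line of care is that the exponents $\lfloor n/2\rfloor$ and $\lceil n/2\rceil$ in Theorem~\ref{thm:signFwexB*} survive the substitution unchanged, so the two parts of Corollary~\ref{cor:(-1)-eval-Bn*} genuinely differ --- they agree up to the sign $(-1)^{\lceil n/2\rceil-\lfloor n/2\rfloor}$, which equals $-1$ exactly when $n$ is odd. All the substantive work has already been carried out in establishing Theorem~\ref{thm:signFwexB*} and Corollary~\ref{thm: signFwexD*}.
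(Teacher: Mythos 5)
Your proposal is correct and is exactly the route the paper takes: the paper derives this corollary by the one-line remark ``Setting $t=1$ and $q=1$'' applied to Theorem~\ref{thm:signFwexB*} and Corollary~\ref{thm: signFwexD*}, together with Hoffman's identity $Q_n(1)=S_n$ (via $Q_n(t,1)=Q_n(t)$, since the $q$-derivative reduces to the ordinary derivative at $q=1$). Your extra remark about the exponents $\lfloor n/2\rfloor$ versus $\lceil n/2\rceil$ is a sensible sanity check but adds nothing beyond what the specialization already gives.
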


Subtracting Corollary \ref{cor:(-1)-eval-Bn}(i) with \ref{cor:(-1)-eval-Bn*}{i} and Corollary \ref{cor:(-1)-eval-Bn}(ii) with Corollary \ref{cor:(-1)-eval-Bn*}(ii), and applying Hoffman's result $P_n(1)-Q_n(1)=S_n^D$, we obtain the following identities of Springer numbers of type D.
\begin{cor} \label{cor:sign-S_n^D}
For $n\ge 1$, we have
\begin{enumerate}
\item ${\displaystyle
\sum_{\sigma\in B_n-B_n^{*}}(-1)^{\lfloor\frac{\fwex(\sigma)}{2}\rfloor}
	=\left\{\begin{array}{ll} 
		(-1)^\frac{n}{2}S_n^D 		&  \mbox{ if $n$ is even,}\\
	    (-1)^{\frac{n+1}{2}}S_n		&  \mbox{ if $n$ is odd.}
              \end{array}
       \right.
}$
\item ${\displaystyle
\sum_{\sigma\in B_n-B_n^{*}}(-1)^{\lceil\frac{\fwex(\sigma)}{2}\rceil}
	=\left\{\begin{array}{ll} 
		(-1)^{\frac{n}{2}+1}S_n 		&  \mbox{ if $n$ is even,}\\
	    (-1)^{\frac{n+1}{2}}S_n^D		&  \mbox{ if $n$ is odd.}
              \end{array}
       \right.
}$
\end{enumerate}
\end{cor}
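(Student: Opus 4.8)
The plan is to obtain Corollary~\ref{cor:sign-S_n^D} as a direct arithmetic consequence of the two preceding corollaries, exactly as the surrounding text suggests; no new combinatorics is needed. Since $B_n^{*}\subseteq B_n$, each sum over $B_n-B_n^{*}$ splits as
\[
\sum_{\sigma\in B_n-B_n^{*}}(-1)^{\lfloor\fwex(\sigma)/2\rfloor}
=\sum_{\sigma\in B_n}(-1)^{\lfloor\fwex(\sigma)/2\rfloor}-\sum_{\sigma\in B_n^{*}}(-1)^{\lfloor\fwex(\sigma)/2\rfloor},
\]
and likewise for the $\lceil\cdot\rceil$-variant. So the first step is simply to substitute the closed forms from Corollary~\ref{cor:(-1)-eval-Bn} and Corollary~\ref{cor:(-1)-eval-Bn*} into the two halves of each difference.

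Next I would split into the two parity cases and simplify. For part~(i) with $n$ even, Corollary~\ref{cor:(-1)-eval-Bn}(i) contributes $(-1)^{n/2}2^nE_n$ and Corollary~\ref{cor:(-1)-eval-Bn*}(i) contributes $(-1)^{\lfloor n/2\rfloor}S_n=(-1)^{n/2}S_n$, giving the difference $(-1)^{n/2}(2^nE_n-S_n)$; for $n$ odd the $B_n$-sum vanishes, leaving $-(-1)^{(n-1)/2}S_n=(-1)^{(n+1)/2}S_n$. Part~(ii) runs the same way: for $n$ even the $B_n$-sum vanishes and the difference is $-(-1)^{n/2}S_n=(-1)^{n/2+1}S_n$, while for $n$ odd it is $(-1)^{(n+1)/2}(2^nE_n-S_n)$. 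The only point needing a moment's care is that the $\lfloor\cdot\rfloor$ and $\lceil\cdot\rceil$ exponents attached to $S_n$ in Corollary~\ref{cor:(-1)-eval-Bn*} collapse to $(-1)^{n/2}$ or $(-1)^{(n-1)/2}$ according to the parity of $n$, which is immediate, and that these are paired against the correct halves of Corollary~\ref{cor:(-1)-eval-Bn}.

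The final step is to recognize $2^nE_n-S_n=S_n^D$: by Hoffman's results quoted in Chapter~2 we have $P_n(1)=2^nE_n$, $Q_n(1)=S_n$, and $P_n(1)-Q_n(1)=S_n^D$, so subtracting converts the two ``even'' outputs of part~(i) and the two ``odd'' outputs of part~(ii) into the stated $S_n^D$ form. There is no genuine obstacle here — the real content lives in Theorems~\ref{thm:signFwexB}--\ref{thm: signFwexD*} and their $t=q=1$ specializations — so the ``hard part'' is merely the bookkeeping of making sure each parity case and each choice of floor versus ceiling is matched correctly between the $B_n$ and $B_n^{*}$ sums.
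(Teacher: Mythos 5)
Your proposal is correct and is exactly the paper's argument: the paper derives Corollary \ref{cor:sign-S_n^D} in one sentence by subtracting Corollary \ref{cor:(-1)-eval-Bn*} from Corollary \ref{cor:(-1)-eval-Bn} part by part and invoking Hoffman's identity $P_n(1)-Q_n(1)=S_n^D$ with $P_n(1)=2^nE_n$ and $Q_n(1)=S_n$. Your case-by-case bookkeeping of the signs and of floor versus ceiling matches the stated result in all four cases.
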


\section{The cases of $B_n$ and $D_n$}

In this section we present a combinatorial proof of Theorem \ref{thm:signFwexB} and Corollary \ref{thm:signFwexD}, via a sign-reversing involution on corresponding set of paths.
First, notice that plugging in $y=\sqrt{-1}$ in $B_n(y,t,q)$, we obtain
\begin{align*}
	B_n(\sqrt{-1},t,q)
   &=\sum_{\substack{\sigma\in B_n \\ 2|\fwex(\sigma)}}(\sqrt{-1})^{\fwex(\sigma)} t^{\nega(\sigma)}q^{\cro_B(\sigma)}+\sum_{\substack{\sigma\in B_n \\ 2\nmid \fwex(\sigma)}}(\sqrt{-1})^{\fwex(\sigma)} t^{\nega(\sigma)}q^{\cro_B(\sigma)}\\
 &=\sum_{\substack{\sigma\in B_n \\ 2|\fwex(\sigma)}}(-1)^{\frac{\fwex(\sigma)}{2}} t^{\nega(\sigma)}q^{\cro_B(\sigma)}+\sqrt{-1}\sum_{\substack{\sigma\in B_n \\ 2\nmid \fwex(\sigma)}}(-1)^{\frac{\fwex(\sigma)-1}{2}} t^{\nega(\sigma)}q^{\cro_B(\sigma)}.
\end{align*}
Then it is easy to see that 
\[
	\sum_{\sigma\in B_n}(-1)^{\lfloor\frac{\fwex(\sigma)}{2}\rfloor}t^{\nega(\sigma)}q^{\croB(\sigma)}
	={\rm Re}(B_n(\sqrt{-1},t,q))+{\rm Im}(B_n(\sqrt{-1},t,q))
\]
and
\[
	\sum_{\sigma\in B_n}(-1)^{\lceil\frac{\fwex(\sigma)}{2}\rceil}t^{\nega(\sigma)}q^{\croB(\sigma)}
	={\rm Re}(B_n(\sqrt{-1},t,q))-{\rm Im}(B_n(\sqrt{-1},t,q)).
\]
Since $B_n(y,t,q)$ is the generating function of weights of paths in $\M_n$, we want to do the sign counting combinatorially via these paths. In order to do so, we restructure the weighted bicolored Motzkin paths in $\M_n$. 
Recall the weight scheme of $\M_n$ is the following:
\begin{itemize}
\item $\rho(\U^{(h)})\in\{y^2,y^2q,\dots,y^2q^{h}\}\cup\{ytq^{h},ytq^{h+1},\dots,ytq^{2h}\}$,
\item $\rho(\L^{(h)})\in\{y^2,y^2q,\dots,y^2q^{h}\}\cup\{ytq^{h},ytq^{h+1},\dots,ytq^{2h}\}$,
\item $\rho(\W^{(h)})\in\{1,q,\dots,q^{h-1}\}\cup\{ytq^{h},ytq^{h+1},\dots,ytq^{2h-1}\}$ for $h\ge 1$,
\item $\rho(\D^{(h+1)})\in\{1,q,\dots,q^{h}\}\cup\{ytq^{h+1},ytq^{h+2},\dots,ytq^{2h+1}\}$.
\end{itemize}

\begin{defi} {\rm
	Let $\H_n$ be the set of weighted bicolored Motzkin paths of length $n$ with a weight function $\rho$ such that for $h\ge 0$,
\begin{itemize}
\item $\rho(\U^{(h)})\in\{y^2,y^2q,\dots,y^2q^{h+1}\}\cup\{ytq^{h+1},ytq^{h+2},\dots,ytq^{2h+2}\}$,
\item $\rho(\L^{(h)})\in\{1,q,\dots,q^{h}\}\cup\{ytq^{h+1},ytq^{h+2},\dots,ytq^{2h+1}\}$,
\item $\rho(\W^{(h)})\in\{y^2,y^2q,\dots,y^2q^{h}\}\cup\{ytq^{h},ytq^{h+1},\dots,ytq^{2h}\}$,
\item $\rho(\D^{(h+1)})\in\{1,q,\dots,q^{h}\}\cup\{ytq^{h+1},ytq^{h+2},\dots,ytq^{2h+1}\}$.
\end{itemize}
}
\end{defi}	
	
\begin{pro} \label{pro:restructure}
	There is a two-to-one bijection $\Phi$ between $\M_n$ and $\H_{n-1}$ such that
	\[
	\rho(\M_n)=(y^2+yt)\rho(\H_{n-1}).
	\]
\end{pro}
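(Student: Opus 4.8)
The plan is to build the map $\Phi$ step by step from a path $\mu = w_1 w_2 \cdots w_n \in \M_n$, by first stripping off information about the initial step and then shifting the height-dependence of the remaining weights down by one. Observe that in $\M_n$ the only steps available at height $0$ are $\U^{(0)}$ and $\L^{(0)}$ (no wavy level steps on the $x$-axis), and in each case the weight is drawn from the common set $\{y^2\}\cup\{yt\}$, since at $h=0$ both weight sets collapse to $\rho(\U^{(0)}),\rho(\L^{(0)})\in\{y^2,yt\}$. So the first step $w_1$ contributes exactly one of the two monomials $y^2$ or $yt$, which accounts for the factor $(y^2+yt)$ and for the "two-to-one" count: we want to forget both the \emph{type} ($\U$ versus $\L$) and the \emph{weight} ($y^2$ versus $yt$) of $w_1$, and record nothing. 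That is a $2$-to-$1$ identification already at the level of the first step, so the rest of the map should be a genuine bijection.

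First I would define $\Phi$ on $\mu = w_1 w_2\cdots w_n$ by deleting $w_1$ and reinterpreting $w_2\cdots w_n$ as a path $\mu' = w_2'\cdots w_n'$ of length $n-1$ as follows. If $w_1 = \U^{(0)}$, then the remaining path $w_2\cdots w_n$ starts at height $1$; I would lower the whole path by $1$, so a step formerly at height $h+1$ becomes a step at height $h$, and I would keep the step types ($\U,\D$) and convert a straight level step to a straight level step and a wavy to a wavy. If instead $w_1 = \L^{(0)}$, then $w_2\cdots w_n$ is already a path starting at height $0$; here I would \emph{change} the first step's role so that former level steps at height $0$ become wavy steps of $\H_{n-1}$ and conversely, i.e.\ I would convert step types to absorb the height mismatch. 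The precise bookkeeping is dictated by matching the weight sets: comparing the scheme of $\M_n$ with that of $\H_{n-1}$, one sees $\rho_{\M}(\U^{(h+1)})$ matches $\rho_{\H}(\U^{(h)})$, $\rho_{\M}(\D^{(h+1)})$ matches $\rho_{\H}(\D^{(h+1)})$ after a shift, $\rho_{\M}(\W^{(h+1)})$ matches $\rho_{\H}(\L^{(h)})$, and $\rho_{\M}(\W^{(h)})$ (for the $\L^{(0)}$ branch) matches $\rho_{\H}(\W^{(h)})$; the straight level steps of $\M_n$ at positive height, which have the same weight set as the up steps, should be sent to wavy steps of $\H_{n-1}$ in the $\U^{(0)}$ branch. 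So really $\Phi$ is "delete $w_1$, shift down by the height of the second point, and relabel level steps by a fixed rule," and I would present it as an explicit rewriting of each $z^{(h)}$ to a $z'^{(h')}$ together with a bijection of the weight sets.

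The key steps, in order, are: (1) observe $\rho(\U^{(0)}),\rho(\L^{(0)})\in\{y^2,yt\}$ and split $\M_n = \M_n^{\U}\sqcup\M_n^{\L}$ according to the type of $w_1$, with each piece further split by $\rho(w_1)\in\{y^2,yt\}$; (2) on $\M_n^{\U}$, define the height-lowering relabeling and verify, line by line against the two weight schemes, that it is a weight-preserving (up to the removed factor $\rho(w_1)$) bijection onto $\H_{n-1}$; (3) do the same for $\M_n^{\L}$ with the level-step conversion; (4) check that the two bijections have the same image $\H_{n-1}$, so that $\Phi$ is globally $2$-to-$1$ with fibre contributing $y^2 + yt$; (5) conclude $\rho(\M_n) = (y^2+yt)\,\rho(\H_{n-1})$ by summing over fibres. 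I expect the main obstacle to be step (2)--(3): making the height re-indexing and the level-step relabeling consistent so that \emph{every} one of the four weight-set identities in the definition of $\H_{n-1}$ matches \emph{exactly} the corresponding set in $\M_n$ after the shift — in particular handling the asymmetry that straight level steps of $\M_n$ carry "up-step-like" weights $\{y^2q^i\}\cup\{ytq^{h+i}\}$ while those of $\H_{n-1}$ carry "down-step-like" weights $\{q^i\}\cup\{ytq^{h+1+i}\}$, which forces the $\U$-branch and $\L$-branch to treat level steps differently. Once the dictionary is pinned down, the verification is a routine but slightly tedious comparison of finite sets of monomials.
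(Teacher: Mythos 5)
Your proposal correctly identifies the weight-set dictionary that has to be realized ($\rho_{\M}(\U^{(h+1)})\leftrightarrow\rho_{\H}(\U^{(h)})$, $\rho_{\M}(\W^{(h+1)})\leftrightarrow\rho_{\H}(\L^{(h)})$, $\rho_{\M}(\U^{(h)}),\rho_{\M}(\L^{(h)})\leftrightarrow\rho_{\H}(\W^{(h)})$, $\rho_{\M}(\D^{(h+1)})\leftrightarrow\rho_{\H}(\D^{(h+1)})$), but the mechanism you propose for realizing it does not work, for two concrete reasons. First, the $\U^{(0)}$-branch map ``delete $w_1$ and lower the whole path by $1$'' is not well defined: after an initial up step the path $w_2\cdots w_n$ ends at height $0$ and in general returns to the $x$-axis before its end (e.g.\ $\U^{(0)}\D^{(1)}\L^{(0)}$), so lowering it by $1$ produces steps below the axis. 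This is a symptom of a structural obstruction: your own dictionary sends some steps of $\M_n$ at height $h+1$ to steps of $\H_{n-1}$ at height $h$, and other steps at height $h$ to steps at height $h$, so no uniform height shift combined with a fixed per-branch relabeling of types can realize it. In the paper the type and height of the $j$-th step of $\Phi(\mu)$ depend on \emph{both} $p_j$ and $p_{j+1}$: each step is doubled ($\U\mapsto\U\U$, $\L\mapsto\U\D$, $\W\mapsto\D\U$, $\D\mapsto\D\D$), the first and last half-steps are discarded, and the remaining half-steps are regrouped in pairs with an offset of one; this interleaving is exactly what makes all four weight-set identities and the height constraints hold simultaneously.

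Second, your counting is off by a factor of $2$. You propose to forget both the type and the weight of $w_1$ and then to check that the $\U$-branch and the $\L$-branch ``have the same image $\H_{n-1}$''; if each branch were a weight-forgetting $2$-to-$1$ surjection onto all of $\H_{n-1}$, the global map would be $4$-to-$1$ and you would obtain $\rho(\M_n)=2(y^2+yt)\rho(\H_{n-1})$, contradicting the identity you are proving. Only the weight $\rho(w_1)\in\{y^2,yt\}$ may be forgotten; the type of $w_1$ must remain recoverable from the image. In the paper's construction it is: the first step of $\Phi(\mu)$ lies in $\{\U,\L\}$ exactly when $p_1=\U^{(0)}$ and in $\{\W,\D\}$ exactly when $p_1=\L^{(0)}$, so the images of the two branches are complementary subsets of $\H_{n-1}$, not equal ones, and each fibre has exactly two elements differing only in $\rho(p_1)$.
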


\begin{proof} 
Given a path $\mu=p_1p_2\cdots p_n\in\M_n$, we create a weight-preserving path $z_1z_2\cdots z_{2n}$ of length $2n$ from $\mu$ as the intermediate stage, where $z_{2i-1}z_{2i}$ is determined from $p_i$ by
\[
z_{2i-1}z_{2i}=\left\{\begin{array}{ll}
                   \U\U & \mbox{ if $p_i=\U$,}\\
	               \U\D & \mbox{ if $p_i=\L$,}\\
                   \D\U & \mbox{ if $p_i=\W$,}\\
                   \D\D & \mbox{ if $p_i=\D$,}
              \end{array}
      \right.
\]
with weight $\rho(z_{2i-1})=\rho(p_i)$ and $\rho(z_{2i})=1$ for $1\le i\le n$. Note that $\rho(z_1)=\rho(p_1)\in\{y^2,yt\}$ since $p_1=\U^{(0)}$ or $\L^{(0)}$. Then we construct the corresponding path $\Phi(\mu)=p'_1p'_2\cdots p'_{n-1}$ from $z_2\cdots z_{2n-1}$ (with $z_1$ and $z_{2n}$ excluded), where $p'_j$ is determined by $z_{2j}z_{2j+1}$ with weight $\rho(p'_j)=\rho(z_{2j})\rho(z_{2j+1})$  according to the following cases
\[
p'_j=\left\{\begin{array}{ll}
                   \U & \mbox{ if $z_{2j}z_{2j+1}=\U\U$,}\\
	               \L & \mbox{ if $z_{2j}z_{2j+1}=\U\D$,}\\
                   \W & \mbox{ if $z_{2j}z_{2j+1}=\D\U$,}\\
                   \D & \mbox{ if $z_{2j}z_{2j+1}=\D\D$,}
              \end{array}
      \right.
\]
for $1\le j\le n-1$. Notice that $\rho(\mu)=\rho(p_1)\rho(\Phi(\mu))$ since $\rho(p'_j)=\rho(p_{j+1})$ for $1\le j\le n-1$. Hence $\rho(\mu)=y^2\rho(\Phi(\mu))$ or $yt\rho(\Phi(\mu))$. Moreover, the possible weights of $p'_j$ can be determined from the steps of $\mu$ since
\[
	p'_j=\left\{\begin{array}{ll}
	\U^{(h)} & \mbox{ if $p_{j+1}=\U^{(h+1)}$ or $\L^{(h+1)}$,}\\
	\L^{(h)} & \mbox{ if $p_{j+1}=\W^{(h+1)}$ or $\D^{(h+1)}$,}\\
	\W^{(h)} & \mbox{ if $p_{j+1}=\U^{(h)}$ or $\L^{(h)}$,}\\
	\D^{(h+1)} & \mbox{ if $p_{j+1}=\D^{(h+1)}$ or $\W^{(h+1)}$,}
	\end{array}
	\right.
\]
for some $h\ge 0$. That $\Phi(\mu)\in\H_{n-1}$ follows from the weight function of the paths in $\M_n$.

It is straightforward to obtain the map $\Phi^{-1}$ by the reverse procedure.
\end{proof}

\begin{exa} {\rm See the figure below. Let $\mu=p_1p_2\dots p_8\in\M_8$ be the path shown on the left-hand side in the upper row, where $w_j=\rho(p_j)$ for $1\le j\le 8$. The corresponding  bicolored Motzkin path $\Phi(\mu)=p'_1p'_2\cdots p'_7\in\H_7$  is shown on the right-hand side, where the intermediate stage $z_1z_2\cdots z_{16}$ is shown in the lower row.
}
\begin{center}
\psfrag{1}[][][0.85]{$1$}
\psfrag{w1}[][][0.85]{$w_1$}
\psfrag{w2}[][][0.85]{$w_2$}
\psfrag{w3}[][][0.85]{$w_3$}
\psfrag{w4}[][][0.85]{$w_4$}
\psfrag{w5}[][][0.85]{$w_5$}
\psfrag{w6}[][][0.85]{$w_6$}
\psfrag{w7}[][][0.85]{$w_7$}
\psfrag{w8}[][][0.85]{$w_8$}
\psfrag{Phi}[][][0.85]{$\Phi$}
\includegraphics[width=4.4in]{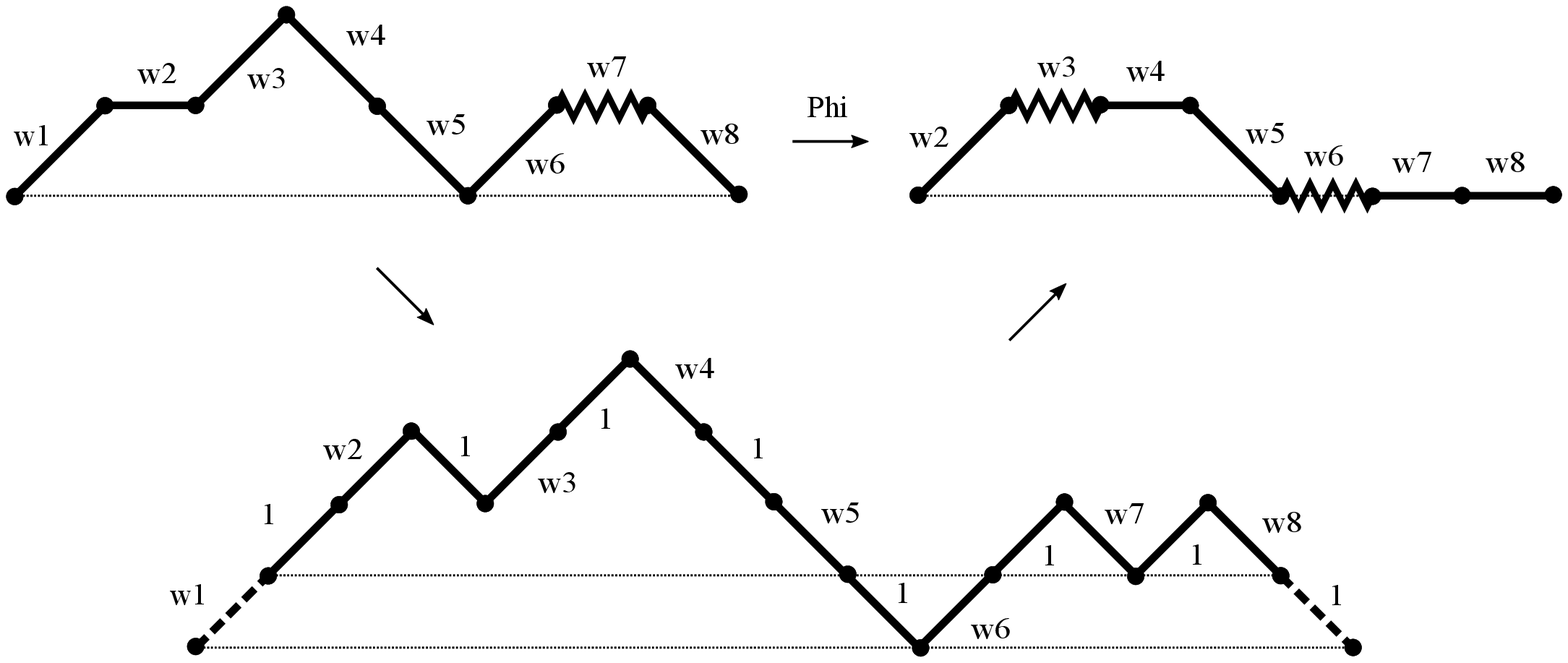}
\end{center}
\end{exa}

	By a \emph{matching} pair $(\U^{(h)},\D^{(h+1)})$ we mean an up step $\U^{(h)}$ and a down step $\D^{(h+1)}$ that face each other, in the sense that the horizontal line segment from the midpoint of $\U^{(h)}$ to the midpoint of $\D^{(h+1)}$ stays under the path.
	
	Let $\rho(\U^{(h)},\D^{(h+1)})$ denote the ordered pair $(\rho(\U^{(h)}),\rho(\D^{(h+1)}))$ of weights. We shall establish an involution $\Psi_1:\H_n\rightarrow\H_n$ that changes the weight of a path by the factor $y^2$, with the following set of restricted paths as the fixed points.

\begin{defi} \label{def:Fn} {\rm
Let $\F_n\subset\H_n$ be the subset consisting of the weighted paths satisfying the following conditions. For $h\ge 0$,
\begin{itemize}
\item $\rho(\U^{(h)},\D^{(h+1)})=(y^2q^a,q^b)$ or $(ytq^{h+1+a},ytq^{h+1+b})$ for some $a\in\{0,1,\dots, h+1\}$ and $b\in\{0,1,\dots, h\}$, for any matching pair $(\U^{(h)},\D^{(h+1)})$,
	\item $\rho(\L^{(h)})\in\{ytq^{h+1}, ytq^{h+2},\dots,ytq^{2h+1} \}$,
	\item $\rho(\W^{(h)})\in\{ytq^{h},ytq^{h+1},\dots,ytq^{2h}\}$.
\end{itemize}
}
\end{defi}

Notice that for any matching pair $(\U^{(h)},\D^{(h+1)})$ with weight $(a,b)$, it is equivalent to the reassignment $\rho(\U^{(h)},\D^{(h+1)})=(a',b')$ such that $a'b'=ab$ regarding the total weight of a path.
Comparing the weight functions of the paths in $\F_n$ and in the set $\T_n$ given in Proposition \ref{pro:Tn-weighting}, we have the following result.
	
\begin{lem} \label{lem:Fn=y^nRn} We have
\[
\rho(\F_n)=y^n R_n(t,q).
\]
\end{lem}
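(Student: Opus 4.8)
The plan is to identify the weight scheme of $\F_n$ with that of $\T_n$ from Proposition~\ref{pro:Tn-weighting} after factoring out the scalar $y^n$, and then invoke that proposition. The key observation is that every path in $\H_n$ — and hence in $\F_n$ — has the same number of up steps as down steps, so replacing the weight of each $\U$ step by $y^{-2}$ times its current weight, and leaving the $\L$, $\W$, $\D$ weights essentially unchanged (up to stripping a $y$), rescales $\rho$ by a uniform factor $y^n$. Concretely, I would set up a weight-preserving bijection $\F_n\to\T_n$ that is the identity on the underlying lattice path and acts on weights by $\rho(\U^{(h)})\mapsto y^{-2}\rho(\U^{(h)})$, $\rho(\L^{(h)})\mapsto y^{-1}\rho(\L^{(h)})$, $\rho(\W^{(h)})\mapsto y^{-1}\rho(\W^{(h)})$, $\rho(\D^{(h+1)})\mapsto\rho(\D^{(h+1)})$, together with the remark (already noted in the text) that for a matching pair one may freely reassign $(\rho(\U^{(h)}),\rho(\D^{(h+1)}))=(a,b)$ to any $(a',b')$ with $a'b'=ab$ without affecting the total path weight.

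First I would check the up/down step contribution. In $\F_n$ a matching pair has $\rho(\U^{(h)},\D^{(h+1)})$ equal to $(y^2q^a,q^b)$ with $a\in\{0,\dots,h+1\}$, $b\in\{0,\dots,h\}$, or to $(ytq^{h+1+a},ytq^{h+1+b})$ with the same ranges. In the first case the product of weights is $y^2q^{a+b}$, which after dividing by $y^2$ gives $q^{a+b}$ with $a+b$ ranging so that it is realized by the $\T_n$-scheme $\rho(\U^{(h)})\in\{1,\dots,q^{h}\}$, $\rho(\D^{(h+1)})\in\{1,\dots,q^{h+1}\}$ (here one uses the reassignment freedom to move factors of $q$ between the up and down step so that the exponent on $\U^{(h)}$ lands in $\{0,\dots,h\}$ and the exponent on $\D^{(h+1)}$ in $\{0,\dots,h+1\}$; one must verify the total exponent range $\{0,\dots,2h+1\}$ matches on both sides). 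In the second case the product is $y^2t^2q^{2h+2+a+b}$; dividing by $y^2$ leaves $t^2q^{2h+2+a+b}$ with $a+b\in\{0,\dots,2h+1\}$, i.e. the exponent on $q$ runs over $\{2h+2,\dots,4h+3\}$, which must be matched by $\rho(\U^{(h)})\in\{t^2q^{2h+2},\dots,t^2q^{3h+2}\}$ together with $\rho(\D^{(h+1)})\in\{1,\dots,q^{h+1}\}$ — again using reassignment to split $t^2q^{2h+2+a+b}$ as $t^2q^{2h+2+c}\cdot q^{d}$ with $c\in\{0,\dots,h\}$, $d\in\{0,\dots,h+1\}$, $c+d=a+b$. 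For the level steps the check is immediate: $\rho(\L^{(h)})\in\{ytq^{h+1},\dots,ytq^{2h+1}\}$ in $\F_n$ becomes $\{tq^{h+1},\dots,tq^{2h+1}\}$ after removing one $y$, matching $\T_n$; similarly $\rho(\W^{(h)})\in\{ytq^{h},\dots,ytq^{2h}\}$ becomes $\{tq^{h},\dots,tq^{2h}\}$, matching $\T_n$. Since a Motzkin path of length $n$ with $k$ up steps has $k$ down steps and $n-2k$ level steps, the cumulative rescaling factor is $y^{2}$ per up step, $y$ per down step contributes nothing (we divided the $\D$-weights by $1$) — wait, more precisely the factor is $y^{2k}$ from the $k$ up–down pairs plus $y^{n-2k}$ from the $n-2k$ level steps, totalling $y^{n}$, so $\rho(\F_n)=y^n\rho(\T_n)$.

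Combining this identity with Proposition~\ref{pro:Tn-weighting}, which gives $\sum_{n\ge0}\rho(\T_n)x^n=\sum_{n\ge0}R_n(t,q)x^n$ and hence $\rho(\T_n)=R_n(t,q)$, yields $\rho(\F_n)=y^nR_n(t,q)$, as claimed. The main obstacle I anticipate is the bookkeeping for the matching-pair weights: one has to be careful that the exponent ranges genuinely agree after using the reassignment freedom, i.e. that the multiset of total exponents $\{q^j : 0\le j\le 2h+1\}$ (straight case) and $\{t^2q^j : 2h+2\le j\le 4h+3\}$ (the $yt$ case) produced by $\F_n$ coincides with the multiset produced by the $\T_n$-scheme, and that no weight is double-counted or omitted at the boundary values $a=h+1$, $b=h$. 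Everything else is a routine term-by-term comparison of the four weight-scheme bullets.
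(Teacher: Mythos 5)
Your proposal is correct and follows essentially the same route as the paper: factor out $y^{2}$ per matching pair and $y$ per level step to get the global factor $y^{n}$, match the resulting weight scheme of $\F_n$ against that of $\T_n$ (using the freedom to redistribute the product weight of a matching pair, which is exactly what makes the two schemes' pair-product multisets, equivalently the Flajolet data $a_hb_{h+1}=(1+t^2q^{2h+2})[h+1]_q[h+2]_q$, coincide), and then invoke Proposition~\ref{pro:Tn-weighting}. The paper's proof is just a terser version of this, citing Theorem~\ref{thm:Flajolet} in place of your explicit term-by-term bookkeeping.
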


\begin{proof} For any path $\mu\in\F_n$, notice that the weight of $\mu$ contains the factor $y^n$ since every matching pair of up step and down step contributes the parameter $y^2$, while every level step (either straight or wavy) contributes the parameter $y$.
By Theorem \ref{thm:Flajolet}, we observe that $\rho(\F_n)$ and $y^n\cdot \rho(\T_n)$ have same generating function. By Proposition \ref{pro:Tn-weighting}, the assertion follows.
\end{proof}

\smallskip
The `sign-reversing' map $\Psi_1:\H_n\rightarrow\H_n$ is constructed as follows.

\smallskip
\noindent
{\bf Algorithm A.}

Given a path $\mu\in\H_n$, the corresponding path $\Psi_1(\mu)$ is constructed by the following procedure.	
\begin{enumerate}
	\item[(A1)] If $\mu$ contains no straight level steps $\L^{(h)}$ with weight $q^a$ or wavy level steps $\W^{(h)}$ with weight $y^2q^a$ for any $a\in\{0,1,\dots,h\}$ then go to (A2). Otherwise, among such level steps find the first step $z$, say $z=\L^{(h)}$ ($\W^{(h)}$, respectively) with weight $\rho(z)=q^a$ ($y^2q^a$, respectively), then the corresponding path $\Psi_1(\mu)$ is obtained by replacing $z$ by $\W^{(h)}$ ($\L^{(h)}$, respectively) with weight $y^2q^a$ ($q^a$, respectively).
	\item[(A2)] If $\mu$ contains no matching pairs $(\U^{(h)},\D^{(h+1)})$ with $\rho(\U^{(h)},\D^{(h+1)})=(y^2q^a, ytq^{h+1+b})$ or $(ytq^{h+1+a}, q^{b})$ for any $a\in\{0,1,\dots, h+1\}$ or  $b\in\{0,1,\dots h\}$ then go to (A3). Otherwise, among such pairs find the first pair with weight, say $(y^2q^a, ytq^{h+1+b})$ ($(ytq^{h+1+a}, q^{b})$, respectively), then the corresponding path $\Psi_1(\mu)$ is obtained by replacing the weight of the pair by $(ytq^{h+1+a}, q^{b})$ ($(y^2q^a, ytq^{h+1+b})$, respectively).
	\item[(A3)] Then we have $\mu\in\F_n$. Let $\Psi_1(\mu)=\mu$.
	\end{enumerate}

\smallskip
Regarding the possibilities of the weighted steps of the paths in $\H_n$, we have the following immediate result.
	
\begin{pro} \label{pro:map-psi1} The map $\Psi_1$ established by Algorithm A is an involution on the set $\H_n$ such that for any path $\mu\in\H_n$,  $\Psi_1(\mu)=\mu$ if $\mu\in\F_n$, and $\rho(\Psi_1(\mu))=y^2\rho(\mu)$ or $y^{-2}\rho(\mu)$ otherwise.
\end{pro}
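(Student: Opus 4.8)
The plan is to verify the three assertions of Proposition~\ref{pro:map-psi1} directly from the case analysis of Algorithm~A, checking that each of the three mutually exclusive branches (A1), (A2), (A3) is well-defined on $\H_n$, that their union exhausts $\H_n$, and that $\Psi_1\circ\Psi_1=\mathrm{id}$.

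First I would confirm that the three branches are exhaustive and mutually exclusive. A path $\mu\in\H_n$ is sent to (A2) precisely when it has no level step of the first kind ($\L^{(h)}$ of weight $q^a$ or $\W^{(h)}$ of weight $y^2q^a$ with $a\le h$), and to (A3) precisely when, in addition, it has no matching pair of the mixed type $(y^2q^a,ytq^{h+1+b})$ or $(ytq^{h+1+a},q^b)$. In that final case I must check that the surviving weights are exactly those allowed in Definition~\ref{def:Fn}: comparing the weight scheme of $\H_n$ with the exclusions, a level step that is not ``of the first kind'' must be an $\L^{(h)}$ of weight in $\{ytq^{h+1},\dots,ytq^{2h+1}\}$ or a $\W^{(h)}$ of weight in $\{ytq^h,\dots,ytq^{2h}\}$, which is exactly the $\F_n$ condition; and a matching pair $(\U^{(h)},\D^{(h+1)})$ that is not of the mixed type must have both coordinates of the ``$y^2$-and-$q$'' flavour or both of the ``$yt$'' flavour, i.e.\ $(y^2q^a,q^b)$ or $(ytq^{h+1+a},ytq^{h+1+b})$ with $a\le h+1$, $b\le h$. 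So $\mu\in\F_n$, confirming (A3) lands in the fixed-point set.

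Next I would check the weight change. In branch (A1), swapping $\L^{(h)}$ of weight $q^a$ for $\W^{(h)}$ of weight $y^2q^a$ multiplies $\rho(\mu)$ by $y^2$, and the reverse swap divides by $y^2$; both target weights are legal steps in $\H_n$ (here one uses $a\le h$, which is exactly the range where both a $\L^{(h)}$ of weight $q^a$ and a $\W^{(h)}$ of weight $y^2q^a$ exist). In branch (A2), replacing the pair weight $(y^2q^a,ytq^{h+1+b})$ by $(ytq^{h+1+a},q^b)$ changes the product of the two weights from $y^2t q^{a+h+1+b}$ to $yt q^{h+1+a+b}$, a factor of $y^{-1}$... here I must be careful: the claim is the total path weight changes by $y^{\pm2}$. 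Recounting: the pair contributes $y^{2}\cdot y\,t q^{\cdots}$ versus $y\,t q^{\cdots}\cdot y^{2}$ — actually both mixed assignments carry total $y$-degree $3$, so I should instead track that (A2) swaps a pair whose two coordinates have $y$-degrees $\{2,1\}$ for one with $y$-degrees $\{1,2\}$ in the other order, which does \emph{not} change the weight; hence the genuine sign change in (A2) must come from reading the reassignment as $(y^2q^a,\,ytq^{h+1+b})\leftrightarrow(ytq^{h+1+a},\,q^{b})$, where the first has $y$-degree $2+1=3$ and the second has $y$-degree $1+0=1$, a factor of $y^{-2}$ (and $y^{+2}$ in reverse). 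The main obstacle, then, is bookkeeping the $q$-exponents and $y$-exponents carefully enough to see that the two reassignments in each of (A1) and (A2) are mutually inverse legal moves on $\H_n$ and that the net effect is always $\rho\mapsto y^{\pm2}\rho$; once the weight ranges of Definition of $\H_n$ are cross-checked against the ranges named in Algorithm~A, this is mechanical.

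Finally I would verify involutivity: the ``first such step/pair'' selection rule in (A1) and (A2) is symmetric under the swap, since the swap of $\Psi_1(\mu)$ reinstates exactly the step or pair that was modified in $\mu$ (the move does not create or destroy earlier candidates of the same kind — a point worth a sentence: an (A1)-move changes only the step $z$ and does not turn any earlier step into an (A1)-candidate, because earlier steps are untouched, and similarly within (A2) an earlier matching pair is unaffected), and the fixed points are precisely $\F_n$ by the exhaustiveness argument above. Combining these observations gives that $\Psi_1$ is a well-defined involution on $\H_n$ with fixed-point set $\F_n$ and $\rho(\Psi_1(\mu))=y^{\pm2}\rho(\mu)$ off $\F_n$, which is the assertion. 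I expect the only real subtlety is the exponent arithmetic in the weight-change claim for (A2); the rest is a routine partition of cases.
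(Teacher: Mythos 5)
Your proposal is correct and takes essentially the same route as the paper, which states the proposition as an immediate consequence of the weight scheme of $\H_n$: one checks that the complement of the (A1)- and (A2)-candidates is exactly the weight pattern of $\F_n$, that each swap is a legal, position-preserving, mutually inverse move, and that the $y$-degree changes by $\pm 2$ while the $t$- and $q$-exponents are preserved. Your momentary miscount of the $y$-degree in the (A2) swap is self-corrected — the pair $(y^2q^a,\,ytq^{h+1+b})$ has total $y$-degree $3$ and $(ytq^{h+1+a},\,q^{b})$ has total $y$-degree $1$, giving the factor $y^{\pm2}$ as you conclude — so the argument as finally stated is sound.
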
	

Now we are ready to prove Theorem \ref{thm:signFwexB}.

\smallskip
\noindent
\emph{Proof of Theorem \ref{thm:signFwexB}.} For (i), by Theorem \ref{thm:Corteel} and Proposition \ref{pro:restructure}, we have
\[
\sum_{\sigma\in B_n} y^{\fwex(\sigma)}t^{\nega(\sigma)}q^{\cro_B(\sigma)} =(y^2+yt)\rho(\H_{n-1}).
\]
Then the expression $\sum_{\sigma\in B_n}(-1)^{\lfloor \frac{\fwex(\sigma)}{2}\rfloor} t^{\nega(\sigma)}q^{\cro_B(\sigma)}$ equals the sum of the real part and the imaginary part of the polynomial $(y^2+yt)\rho(\H_{n-1})$ evaluated at $y=\sqrt{-1}$. By  Proposition \ref{pro:map-psi1} and Lemma \ref{lem:Fn=y^nRn}, we have
\begin{align*}
(y^2+yt)\rho(\H_{n-1})\big|_{y=\sqrt{-1}}
            &= (y^2+yt)\rho(\F_{n-1})\big|_{y=\sqrt{-1}}  \\
            &= (y^{n+1}+y^nt) R_{n-1}(t,q)\big|_{y=\sqrt{-1}}\\
            &=\begin{cases}
            	\left[(-1)^{\frac{n}{2}}\sqrt{-1}+(-1)^{\frac{n}{2}}t\right]R_n(t,q) & \mbox{, if $n$ is even;}\\
            	\left[(-1)^{\frac{n+1}{2}}+(-1)^{\frac{n-1}{2}}t\sqrt{-1}\right]R_{n-1}(t,q) & \mbox{, if $n$ is odd.}
            \end{cases}
\end{align*}
Taking the real part and the imaginary part of the above evaluation leads to
\[
\sum_{\sigma\in B_n}(-1)^{\lfloor \frac{\fwex(\sigma)}{2}\rfloor} t^{\nega(\sigma)}q^{\cro_B(\sigma)}
	=\begin{cases}
		(-1)^{\frac{n}{2}}(t+1) R_{n-1}(t,q) &\mbox{, if $n$ is odd;}\\
		(-1)^{\frac{n-1}{2}}(t-1) R_{n-1}(t,q) &\mbox{, if $n$ is even.}
	\end{cases}
\] 	
as required.\\
For (ii), similarly the expression $\sum_{\sigma\in B_n}(-1)^{\lceil \frac{\fwex(\sigma)}{2}\rceil} t^{\nega(\sigma)}q^{\cro_B(\sigma)}$ equals the real part subtracting the imaginary part of the polynomial $(y^2+yt)\rho(\H_{n-1})$ evaluated at $y=\sqrt{-1}$. Therefore, we have
\[
	\sum_{\sigma\in D_n}(-1)^{\lfloor\frac{\fwex(\sigma)}{2}\rfloor} t^{\nega(\sigma)}q^{\cro_B(\sigma)}
     =\left\{\begin{array}{ll} 
     (-1)^\frac{n}{2}tR_{n-1}(t,q)  & \mbox{ if $n$ is even,}\\
     (-1)^\frac{n+1}{2}R_{n-1}(t,q) & \mbox{ if $n$ is odd.}
             \end{array}
      \right.
\] \qed

\smallskip
In the following we shall prove Corollary \ref{thm:signFwexD}. Recall that the set $D_n$ of even-signed permutations consists of the signed permutations with even number of negative entries.

\begin{defi} {\rm
Let $\M'_n\subset \M_n$ be the subset consisting of the  paths whose weights contain even powers of $t$. Let $\H^{(1)}_n$ ($\H^{(2)}_n$, respectively) be the subset of $\H_n$ consisting of the paths whose weights contain odd (even, respectively) powers of $t$.
}
\end{defi}

Notice that the bijection $\Gamma: B_n\rightarrow\M_n$ in Theorem \ref{thm:Corteel} induces a bijection between $D_n$ and $\M'_n$ such that
\begin{equation} \label{eqn:Dn-M'n}
\sum_{\sigma\in D_n} y^{\fwex(\sigma)}t^{\nega(\sigma)}q^{\cro_B(\sigma)} = \rho(\M'_n).
\end{equation}
Moreover, the involution $\Psi_1:\H_n\rightarrow\H_n$ and the set $\F_n$ of fixed points have the following properties.

\begin{lem} \label{lem:induced-map-psi1} The map $\Psi_1$ established by Algorithm A induces an involution on $\H^{(1)}_n$ and $\H^{(2)}_n$, respectively. Moreover, for any path $\mu\in\F_n$, the power of $t$ of $\rho(\mu)$ has the same parity of $n$.
\end{lem}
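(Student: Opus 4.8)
The plan is to extract from Algorithm~A a single invariant: $\Psi_1$ preserves the \emph{exact} power of $t$ in $\rho(\mu)$, not merely its parity, and both assertions follow quickly from this. To see the invariant I would inspect the two rewriting moves in Algorithm~A. In step~(A1) a level step of weight $q^a$ (straight) or $y^2q^a$ (wavy) is replaced by a level step of the other colour of weight $y^2q^a$ or $q^a$; since neither $q^a$ nor $y^2q^a$ involves $t$, the $t$-content of $\mu$ is untouched. In step~(A2) the weight of a matching pair $(\U^{(h)},\D^{(h+1)})$ is switched between $(y^2q^a,ytq^{h+1+b})$ and $(ytq^{h+1+a},q^b)$; in both configurations the pair carries a combined factor $t^1$ --- the single $t$ merely migrates from the down step to the up step or back --- so the total power of $t$ of $\mu$ is again unchanged. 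Step~(A3) fixes the path outright. Hence $\rho(\mu)$ and $\rho(\Psi_1(\mu))$ always have the same power of $t$.

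Granting this, the first assertion is immediate. All weights being monomials in $y,t,q$, the set $\H_n$ splits as the disjoint union $\H^{(1)}_n\sqcup\H^{(2)}_n$ according to the parity of the power of $t$, and the invariant just established gives $\Psi_1(\H^{(1)}_n)\subseteq\H^{(1)}_n$ and $\Psi_1(\H^{(2)}_n)\subseteq\H^{(2)}_n$. Since $\Psi_1$ is already an involution on all of $\H_n$ by Proposition~\ref{pro:map-psi1}, its restriction to each of these two subsets is again an involution.

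For the second assertion I would tally the power of $t$ of $\rho(\mu)$ for $\mu\in\F_n$ step by step, using the weight scheme of Definition~\ref{def:Fn}. In a Motzkin path the up steps and down steps come in matching pairs, and in $\F_n$ every such pair $(\U^{(h)},\D^{(h+1)})$ has weight $(y^2q^a,q^b)$ or $(ytq^{h+1+a},ytq^{h+1+b})$, contributing $t^0$ or $t^2$ --- an even power of $t$ in either case --- whereas every level step, straight $\L^{(h)}$ of weight $ytq^j$ or wavy $\W^{(h)}$ of weight $ytq^j$, contributes exactly $t^1$. Therefore the power of $t$ of $\rho(\mu)$ is congruent modulo $2$ to the number of level steps of $\mu$. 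A Motzkin path of length $n$ has as many up steps as down steps, so its number of level steps is congruent to $n$ modulo $2$, and the claim follows.

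There is no real obstacle here: the whole argument is a careful reading of Algorithm~A together with the weight scheme of $\H_n$ and that of Definition~\ref{def:Fn}. The only point deserving attention is the bookkeeping in move~(A2): although the individual up and down steps of a matched pair swap their $t$-exponents (from $0,1$ to $1,0$), the pair as a unit always carries $t$-degree $1$, and it is precisely this that makes $\Psi_1$ preserve the power of $t$ of the entire path, hence restrict to $\H^{(1)}_n$ and $\H^{(2)}_n$.
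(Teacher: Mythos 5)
Your proof is correct and follows essentially the same route as the paper: establish that $\Psi_1$ preserves the power of $t$ (so it restricts to an involution on each of $\H^{(1)}_n$ and $\H^{(2)}_n$), then note that in $\F_n$ each matching pair contributes $t^0$ or $t^2$ while each level step contributes $t^1$, so the $t$-degree has the parity of the number of level steps, which is the parity of $n$. Your explicit verification of moves (A1) and (A2) is a slightly more careful justification of the $t$-preservation than the paper's bare citation of Proposition~\ref{pro:map-psi1}, but the argument is the same.
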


\begin{proof} By Proposition \ref{pro:map-psi1}, we observe that the map $\Psi_1:\H_n\rightarrow\H_n$ preserves the powers of $t$ of the weight of the paths. By the
weight conditions of $\mu\in\F_n$ given in Definition \ref{def:Fn}, we observe that every matching pair $(\U^{(h)},\D^{(h+1)})$ contributes the parameter $t^0$ or $t^2$ to $\rho(\mu)$, while every level step contributes the parameter $t$ to $\rho(\mu)$. The assertions follow.
\end{proof}

\noindent
Now we are ready to prove Corollary \ref{thm:signFwexD}.

\smallskip
\noindent
\emph{Proof of Corollary \ref{thm:signFwexD}.} By Proposition \ref{pro:restructure} and Eq.\,(\ref{eqn:Dn-M'n}),  taking the terms with even powers of $t$ yields
\[
\sum_{\sigma\in D_n} y^{\fwex(\sigma)}t^{\nega(\sigma)}q^{\cro_B(\sigma)} =y^2\rho(\H^{(2)}_{n-1})+yt\rho(\H^{(1)}_{n-1}).
\]
Consider the polynomial $y^2\rho(\H^{(2)}_{n-1})+yt\rho(\H^{(1)}_{n-1})$ evaluated at $y=\sqrt{-1}$. By Proposition \ref{pro:map-psi1} and Lemma  \ref{lem:induced-map-psi1}, for $n$ odd, we have $\rho(\H^{(1)}_{n-1})\big|_{y=\sqrt{-1}}=0$ and
\begin{align*}
y^2\rho(\H^{(2)}_{n-1})\big|_{y=\sqrt{-1}}
            &= y^2\rho(\F_{n-1})\big|_{y=\sqrt{-1}}  \\
            &= y^{n+1} R_{n-1}(t,q)\big|_{y=\sqrt{-1}},
\end{align*}
Moreover, for $n$ even, we have $\rho(\H^{(2)}_{n-1})\big|_{y=\sqrt{-1}}=0$ and
\begin{align*}
yt\rho(\H^{(1)}_{n-1})\big|_{y=\sqrt{-1}}
            &= yt\rho(\F_{n-1})\big|_{y=\sqrt{-1}}  \\
            &= y^{n}t R_{n-1}(t,q)\big|_{y=\sqrt{-1}}.
\end{align*}
Hence we have
\[
\sum_{\sigma\in D_n}(-1)^{\lfloor \frac{\fwex(\sigma)}{2}\rfloor} t^{\nega(\sigma)}q^{\cro_B(\sigma)}=\left\{
               \begin{array}{ll}
                (-1)^{\frac{n+1}{2}} R_{n-1}(t,q) &\mbox{if $n$ is odd,} \\
                (-1)^{\frac{n}{2}}tR_{n-1}(t,q) &\mbox{if $n$ is even.}
               \end{array}
               \right.
\]
Note that we obtain the same result if we replace $(-1)^{\lfloor \frac{\fwex(\sigma)}{2}\rfloor}$ by $(-1)^{\lceil \frac{\fwex(\sigma)}{2}\rceil}$, since every $\sigma\in D_n$ has even number of negative entries, which leads to $\lfloor\frac{\fwex(\sigma)}{2}\rfloor=\lceil\frac{\fwex(\sigma)}{2}\rceil=\frac{\fwex(\sigma)}{2}$.	
The proof of Corollary \ref{thm:signFwexD} is completed.

\section{The cases of $B_n^*$ and $D_n^*$}
In this section we prove Theorem \ref{thm:signFwexB*} and Corollary \ref{thm: signFwexD*} in terms of the weighted paths associated to the set $B^{*}_n$ of signed permutations without fixed points.

Let $B_n^*(y,t,q)=\sum_{\sigma\in B_n^*}y^{\fwex(\sigma)}t^{\nega(\sigma)}q^{\croB(\sigma)}$.
Notice that plugging in $y=\sqrt{\frac{-1}{q}}$ in $B_n^*(y,t,q)$, we obtain
\begin{align*}
	&B_n^*\left(\sqrt{\frac{-1}{q}},t,q\right)\\
   =&\sum_{\substack{\sigma\in B_n^* \\ 2|\fwex(\sigma)}}\left(\sqrt{\frac{-1}{q}}\right)^{\fwex(\sigma)} t^{\nega(\sigma)}q^{\cro_B(\sigma)}+\sum_{\substack{\sigma\in B_n^* \\ 2\nmid \fwex(\sigma)}}\left(\sqrt{\frac{-1}{q}}\right)^{\fwex(\sigma)} t^{\nega(\sigma)}q^{\cro_B(\sigma)}\\
 =&\sum_{\substack{\sigma\in B_n^* \\ 2|\fwex(\sigma)}}\left(\frac{-1}{q}\right)^{\frac{\fwex(\sigma)}{2}} t^{\nega(\sigma)}q^{\cro_B(\sigma)}+\sqrt{\frac{-1}{q}}\sum_{\substack{\sigma\in B_n \\ 2\nmid \fwex(\sigma)}}\left(\frac{-1}{q}\right)^{\frac{\fwex(\sigma)-1}{2}} t^{\nega(\sigma)}q^{\cro_B(\sigma)}.
\end{align*}
It is easy to see that
\[
	\sum_{\sigma\in B_n^*}\left(\frac{-1}{q}\right)^{\lfloor\frac{\fwex(\sigma)}{2}\rfloor}t^{\nega(\sigma)}q^{\croB(\sigma)}={\rm Re}\left(B_n^*\left(\sqrt{\frac{-1}{q}},t,q\right)\right)+\sqrt{q}\cdot{\rm Im}\left(B_n^*\left(\sqrt{\frac{-1}{q}},t,q\right)\right)
\]
and
\[
	\sum_{\sigma\in B_n^*}\left(\frac{-1}{q}\right)^{\lceil\frac{\fwex(\sigma)}{2}\rceil}t^{\nega(\sigma)}q^{\croB(\sigma)}={\rm Re}\left(B_n^*\left(\sqrt{\frac{-1}{q}},t,q\right)\right)-\sqrt{q}\cdot{\rm Im}\left(B_n^*\left(\sqrt{\frac{-1}{q}},t,q\right)\right)
\]
We show that $B_n^*(y,t,q)$ is the generating function of weights of some subset $\M_n^{*}$ of $\M_n$ which is easily described, so we can do the sign counting combinatorially via $\M_n^{*}$.

By the definition of the crossings of signed permutations, for any $\sigma=\sigma_1\sigma_2\cdots\sigma_n\in B_n$ we observe that if $(i,j)$ is a crossing of $\sigma$ then $\sigma_i\neq i$ and $\sigma_j\neq j$, i.e., the fixed points of $\sigma$ are not involved in any crossing of $\sigma$. The following fact is a property of the bijection $\Gamma:B_n\rightarrow\M_n$ given in Theorem \ref{thm:Corteel}.

\begin{lem} \label{lem:no-xing} For a $\sigma=\sigma_1\sigma_2\cdots\sigma_n\in B_n$, let $\Gamma(\sigma)=z_1z_2\cdots z_n\in\M_n$ be the corresponding weighted bicolored Motzkin path. Then for $j\in [n]$, $\sigma_j=j$ if and only if
the step $z_j$ is a straight level step with weight $y^2$.
\end{lem}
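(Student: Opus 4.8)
The plan is to extract both implications from the explicit construction of the bijection $\Gamma\colon B_n\to\M_n$ of \cite{CJK} (the type-B extension of Foata--Zeilberger). Recall that in this construction the step $z_j$ attached to an index $j\in[n]$ is determined by comparing $j$ with $\sigma(j)$ and with $\sigma^{-1}(j)$ and by the sign of $\sigma_j$: up and down steps arise at ``openers'' and ``closers'', while the two colours of level step record the remaining indices (transients and fixed points), the straight colour $\L$ carrying the weak-excedance flavour $\sigma_j\ge j$ and the wavy colour $\W$ the complementary one. The height of $z_j$ records the number of arcs of $\sigma$ passing over $j$, the weight of $z_j$ acquires a factor $t$ exactly when $\sigma_j<0$, and the power of $q$ in the weight records a crossing count located at $j$. (The defining restriction of $\M_n$ — no wavy level step on the $x$-axis — is part of the target set and needs no attention.)

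For the forward implication, suppose $\sigma_j=j$. Then $\sigma(j)=j=\sigma^{-1}(j)$, so $j$ is a weak excedance of positive value which is neither an opener nor a closer nor a wavy transient; hence $z_j$ is a straight level step $\L^{(h)}$ for some $h\ge 0$. By the observation made just before the statement, a fixed point of $\sigma$ lies on no crossing of $\sigma$, so the power of $q$ in $\rho(z_j)$ is $0$; and $\sigma_j=j>0$ forces the weight to carry no factor $t$ and the double power $y^2$. As $y^2=y^2q^0$ is admissible for $\L^{(h)}$ at every height $h\ge 0$, we get $\rho(z_j)=y^2$. Thus $\Gamma$ maps $\{\sigma\in B_n:\sigma_j=j\}$ into the set $P_j$ of paths in $\M_n$ whose $j$-th step is a straight level step of weight $y^2$.

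For the converse I would avoid chasing the crossing bookkeeping of $\Gamma^{-1}$ and argue by cardinalities. A signed permutation fixing $j$ (hence also $-j$) restricts to an arbitrary signed permutation of $\{\pm i:i\in[n],\,i\ne j\}$, so $\#\{\sigma\in B_n:\sigma_j=j\}=|B_{n-1}|$. On the other side, deleting from a path $\mu\in P_j$ its $j$-th step — a level step, so the heights of all other steps are unchanged — yields a weighted bicolored Motzkin path of length $n-1$ in which each step keeps its height and hence its admissible weights, i.e.\ a path in $\M_{n-1}$; reinserting at position $j$ a straight level step of weight $y^2$ (which is admissible at every height and is not a wavy step) is the inverse operation, so $|P_j|=|\M_{n-1}|$. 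By Theorem~\ref{thm:Corteel}, $|\M_{n-1}|=|B_{n-1}|$. Hence $\Gamma$ restricts to an injection from $\{\sigma:\sigma_j=j\}$ into $P_j$ between finite sets of equal size, so this restriction is a bijection onto $P_j$; since $\Gamma$ itself is injective, $\Gamma(\sigma)\in P_j$ forces $\sigma_j=j$, which completes the equivalence.

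I expect the forward implication to be the main obstacle: it requires reading off from the (somewhat elaborate) case analysis in \cite{CJK}, and its modification in the Remark following Theorem~\ref{thm:Corteel}, the two facts that the step assigned to a fixed-point index is the \emph{straight} level step rather than the wavy one, and that the arcs passing over a fixed point contribute no power of $q$ to its weight. Once these are verified, the converse is the soft counting argument above and $\Gamma^{-1}$ never needs to be unwound.
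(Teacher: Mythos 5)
Your argument is correct, but be aware that the paper itself offers no proof of Lemma \ref{lem:no-xing}: the lemma is simply declared to be ``a property of the bijection $\Gamma$'' of \cite{CJK}, supported only by the preceding observation that a fixed point lies on no crossing, and the reader is implicitly sent to the construction in \cite[Subsection 7.1]{CJK} for both directions. Your forward implication is essentially that same appeal to the construction --- a positive fixed point is a transient of weak-excedance type, hence a straight level step, carrying no factor $t$ and, by the no-crossing observation, no factor $q$ --- so it inherits exactly the dependence on the unreproduced case analysis of \cite{CJK} that the thesis also has (in particular, the fact that the $q$-exponent attached to a position records only crossings involving that position). What you add, and what the paper lacks, is a self-contained converse: deleting the $j$-th step of a path in $\M_n$ whose $j$-th step is $\L$ with weight $y^2$ leaves every other step at its original height, hence with its original admissible weight set, and yields an element of $\M_{n-1}$; reinsertion is inverse because $y^2=y^2q^0$ is admissible for $\L^{(h)}$ at every $h\ge 0$ and a straight level step never violates the no-wavy-step-on-the-axis condition; combined with $\#\{\sigma\in B_n:\sigma_j=j\}=|B_{n-1}|=|\M_{n-1}|$ (the latter by Theorem \ref{thm:Corteel}) and the injectivity of $\Gamma$, this pins down the fibre over the relevant set of paths without unwinding $\Gamma^{-1}$. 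One caveat applies equally to your write-up and to the lemma as stated: by the Remark preceding Theorem \ref{thm:Corteel} the paths used here are the \cite{CJK} paths read backward, so the step indexed $j$ need not literally be the one attached to the element $j$; this does not affect the lemma's only application (a signed permutation is fixed-point-free iff its path has no straight level step of weight $y^2$ in any position), and your counting argument is insensitive to it for the same reason.
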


Let $\M^{*}_n\subset \M_n$ be the subset consisting of the paths containing no straight level steps with weight $y^2$. It follows from Lemma \ref{lem:no-xing} that the bijection $\Gamma:B_n\rightarrow\M_n$ induces a bijection between $B^{*}_n$ and $\M^{*}_n$ such that
\begin{equation} \label{eqn:expression-B*n}
\rho(\M^{*}_n)=\sum_{\sigma\in B^{*}_n} y^{\fwex(\sigma)}t^{\nega(\sigma)}q^{\cro_B(\sigma)} .
\end{equation}
and the weight scheme for $\M_n^*$ is the following:
\begin{itemize}
\item $\rho(\U^{(h)})\in\{y^2,y^2q,\dots,y^2q^{h}\}\cup\{ytq^{h},ytq^{h+1},\dots,ytq^{2h}\}$,
\item $\rho(\L^{(h)})\in\{y^2,y^2q,\dots,y^2q^{h}\}\cup\{ytq^{h},ytq^{h+1},\dots,ytq^{2h}\}$ for $h\ge 1$\\
and $\rho(\L^{(0)})\in\{yt\}$ for $h=0$.
\item $\rho(\W^{(h)})\in\{1,q,\dots,q^{h-1}\}\cup\{ytq^{h},ytq^{h+1},\dots,ytq^{2h-1}\}$ for $h\ge 1$,
\item $\rho(\D^{(h+1)})\in\{1,q,\dots,q^{h}\}\cup\{ytq^{h+1},ytq^{h+2},\dots,ytq^{2h+1}\}$.
\end{itemize}

We shall establish an involution $\Psi_2:\M^{*}_n\rightarrow\M^{*}_n$ that changes the weight of a path by the factor $y^2q$, with the following set of restricted paths as the fixed points.

\begin{defi} \label{def:Gn} {\rm
Let $\G_n\subset\M^{*}_n$ be the subset consisting of the paths satisfying the following conditions. For $h\ge 0$,
\begin{itemize}
\item $\rho(\U^{(h)},\D^{(h+1)})=(y^2q^a,q^b)$ or $(ytq^{h+a},ytq^{h+1+b})$ for some $a,b\in\{0,1,\dots, h\}$, for any matching pair $(\U^{(h)},\D^{(h+1)})$,
	\item $\rho(\L^{(h)})\in\{ytq^{h}, ytq^{h+1},\dots,ytq^{2h}\}$,
	\item $\rho(\W^{(h)})\in\{ytq^{h},ytq^{h+1},\dots,ytq^{2h-1}\}$ for $h\ge 1$.
\end{itemize}
}
\end{defi}
	
Comparing the weight functions of the paths in $\G_n$ and in the set $\T^{*}_n$ given in Proposition \ref{pro:T*n-weighting},  the following result can be proved by the same argument as in the proof of Lemma \ref{lem:Fn=y^nRn}.
	
\begin{lem} \label{lem:Gn=y^nQn} We have
\[
\rho(\G_n)=y^n Q_n(t,q).
\]
\end{lem}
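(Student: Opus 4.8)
The plan is to mimic exactly the proof of Lemma \ref{lem:Fn=y^nRn}, now comparing the fixed-point set $\G_n$ against the weight scheme $\T^*_n$ from Proposition \ref{pro:T*n-weighting} instead of $\F_n$ against $\T_n$. First I would observe that, for any path $\mu\in\G_n$, the weight $\rho(\mu)$ always carries the factor $y^n$: by the conditions in Definition \ref{def:Gn}, each matching pair $(\U^{(h)},\D^{(h+1)})$ contributes a factor of the form $y^2q^{a+b}$ or $y^2(tq^{\cdots})(tq^{\cdots})$ — in either case exactly $y^2$ — while each level step, straight $\L^{(h)}$ or wavy $\W^{(h)}$, contributes exactly $y$ (from the $ytq^{\cdots}$ weights). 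Since the up steps and down steps come in matching pairs and the remaining steps are level steps, the total power of $y$ is $2\cdot(\#\text{matching pairs})+(\#\text{level steps})=n$. So $\rho(\G_n)=y^n\cdot\rho(\G_n)\big|_{y=1}$, where setting $y=1$ is just bookkeeping for the remaining $t,q$-content.

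Next I would strip the factor $y^n$ and match the residual $t,q$-weights monomial-by-monomial against the scheme for $\T^*_n$. Using the remark preceding Lemma \ref{lem:Fn=y^nRn} that a matching pair's weight may be freely redistributed as long as the product $ab$ is preserved, I can reassign the weight of each matching pair $(\U^{(h)},\D^{(h+1)})$ in $\G_n$ so that the up step absorbs the combined weight and the down step absorbs the complementary factor, aligning with the options $\rho(\U^{(h)})\in\{1,q,\dots,q^{h}\}\cup\{t^2q^{2h+1},\dots,t^2q^{3h+1}\}$ and $\rho(\D^{(h+1)})\in\{1,q,\dots,q^{h}\}$ of $\T^*_n$: the pair $(y^2q^a,q^b)$ with $a,b\in\{0,\dots,h\}$ contributes $q^{a+b}$ ranging over $\{1,\dots,q^{2h}\}$ split as (up weight $q^a$)$\times$(down weight $q^b$), and the pair $(ytq^{h+a},ytq^{h+1+b})$ contributes $t^2q^{2h+1+a+b}$ with $a+b\in\{0,\dots,2h\}$, i.e. $t^2q^{2h+1},\dots,t^2q^{4h+1}$ — then redistribute so the up step carries $t^2q^{2h+1},\dots,t^2q^{3h+1}$ and the down step carries $q^0,\dots,q^h$. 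The level-step weights $\rho(\L^{(h)})\in\{ytq^{h},\dots,ytq^{2h}\}$ and $\rho(\W^{(h)})\in\{ytq^{h},\dots,ytq^{2h-1}\}$ ($h\ge 1$) match the $\T^*_n$ options $\{tq^h,\dots,tq^{2h}\}$ and $\{tq^h,\dots,tq^{2h-1}\}$ after removing the $y$; the constraint "no wavy level steps on the $x$-axis" is inherited from $\M^*_n\supset\G_n$ and is present in $\T^*_n$ as well. Hence $\rho(\G_n)$ and $y^n\cdot\rho(\T^*_n)$ have the same generating function in $x$, and by Proposition \ref{pro:T*n-weighting} this generating function is $\sum_{n\ge 0}y^nQ_n(t,q)x^n$, which gives $\rho(\G_n)=y^nQ_n(t,q)$.

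The only delicate point — the "main obstacle," though it is a mild one — is verifying that the redistribution of matching-pair weights is bijective and index-range-consistent at every height $h$: one must check that the multiset of products $\{q^{a+b}:a,b\in\{0,\dots,h\}\}$ (with multiplicity) coincides with the multiset $\{q^{a'}\cdot q^{b'}:a'\in\{0,\dots,h\},\,b'\in\{0,\dots,h\}\}$ expected by $\T^*_n$, and likewise for the $t^2$-branch that $\{q^{h+a}\cdot q^{h+1+b}:a,b\in\{0,\dots,h\}\}$ rewrites as $\{t^2q^{2h+1+a'}\cdot q^{b'}:a'\in\{0,\dots,h\},\,b'\in\{0,\dots,h\}\}$; both are immediate since in each case we are just re-indexing the same convolution $\{0,\dots,h\}+\{0,\dots,h\}$. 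Since the generating-function identity only sees total weights, this redistribution is harmless, exactly as invoked in Lemma \ref{lem:Fn=y^nRn}. I would therefore present the proof in one or two sentences: observe the $y^n$ factor, invoke the matching-pair redistribution and Theorem \ref{thm:Flajolet} to identify generating functions, and conclude via Proposition \ref{pro:T*n-weighting}.
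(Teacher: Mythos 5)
Your proof is correct and follows essentially the same route as the paper: the paper's proof of Lemma \ref{lem:Gn=y^nQn} simply says to compare the weight functions of $\G_n$ and $\T^*_n$ and repeat the argument of Lemma \ref{lem:Fn=y^nRn} (extract the $y^n$ factor, use the matching-pair weight redistribution, and identify the generating functions via Theorem \ref{thm:Flajolet} and Proposition \ref{pro:T*n-weighting}). You have merely spelled out the height-by-height verification that the paper leaves implicit, and your checks are accurate.
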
	
	
We describe the construction of the map $\Psi_2:\M^{*}_n\rightarrow\M^{*}_n$.
	
\smallskip	
\noindent
{\bf Algorithm  B}

Given a path $\mu\in\M^{*}_n$, the corresponding path $\Psi_2(\mu)$ is constructed by the following procedure.
\begin{enumerate}
	\item[(B1)] If $\mu$ contains no straight level steps $\L^{(h)}$ with weight $y^2q^a$ or wavy level steps $\W^{(h)}$ with weight $q^{a-1}$ for any $a\in\{1,2,\dots, h\}$ then go to (B2). Otherwise, among such level steps find the first step $z$, say $z=\L^{(h)}$ ($\W^{(h)}$, respectively) with weight $\rho(z)=y^2q^a$ ($q^{a-1}$, respectively), then the path $\Psi_2(\mu)$ is obtained by replacing $z$ by $\W^{(h)}$ ($\L^{(h)}$, respectively) with weight $q^{a-1}$ ($y^2q^a$, respectively).
	\item[(B2)]  If $\mu$ contains no matching pairs $(\U^{(h)},\D^{(h+1)})$ with $\rho(\U^{(h)},\D^{(h+1)})=(y^2q^a, ytq^{h+1+b})$ or $(ytq^{h+a}, q^{b})$ for any $a,b\in\{0,1,\dots, h\}$ then go to (B3). Otherwise, among such pairs find the first pair with weight, say $(y^2q^a, ytq^{h+1+b})$ ($(ytq^{h+a}, q^{b})$, respectively) then the corresponding path is obtained by replacing the weight of the pair by $(ytq^{h+a}, q^{b})$ ($(y^2q^a, ytq^{h+1+b})$, respectively).
	\item[(B3)] Then we have $\mu\in\G_n$. Let $\Psi_2(\mu)=\mu$.
	\end{enumerate}

\smallskip
By the possible weighted steps of the paths in $\M^{*}_n$, we have the following result.
	
\begin{pro} \label{pro:map-psi2} The map $\Psi_2$ established by Algorithm B is an involution on the set $\M^{*}_n$ such that for any path $\mu\in\M^{*}_n$,  $\Psi_2(\mu)=\mu$ if $\mu\in\G_n$, and $\rho(\Psi_2(\mu))=y^2q\rho(\mu)$ or $y^{-2}q^{-1}\rho(\mu)$ otherwise.
\end{pro}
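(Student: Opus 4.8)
The plan is to follow the same template as the proof of Proposition~\ref{pro:map-psi1}: show that each reassignment prescribed by Algorithm~B lands back in $\M^{*}_n$, that the algorithm is unambiguous, that iterating it twice is the identity, and that the set on which it acts trivially is precisely $\G_n$. Throughout, the key structural observation is that Algorithm~B never alters the underlying lattice-path shape: step (B1) only exchanges a straight level step for a wavy one (and vice versa) at the same height, and step (B2) only relabels the weights of an up step and its matched down step. Hence the notions ``matching pair $(\U^{(h)},\D^{(h+1)})$'' and ``the first offending level step or matching pair'' make sense and are carried along unchanged.

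First I would check well-definedness. In (B1) a straight level step of weight $y^2q^a$ with $a\in\{1,\dots,h\}$ is replaced by a wavy level step of weight $q^{a-1}$, which is an admissible weight for $\W^{(h)}$ precisely because $a-1\in\{0,\dots,h-1\}$; conversely a wavy level step of weight $q^{a-1}$ becomes a straight level step of weight $y^2q^a$, admissible for $\L^{(h)}$. Since $a\ge 1$ forces $h\ge 1$, neither move ever touches a level step on the $x$-axis, so the side conditions of $\M^{*}_n$ at height $0$ (no wavy step there, $\rho(\L^{(0)})=yt$) are respected. In (B2) the four possible weight types of a matching pair $(\U^{(h)},\D^{(h+1)})$ are $(y^2q^a,q^b)$, $(y^2q^a,ytq^{h+1+b})$, $(ytq^{h+a},q^b)$ and $(ytq^{h+a},ytq^{h+1+b})$ with $a,b\in\{0,\dots,h\}$ in each case, and the move exchanges the two ``mixed'' types while preserving the index pair $(a,b)$; comparing with the weight scheme of $\M^{*}_n$ shows both images are again admissible.

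Next I would verify the involution property together with the fixed-point set. In (B1) the step produced is again offending of the same kind (a wavy step of weight $q^{a-1}$ after an $\L\to\W$ move, or an $\L$ of weight $y^2q^a$ after the reverse), and all earlier steps are untouched, so running Algorithm~B on $\Psi_2(\mu)$ locates the same position and undoes the move; the same argument applies in (B2), where the swapped pair is again mixed and is still the first offending pair because level steps (hence the outcome of (B1)) are left intact. Thus $\Psi_2\circ\Psi_2=\mathrm{id}$. A path escapes both (B1) and (B2) exactly when it has no straight level step of weight $y^2q^a$ with $a\ge1$ — equivalently, since $\M^{*}_n$ has no $\L^{(h)}$ of weight $y^2$, every straight level step carries a $t$-weight — no wavy step of non-$t$-weight, and no matching pair of mixed type; by Definition~\ref{def:Gn} this is exactly membership in $\G_n$. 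Finally, for the weight change the ratio of new to old weight in (B1) is $q^{a-1}/(y^2q^a)=y^{-2}q^{-1}$ (and $y^2q$ for the reverse), while in (B2) the ratio of the pair products is $ytq^{h+a}\cdot q^{b}/(y^2q^a\cdot ytq^{h+1+b})=y^{-2}q^{-1}$ (again $y^2q$ for the reverse); since $\Psi_2$ pairs each $\mu\notin\G_n$ with $\Psi_2(\mu)$ in a $2$-cycle, one of the two is multiplied by $y^2q$ and the other by $y^{-2}q^{-1}$, as claimed.

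The only real work is the range bookkeeping in the second and third paragraphs: one must check that the ``mixed'' configurations acted on by (B1) and (B2), together with the ``pure'' configurations collected in $\G_n$, exhaust every weighted step and every matching pair allowed in $\M^{*}_n$, and that the low-height edge cases ($\W^{(0)}$ forbidden, $\rho(\L^{(0)})=yt$ only, the exponent shift by $h$ in $\rho(\D^{(h+1)})$) line up consistently. Once the weight ranges are tabulated this is mechanical, exactly as for Proposition~\ref{pro:map-psi1}.
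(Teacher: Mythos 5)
Your proposal is correct, and it is essentially an expanded version of the paper's argument: the paper dismisses this proposition with the single remark that it follows "by the possible weighted steps of the paths in $\M^{*}_n$," which is exactly the range bookkeeping you carry out. Your explicit checks (admissibility of the swapped weights, the ratio $y^{-2}q^{-1}$ in both (B1) and (B2), persistence of the first offending step or pair, and the use of the defining property of $\M^{*}_n$ that no straight level step has weight $y^2$ to identify the fixed-point set with $\G_n$) supply precisely the details the paper leaves implicit.
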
	

Now we are ready to prove Theorem \ref{thm:signFwexB*}.

\smallskip
\noindent
\emph{Proof of Theorem \ref{thm:signFwexB*}.} For (i), the expression $\sum_{\sigma\in B^{*}_n} \big(-\frac{1}{q}\big)^{\lfloor \frac{\fwex(\sigma)}{2}\rfloor} t^{\nega(\sigma)}q^{\cro_B(\sigma)}$, by Eq.\,(\ref{eqn:expression-B*n}), equals the sum of the real part and the imaginary part multiplying $\sqrt{q}$ of the polynomial $\rho(\M^{*}_n)$ evaluated at $y=\sqrt{\frac{-1}{q}}$. By Proposition \ref{pro:map-psi2} and Lemma \ref{lem:Gn=y^nQn}, we have
\begin{align*}
\rho(\M^{*}_n)\big|_{y=\sqrt{\frac{-1}{q}}}
            &= \rho(\G_n)\big|_{y=\sqrt{\frac{-1}{q}}}  \\
            &= y^{n} Q_{n}(t,q)\big|_{y=\sqrt{\frac{-1}{q}}}\\
            &=\begin{cases}
			\left(\frac{-1}{q}\right)^{\frac{n}{2}}Q_n(t,q) & \mbox{, if $n$ is even};\\
			\sqrt{\frac{-1}{q}}\left(\frac{-1}{q}\right)^{\frac{n-1}{2}}Q_n(t,q) & \mbox{, if $n$ is odd.}
	  \end{cases}
\end{align*}
Hence 
\begin{equation} \label{eqn:evaluation-Bn*-floor}
\sum_{\sigma\in B^{*}_n}\left(-\frac{1}{q}\right)^{\lfloor \frac{\fwex(\sigma)}{2}\rfloor} t^{\nega(\sigma)}q^{\cro_B(\sigma)}
    =\left(-\frac{1}{q}\right)^{\lfloor\frac{n}{2}\rfloor}Q_n(t,q).  
\end{equation}
For (ii), the expression $\sum_{\sigma\in B^{*}_n} \big(-\frac{1}{q}\big)^{\lceil \frac{\fwex(\sigma)}{2}\rceil} t^{\nega(\sigma)}q^{\cro_B(\sigma)}$ can be obtained by subtracting the real part of $\rho(\M_n^{*})$ evaluated at $y=\sqrt{\frac{-1}{q}}$ with the imaginary part multplying $\sqrt{q}$. Therefore, we have  
\begin{equation} \label{eqn:evaluation-Bn*-ceil}
\sum_{\sigma\in B^{*}_n}\left(-\frac{1}{q}\right)^{\lceil \frac{\fwex(\sigma)}{2}\rceil} t^{\nega(\sigma)}q^{\cro_B(\sigma)}
    =\left(-\frac{1}{q}\right)^{\lceil\frac{n}{2}\rceil}  Q_n(t,q).  
\end{equation}	\qed

\smallskip
\noindent
\emph{Proof of Corollary \ref{thm: signFwexD*}}
Recall that $D^{*}_n\subset B^{*}_n$ is the subset consisting of the fixed point-free signed permutations with even number of negative entries.
Let $\M^{*'}_n\subset \M^{*}_n$ be the subset consisting of the paths whose weights contain even powers of $t$.
Notice that the bijection $\Gamma: B_n\rightarrow\M_n$ also induces a bijection between $D^{*}_n$ and $\M^{*'}_n$ such that
\begin{equation} \label{eqn:D*n-M*'n}
\sum_{\sigma\in D^{*}_n} y^{\fwex(\sigma)}t^{\nega(\sigma)}q^{\cro_B(\sigma)} = \rho(\M^{*'}_n).
\end{equation}

Consider the polynomial $\rho(\M^{*'}_n)$ evaluated at $y=\sqrt{\frac{-1}{q}}$.
By Proposition \ref{pro:map-psi2}, we observe that the involution $\Psi_2:\M^{*}_n\rightarrow\M^{*}_n$ preserves the powers of $t$ of the weight of the paths. Moreover, for the fixed points $\mu\in\G_n$ of the map $\Psi_2$, we observe that the power of $t$ of $\rho(\mu)$ has the same parity of $n$. By the expression in Eq.\,(\ref{eqn:evaluation-Bn*-floor}) and Eq.\,(\ref{eqn:evaluation-Bn*-ceil}), we have
\[
\sum_{\sigma\in D^{*}_n}\left(-\frac{1}{q}\right)^{\lfloor \frac{\fwex(\sigma)}{2}\rfloor} t^{\nega(\sigma)}q^{\cro_B(\sigma)}=\left\{
               \begin{array}{ll}
                \big(-\frac{1}{q}\big)^{\frac{n}{2}} Q_n(t,q) &\mbox{if $n$ is even,} \\
                0 &\mbox{if $n$ is odd.}
               \end{array}
               \right.
\] 	
Since $\lfloor\frac{\fwex(\sigma)}{2}\rfloor=\lceil\frac{\fwex(\sigma)}{2}\rceil=\frac{\fwex(\sigma)}{2}$ for $\sigma\in D_n^{*}$. The same result is obtained when we replace $\lfloor \frac{\fwex(\sigma)}{2}\rfloor$ with $\lceil \frac{\fwex(\sigma)}{2}\rceil$. Hence the proof of Corollary \ref{thm: signFwexD*} is completed. \qed

\chapter{Snakes and $(t,q)$-analogue of derivative polynomials}

In this chapter we shall give a combinatorial interpretation for the polynomials $Q_n(t,q)$ and $R_n(t,q)$ as the enumerators for variants of the snakes in signed permutations. We make use of a non-recursive approach, based on Flajolet's combinatorics of continued fractions, to establish a bijection between the snakes of $\SS^{0}_n$ ($\SS^{00}_{n+1}$, respectively) and the weighted bicolored Motzkin paths of $\T^{*}_n$ given in Proposition \ref{pro:T*n-weighting} ($\T_n$ given in Proposition \ref{pro:Tn-weighting}, respectively). The bijections are constructed in
the spirit of the classical Fran\c con--Viennot bijection \cite{FV} by encoding the sign changes and consecutive up-down patterns of snakes into weighted steps of the Motzkin paths.

\section{$\cs$-vectors and blocks}
Given a permutation $\pi=\pi_1\pi_2\cdots\pi_n\in\mathfrak{S}_n$, Arnol'd \cite{Arnold} devised the following rules to determine signs $\epsilon=\epsilon_1\epsilon_2\cdots\epsilon_n\in\{\pm\}^n$ such that $(\epsilon_1\pi_1,\epsilon_2\pi_2,\dots,\epsilon_n\pi_n)$ is a snake; see also \cite{JV}. For $2\le i\le n-1$,
\begin{itemize}
	\item[(R1)] if $\pi_{i-1}<\pi_i<\pi_{i+1}$ then $\epsilon_i\ne \epsilon_{i+1}$,
	\item[(R2)] if $\pi_{i-1}>\pi_i>\pi_{i+1}$ then $\epsilon_{i-1}\ne \epsilon_{i}$,
	\item[(R3)] if $\pi_{i-1}>\pi_i<\pi_{i+1}$ then $\epsilon_{i-1}= \epsilon_{i+1}$.
\end{itemize}

For a snake $\sigma=\sigma_1\sigma_2\dots\sigma_n\in \SS^{0}_n$ ($\SS^{00}_n$, respectively), we make the convention that $\sigma_0=0$ and $\sigma_{n+1}=(-1)^{n}(n+1)$ ($\sigma_0=\sigma_{n+1}=0$, respectively). Let $|\sigma|$ denote the permutation obtained from $\sigma$ by removing the negative signs of $\sigma$, i.e., $|\sigma|_i=|\sigma_i|$ for $0\le i\le n+1$.

Recall that the number of sign changes of $\sigma$ is given by
$\cs(\sigma):=\#\{i: \sigma_i\sigma_{i+1}<0, 0\le i\le n\}$.
For $1\le j\le n$, let $j=|\sigma|_i$ for some $i\in [n]$, and define $\cs(\sigma,j)$, the number of sign changes recorded by the element $j$, by
\[
\cs(\sigma,j)=\left\{
             \begin{array}{ll}
             0              &\mbox{if $|\sigma|_{i-1}>j<|\sigma|_{i+1}$ and $\sigma_{i-1}\sigma_i>0, \sigma_i\sigma_{i+1}>0$,} \\
             2              &\mbox{if $|\sigma|_{i-1}>j<|\sigma|_{i+1}$ and $\sigma_{i-1}\sigma_i<0, \sigma_i\sigma_{i+1}<0$,} \\
             1              &\mbox{if $|\sigma|_{i-1}<j<|\sigma|_{i+1}$ or $|\sigma|_{i-1}>j>|\sigma|_{i+1}$,} \\
             0              &\mbox{if $|\sigma|_{i-1}<j>|\sigma|_{i+1}$.}
             \end{array}
       \right.
\]
We call the sequence $(\cs(\sigma,1),\dots,\cs(\sigma,n))$ the $\cs$-\emph{vector} of $\sigma$.

Following the rules (R1)-(R3) and the condition $\sigma_1>0$, we observe that the signs of the entries $\sigma_1,\dots,\sigma_n$ of $\sigma$ can be recovered from left to right by $|\sigma|$ and the $\cs$-vector of $\sigma$.

\begin{lem} \label{lem:sign-changes} For any snake $\sigma\in\SS^{0}_n$ ($\SS^{00}_n$, respectively), the sign of each entry of $\sigma$ is uniquely determined by $|\sigma|$ and the vector $(\cs(\sigma,1), \dots, \cs(\sigma,n))$. Moreover, we have
\[
\cs(\sigma)=\sum_{j=1}^{n} \cs(\sigma,j).
\]
\end{lem}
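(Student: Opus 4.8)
The plan is to verify both assertions by a left-to-right induction on the positions $1,2,\dots,n$, reading off the signs of $\sigma$ one entry at a time. For the first assertion I would argue that, given $|\sigma|$ and the $\cs$-vector, the sign $\epsilon_i$ of $\sigma_i$ is forced once $\epsilon_1,\dots,\epsilon_{i-1}$ are known. The base case is fixed by the hypothesis $\sigma_1>0$ (so $\epsilon_1=+$), together with the conventions $\sigma_0=0$ for $\SS^0_n$ and $\SS^{00}_n$. For the inductive step, the key observation is that the local up-down shape of $|\sigma|$ at position $i-1$ is exactly one of the four cases listed in rules (R1)--(R3) together with the ``peak'' case $|\sigma|_{i-2}<|\sigma|_{i-1}>|\sigma|_i$; and in each case the rule tells us the relation between $\epsilon_{i-1}$ and $\epsilon_i$ \emph{except} at a valley of $|\sigma|$ (case (R3), where $\epsilon_{i-1}=\epsilon_{i+1}$ links non-adjacent positions) and at a peak. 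The point is that the value $\cs(\sigma,|\sigma|_{i-1})$ was precisely \emph{designed} to resolve this ambiguity: at a valley the value $0$ versus $2$ records whether the two incident sign products agree in sign or not, which (given $\epsilon_{i-1}$) pins down both $\epsilon_{i-2}$-consistency and $\epsilon_i$; at a non-valley, $\cs(\sigma,|\sigma|_{i-1})\in\{0,1\}$ records whether a sign change occurs at that step. I would organize this as a short case analysis on the shape of $|\sigma|$ at position $i-1$, checking in each case that $\epsilon_i$ is uniquely determined by $\epsilon_{i-1}$ (and $\epsilon_{i-2}$, already known) plus the relevant $\cs$-vector entry. One must also handle the right boundary, where the convention $\sigma_{n+1}=(-1)^n(n+1)$ (resp.\ $\sigma_{n+1}=0$) supplies the data needed to determine $\epsilon_n$.

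For the second assertion, the identity $\cs(\sigma)=\sum_{j=1}^n \cs(\sigma,j)$, I would prove it by a double-counting / local contribution argument. Each sign change of $\sigma$ is an index $i$ with $0\le i\le n$ and $\sigma_i\sigma_{i+1}<0$; I want to attribute it to one of the elements $j=|\sigma|_k$. The natural bookkeeping is: the sign change at position $i$ is ``recorded'' at the entry where a valley or a monotone run occurs. Concretely, I would check that for each $j=|\sigma|_i$, the quantity $\cs(\sigma,j)$ counts exactly the sign changes among $\{\text{step } i-1,\ \text{step } i\}$ that are ``assigned'' to position $i$ under the convention that a monotone step contributes to its lower endpoint and a valley absorbs both of its incident steps while a peak absorbs neither. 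Summing over $j$ then telescopes to the total $\cs(\sigma)$. The cleanest way to present this is to go through the same four-case split as in the definition of $\cs(\sigma,j)$ and confirm the local count matches, then observe that every step $i$ (for $0\le i\le n$) is counted once and only once across all positions.

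The main obstacle I expect is the boundary bookkeeping and making the ``assignment'' of sign-change steps to elements genuinely well-defined and exhaustive: one has to be careful that the leftmost step (between $\sigma_0$ and $\sigma_1$) and the rightmost step (between $\sigma_n$ and $\sigma_{n+1}$) are each counted exactly once, using the conventions for $\sigma_0$ and $\sigma_{n+1}$ that differ between $\SS^0_n$ and $\SS^{00}_n$. Internally, the routine case analysis is straightforward but must be done with care at valleys, where a single element can record $2$ and thereby account for two consecutive sign changes — one needs to confirm this never double-counts a step also recorded elsewhere. Once the local contributions are pinned down, both the uniqueness of the sign reconstruction and the summation formula follow simultaneously from the same case table, so I would set up that table once and harvest both conclusions from it.
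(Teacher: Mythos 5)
Your plan is correct and follows essentially the same route as the paper's proof: a left-to-right induction determining each sign from the local up-down shape of $|\sigma|$ via rules (R1)--(R3), with the valley ambiguity resolved by the $\cs$-vector entry, and the summation identity obtained by attributing the sign change between $\sigma_{i-1}$ and $\sigma_i$ to the smaller of $|\sigma|_{i-1}$ and $|\sigma|_i$ (so valleys record two changes, peaks none). The paper carries this out with exactly the case split you describe, so no further comparison is needed.
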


\begin{proof} For the initial condition, we have $\sigma_0=0$ and $\sigma_1>0$. For $i\ge 2$, we determine the sign of $\sigma_i$ according to the following cases:

Case 1. $|\sigma|_{i-1}>|\sigma|_i$. There are two cases. If $|\sigma|_i>|\sigma|_{i+1}$ then by (R2) $\sigma_i$ has the opposite sign of $\sigma_{i-1}$.
Otherwise, $|\sigma|_i<|\sigma|_{i+1}$, and by (R3) $\sigma_i$ has the opposite (same, respectively) sign of $\sigma_{i-1}$ if $\cs(|\sigma|,|\sigma|_i)=2$ (0, respectively). Hence the sign change between $\sigma_{i-1}$ and $\sigma_i$ is recorded by $\cs(|\sigma|,|\sigma|_i)$.

Case 2. $|\sigma|_{i-1}<|\sigma|_i$. There are two cases. If $|\sigma|_{i-2}<|\sigma|_{i-1}$ then by (R1) $\sigma_i$ has the opposite sign of $\sigma_{i-1}$.
Otherwise, $|\sigma|_{i-2}>|\sigma|_{i-1}$, and by (R3) $\sigma_i$ has the opposite (same, respectively) sign of $\sigma_{i-1}$ if $\cs(|\sigma|,|\sigma|_{i-1})=2$ (0, respectively). Hence the sign change between $\sigma_{i-1}$ and $\sigma_i$ is recorded by $\cs(|\sigma|,|\sigma|_{i-1})$.

The assertions follow.
\end{proof}

\smallskip
\begin{exa} \label{exa:sign-snake} {\rm Given a snake $\sigma=((0),5,-2,4,-7,-1,-8,10,-9,6,3,(11))\in\SS^{0}_{10}$, note that $\cs(\sigma)=6$ and the $\cs$-vector of $\sigma$ is $(0,2,0,1,0,1,0,1,1,0)$.

On the other hand, given the permutation $|\sigma|=(5,2,4,7,1,8,10,9,6,3)\in\mathfrak{S}_{10}$ and the $\cs$-vector $(0,2,0,1,0,1,0,1,1,0)$, we observe that the snake $\sigma$ can be recovered, following the rules (R1)-(R3).
}
\end{exa}

Let $\mathfrak{S}^{0}_n$ and $\mathfrak{S}^{00}_n$ denote two `copies' of $\mathfrak{S}_n$ with the following convention
\begin{itemize}
\item $\pi_0=0$ and $\pi_{n+1}=n+1$ if $\pi\in\mathfrak{S}^{0}_n$,
\item $\pi_0=\pi_{n+1}=0$ if $\pi\in\mathfrak{S}^{00}_n$.
\end{itemize}

Given a permutation $\pi=\pi_1\pi_2\cdots\pi_n\in\mathfrak{S}^{0}_n$ or $\mathfrak{S}^{00}_n$, by a \emph{block} of $\pi$ restricted to $\{0,1,\dots,k\}$ we mean a maximal sequence of consecutive entries $\pi_i\pi_{i+1}\cdots\pi_j\subseteq\{0,1,\dots,k\}$ for some $i\le j$. For $0\le k\le n$, let $\alpha(\pi,k)$ be the number of blocks of $\pi$ restricted to $\{0,1\dots,k\}$, and let $\beta(\pi,k)$ be the number of such blocks on the right-hand side of the block containing the element $k$. By the convention on $\sigma_0$ and $\sigma_{n+1}$, for $k=0$ we have $\alpha(\pi,0)=1$ if $\pi\in\mathfrak{S}^{0}_n$, while $\alpha(\pi,0)=2$ if $\pi\in\mathfrak{S}^{00}_n$.

\begin{exa} \label{exa:alpha-beta-0} {\rm
Let $\pi=((0),5,2,4,7,1,8,10,9,6,3,(11))\in\mathfrak{S}^{0}_{10}$. Notice that $\alpha(\pi,6)=3$ and $\beta(\pi,6)=0$. The three blocks of $\pi$ restricted to $\{0,1,\dots,6\}$ are underlined as shown below.
\[
\begin{tabular}{cccccccccccc}
    (0) & 5 & 2 & 4 & 7 & 1 & 8 & 10 & 9 & 6 & 3 & (11) \\
    \cline{1-4} \cline{6-6}  \cline{10-11} \\

\end{tabular}
\]
For $0\le k\le 10$, the sequences of $\alpha(\pi,k)$ and $\beta(\pi,k)$ of $\pi$ are shown in Table \ref{tab:block-vector}.
}
\end{exa}

\begin{table}[ht]
\caption{The sequences $\alpha,\beta$ of $\pi=((0),5,2,4,7,1,8,10,9,6,3,(11))$.}
\begin{center}
\begin{tabular}{c|ccccccccccc}
    $k$           & 0 & 1 & 2 & 3 & 4 & 5 & 6 & 7 & 8 & 9 & 10 \\
    \hline
  $\alpha(\pi,k)$ & 1 & 2 & 3 & 4 & 4 & 3 & 3 & 2 & 2 & 2 & 1 \\
   $\beta(\pi,k)$ & 0 & 0 & 1 & 0 & 2 & 2 & 0 & 1 & 1 & 0 & 0
\end{tabular}
\end{center}
\label{tab:block-vector}
\end{table}

\section{The enumerator $Q_n(t,q)$ of $\SS^{0}_n$.}
In this section we shall encode the permutation $|\sigma|$ by a weighted bicolored Motzkin path. We construct a bijection between $\SS^{0}_n$ and the set of weighted bicolored Motzkin paths $\T_n^{*}$ whose generating fucntion of weight is equal $Q_n(t,q)$.   

We shall establish a map $\Lambda_1:\SS^{0}_n\rightarrow\T^{*}_n$ by the following procedure.

\smallskip
\noindent
{\bf Algorithm C.}

Given a snake $\sigma=\sigma_1\sigma_2\cdots\sigma_n\in\SS^{0}_n$, we associate $\sigma$ with a weighted path $\Lambda_1(\sigma)=z_1z_2\cdots z_n$.
For $1\le j\le n$, let $j=|\sigma|_i$ for some $i\in [n]$ and define the step $z_j$ according to the following cases:
\begin{enumerate}
\item if $|\sigma|_{i-1}>j<|\sigma|_{i+1}$ then $z_j=\U$ with weight
\[
\rho(z_j)=\left\{ \begin{array}{ll}
                   q^{\beta(|\sigma|,j)} &\mbox{ if $\sigma_{i-1}\sigma_i>0$ and $\sigma_i\sigma_{i+1}>0$,} \\
                   t^2q^{\beta(|\sigma|,j)+2\alpha(|\sigma|,j)-3} &\mbox{ if $\sigma_{i-1}\sigma_i<0$ and $\sigma_i\sigma_{i+1}<0$,}
                  \end{array}
          \right.
\]
\item if $|\sigma|_{i-1}<j<|\sigma|_{i+1}$ then $z_j=\L$ with weight $tq^{\beta(|\sigma|,j)+\alpha({|\sigma|,j})-1}$,
\item if $|\sigma|_{i-1}>j>|\sigma|_{i+1}$ then $z_j=\W$ with weight $tq^{\beta(|\sigma|,j)+\alpha({|\sigma|,j})-1}$,
\item if $|\sigma|_{i-1}<j>|\sigma|_{i+1}$ then $z_j=\D$ with weight $q^{\beta(|\sigma|,j)}$.
\end{enumerate}

\smallskip
Notice that the value $\cs(\sigma,j)$ is encoded as the power of $t$ in $\rho(z_j)$. For convenience, the element $j$ in (i) is called a \emph{valley}, in (ii) a \emph{double ascent}, in (iii) a \emph{double descent}, and in (iv) a \emph{peak} of $\sigma$.

\smallskip
\begin{exa} \label{exa:snake-0} {\rm
Take the snake $\sigma=((0),5,-2,4,-7,-1,-8,10,-9,6,3,(11))\in\SS^{0}_{10}$. The $\cs$-vector of $\sigma$ is given in Example \ref{exa:sign-snake} and the sequences  $\alpha$, $\beta$ of $|\sigma|$ are given in Example \ref{exa:alpha-beta-0}. We observe that $z_1=\U$ with weight $q^{\beta(|\sigma|,1)}=1$ since the element 1 is a valley without sign-changes, and that $z_2=\U$ with weight $t^2q^{\beta(|\sigma|,2)+2\alpha(|\sigma|,2)-3}=t^2q^4$ since the element 2 is a valley with sign-changes. The path $\Lambda_1(\sigma)$ is shown in Figure \ref{fig:snake-path}.
}
\end{exa}

\begin{figure}[ht]
\begin{center}
\psfrag{1}[][][0.95]{$1$}
\psfrag{t2q4}[][][0.95]{$t^2q^4$}
\psfrag{tq5}[][][0.95]{$tq^5$}
\psfrag{q2}[][][0.95]{$q^2$}
\psfrag{tq2}[][][0.95]{$tq^2$}
\psfrag{q}[][][0.95]{$q$}
\psfrag{tq}[][][0.95]{$tq$}
\includegraphics[width=3.0in]{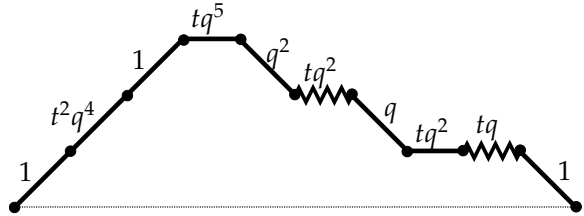}
\end{center}
\caption{\small The corresponding path of the snake in Example \ref{exa:snake-0}.}
\label{fig:snake-path}
\end{figure}

The constraints for the parameters $\alpha(|\sigma|,k)$ and $\beta(|\sigma|,k)$ of $|\sigma|$ are encoded in the height of the step $z_k\in\Lambda_1(\sigma)$.

\begin{lem} \label{lem:heights} For a snake $\sigma=\sigma_1\sigma_2\cdots\sigma_n\in\SS^{0}_n$, let $\Lambda_1(\sigma)=z_1z_2\cdots z_n$ be the path constructed by Algorithm C. For $1\le j\le n$, let $h_j$ be the height of the step $z_j$.  Then the following properties hold.
\begin{enumerate}
\item $h_j=\alpha(|\sigma|,j-1)-1$.
\item If $z_j=\W$ or $\D$ then $h_j\ge 1$ and $0\le \beta(|\sigma|,j)\le h_j-1$.
\item If $z_j=\U$ or $\L$ then $0\le \beta(|\sigma|,j)\le h_j$.
\end{enumerate}
\end{lem}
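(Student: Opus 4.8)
The plan is to prove (i) first, since (ii) and (iii) will follow from (i) together with the elementary combinatorics of how blocks merge when a new element is inserted. For (i), I would argue by tracking what happens to $\alpha(|\sigma|,k)$ as $k$ increases from $0$ to $n$, and matching this against the height profile of the Motzkin path $\Lambda_1(\sigma)$. The key observation is that the step $z_j$ is determined by whether $j$ is a valley, double ascent, double descent, or peak of $\sigma$, and these four cases correspond exactly to inserting the element $j$ into the partial arrangement of $\{0,1,\dots,j-1\}$ in a way that, respectively, creates a new block ($\U$: two larger neighbours, so $j$ sits alone), extends an existing block on one side ($\L$ or $\W$: exactly one larger neighbour, so the block count is unchanged), or merges two blocks into one ($\D$: two smaller neighbours). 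Thus $\alpha(|\sigma|,j)-\alpha(|\sigma|,j-1)$ equals $+1$, $0$, $0$, $-1$ according as $z_j$ is $\U$, $\L$, $\W$, $\D$ — which is precisely the height increment of a Motzkin step. Since $\alpha(|\sigma|,0)=1$ (as $\sigma\in\SS^0_n$, with $\sigma_0=0$) and the path starts at height $0$, an induction on $j$ gives $h_j=\alpha(|\sigma|,j-1)-1$.

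For (ii) and (iii), the bound $0\le\beta(|\sigma|,j)$ is immediate from the definition of $\beta$ as a count of blocks. For the upper bounds, I would use that $\beta(|\sigma|,j)$ counts the blocks of $|\sigma|$ restricted to $\{0,1,\dots,j\}$ lying strictly to the right of the block containing $j$, so $\beta(|\sigma|,j)\le \alpha(|\sigma|,j)-1$ always. When $z_j=\U$ or $\L$, inserting $j$ does not decrease the block count, so $\alpha(|\sigma|,j)-1\le \alpha(|\sigma|,j)-1$; more carefully, for a valley ($\U$) the element $j$ forms its own new block and $\alpha(|\sigma|,j)=\alpha(|\sigma|,j-1)+1=h_j+2$, so $\beta(|\sigma|,j)\le h_j+1-1=h_j$ — wait, one must check that $j$ itself is not the rightmost block, which it need not be, giving $\beta\le\alpha(|\sigma|,j)-1$; combined with $\alpha(|\sigma|,j)=h_j+2$ this yields $\beta(|\sigma|,j)\le h_j+1$, and the sharper bound $\beta\le h_j$ comes from noting that the block containing $j$ together with at least one block (the one containing $0$, on its left, since $|\sigma|_0=0<j$) forces the count on the right to drop by one more. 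For $z_j=\L$ the element $j$ joins a block to its right, so that block is not counted in $\beta$, and $\alpha(|\sigma|,j)=h_j+1$, giving $\beta(|\sigma|,j)\le h_j-1+1=h_j$ after the same boundary consideration. For $z_j=\W$ or $\D$, the element $j$ has a smaller left neighbour; since $|\sigma|_0=0$ the block containing $j$ is never the leftmost, and in fact $j$ joins or merges with a block on its left, so $\alpha(|\sigma|,j)\le h_j+1$ and one block strictly to the left of $j$'s block is guaranteed, forcing $\beta(|\sigma|,j)\le h_j-1$; also $h_j\ge 1$ since a double descent or peak requires a larger left neighbour, which requires at least two blocks, i.e.\ $\alpha(|\sigma|,j-1)\ge 2$.

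The main obstacle I anticipate is the careful bookkeeping in (ii) and (iii): one must be scrupulous about the boundary conventions $\sigma_0=0$ and $\sigma_{n+1}=(-1)^n(n+1)$, because the element $0$ always sits in the leftmost block and the ``virtual'' element $n+1$ affects the neighbour pattern of the largest genuine entry. Getting the exact constants right in the inequalities $\beta\le h_j$ versus $\beta\le h_j-1$ hinges on correctly identifying, in each of the four cases, whether the block containing $j$ can be the rightmost block among blocks restricted to $\{0,\dots,j\}$ — for $\U$ and $\L$ it can, for $\W$ and $\D$ the presence of a smaller left neighbour and the element $0$ on the far left rule this out. I would organize the argument as a short case analysis keyed to the four step types, invoking (i) to translate each statement about $\alpha$ into a statement about $h_j$, and treating the edge cases $j$ minimal (handled by $\sigma_0=0$) and $j=n$ (handled by the sign convention on $\sigma_{n+1}$) explicitly.
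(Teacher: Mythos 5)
Your overall strategy coincides with the paper's: part (i) by induction, matching the four step types with block creation ($\U$), extension ($\L$, $\W$) and merging ($\D$) so that the height increments track $\alpha(|\sigma|,k)$, and parts (ii)--(iii) by locating the block containing $j$ among the $\alpha(|\sigma|,j)$ blocks. Part (i) and the bounds for $\U$ and $\L$ in (iii) are correct. However, the case analysis behind (ii) contains genuine errors, not merely slips of wording. A double descent ($\W$) satisfies $|\sigma|_{i-1}>j>|\sigma|_{i+1}$: its \emph{left} neighbour is larger, and $j$ is prepended to the block containing $|\sigma|_{i+1}$, which lies to its \emph{right} --- the opposite of what you assert (``smaller left neighbour'', ``joins \dots a block on its left''). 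A peak ($\D$) satisfies $|\sigma|_{i-1}<j>|\sigma|_{i+1}$, so your justification of $h_j\ge 1$ via ``a double descent or peak requires a larger left neighbour'' is false for peaks; and your claim that for $\W$ or $\D$ ``one block strictly to the left of $j$'s block is guaranteed'' fails for $\D$ whenever the merge absorbs the block containing $0$ (e.g.\ $|\sigma|=132$: the peak $3$ merges $(0,1)$ with $(2)$ into a single block containing $0$, yet $\beta(|\sigma|,3)=0=h_3-1$ still holds).

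The conclusions are nevertheless true, and the repair is exactly what the paper does: for both $\W$ and $\D$ the relevant fact is that the \emph{right} neighbour $|\sigma|_{i+1}$ is smaller than $j$, and its block in the restriction to $\{0,\dots,j-1\}$ cannot be the block containing $0$ (that block would have to extend past position $i$, where the value is $j$). This gives $\alpha(|\sigma|,j-1)\ge 2$, hence $h_j\ge 1$, and it shows that at most $\alpha(|\sigma|,j-1)-2=h_j-1$ blocks lie to the right of the block that $j$ enters ($\W$) or creates by merging ($\D$, where one may alternatively just use $\alpha(|\sigma|,j)=\alpha(|\sigma|,j-1)-1=h_j$ and $\beta\le\alpha(|\sigma|,j)-1$). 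Also, for $\L$ the element $j$ is appended to the block containing its smaller left neighbour $|\sigma|_{i-1}$, i.e.\ a block on its left, not its right; there the plain bound $\beta(|\sigma|,j)\le\alpha(|\sigma|,j)-1=h_j$ suffices with no boundary consideration. With these corrections your argument becomes the paper's proof.
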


\begin{proof} For the initial condition, we have $\alpha(|\sigma|,0)=1$ and $h_1=0$. The first step is either $\U$ or $\L$ since the element 1 is either a valley or a double ascent. For $j\ge 1$, let $j=|\sigma|_i$ for some $i\in [n]$. By induction, we determine the height of $z_{j+1}$ according to the following cases:
\begin{itemize}
\item If $|\sigma|_{i-1}>j<|\sigma|_{i+1}$ then $z_j=\U$ and the element $j$ itself creates a block of $|\sigma|$ restricted to $\{0,1,\dots,j\}$. Hence $h_{j+1}=h_{j}+1=\alpha(|\sigma|,j-1)=\alpha(|\sigma|,j)-1$.
\item If $|\sigma|_{i-1}<j<|\sigma|_{i+1}$ or $|\sigma|_{i-1}>j>|\sigma|_{i+1}$ then $z_j=\L$  ($\W$, respectively) and the element $j$ is added to the block with $|\sigma|_{i-1}$ ($|\sigma|_{i+1}$, respectively). Hence $h_{j+1}=h_{j}=\alpha(|\sigma|,j-1)-1=\alpha(|\sigma|,j)-1$.
\item If $|\sigma|_{i-1}<j>|\sigma|_{i+1}$ then $z_j=\D$ and the element $j$ connects the adjacent two blocks. Hence $h_{j+1}=h_{j}-1=\alpha(|\sigma|,j-1)-2=\alpha(|\sigma|,j)-1$.
\end{itemize}
The assertion (i) follows.

(ii) If $z_j=\W$ or $\D$ then $j>|\sigma|_{i+1}$. The element $j$ is added to the block with $|\sigma|_{i+1}$, which is different from the block containing $(0)$. Then $\alpha(|\sigma|,j-1)\ge 2$ and hence $h_j\ge 1$. Moreover, there are at most $\alpha(|\sigma|,j-1)-2$ blocks on the right-hand side of the block containing $|\sigma|_{i+1}$. Hence $\beta(|\sigma|,j)\le h_j-1$.

(iii) If $z_j=\U$ or $\L$ then $j<|\sigma|_{i+1}$ and there are at most $\alpha(|\sigma|,j-1)-1$ blocks on the right-hand side of the block containing $j$. Hence $\beta(|\sigma|,j)\le h_j$.
\end{proof}

Comparing the weight function of the paths in $\T^{*}_n$ in Proposition \ref{pro:T*n-weighting} and the properties of $\Lambda_1(\sigma)$ in  Lemma \ref{lem:heights}, it follows that the path $\Lambda_1(\sigma)$ constructed by Algorithm C is a member of $\T^{*}_n$.

\smallskip
Next, we shall construct the map $\Lambda_1^{-1}:\T^{*}_n\rightarrow\SS^{0}_n$ by the following procedure.

\smallskip
\noindent
{\bf Algorithm D.}

Given a path $\mu=z_1z_2\cdots z_n\in\T^{*}_n$, we associate $\mu$ with a snake $\sigma'=\Lambda_1^{-1}(\mu)$.
For $1\le j\le n$, let $\cs(|\sigma'|,j)$ ($d_j$, respectively) be the power of $t$ ($q$, respectively) of $\rho(z_j)$, and let $h_j$ be the height of  $z_j$. To find $|\sigma'|$,
 we construct a sequence $\omega_0, \omega_1, \dots, \omega_n=|\sigma'|$ of words, where $\omega_j$ is the subword consisting of the blocks of $|\sigma'|$ restricted to $\{0,1,\dots,j\}$. The initial word $\omega_0$ is a singleton $(\sigma_0)$. For $j\ge 1$, the
word $\omega_j$ is constructed from $\omega_{j-1}$ and $\rho(z_j)$ according to the following cases:
\begin{enumerate}
\item $z_j=\U$. There are two cases. If $\cs(|\sigma'|,j)=0$,  let $\ell=d_j$. Otherwise $\cs(|\sigma'|,j)=2$ and let $\ell=d_j-2h_j-1$. Then the word $\omega_j$ is obtained from $\omega_{j-1}$ by inserting $j$ between the $\ell$th and the $(\ell+1)$st block from right as a new block.
\item $z_j=\L$ or $\W$. Then let $\ell=d_j-h_j$. The word $\omega_j$ is obtained from $\omega_{j-1}$ by appending $j$ to the right end (left end, respectively) of the $(\ell+1)$st block from right as a new member of the block if $z_j=\L$ ($\W$, respectively).
\item $z_j=\D$. Then let $\ell=d_j$. The word $\omega_j$ is obtained from $\omega_{j-1}$ by inserting $j$ between the $(\ell+1)$st and the $(\ell+2)$nd block from right and getting the two blocks combined.
\end{enumerate}
Then following the rules (R1)-(R3), we determine the signs of the elements of $\omega_n$ by the sequence $\cs(|\sigma'|,j)$ for $j=1,2,\dots, n$. Hence the requested snake $\sigma'$ is established.

\smallskip

In the following we give an interpretation of the sequences $\alpha$, $\beta$ in terms of three-term patterns of the permutation $|\sigma|$.

\begin{defi} {\rm
Let $\pi=\pi_1\pi_2\cdots\pi_n\in\mathfrak{S}^{0}_n$ or $\mathfrak{S}^{00}_n$. For $1\le i\le n$, we define
\begin{align*}
\acb(\pi,i) &= \#\{j: 0\le j<i-1 \mbox{ and }\pi_j<\pi_i<\pi_{j+1}\}, \\
\bca(\pi,i) &= \#\{j: i< j\le n \mbox{ and }\pi_j>\pi_i>\pi_{j+1}\}.
\end{align*}
Let also $\bca(\pi)=\sum_{i=1}^{n} \bca(\pi,i)$. For any snake $\sigma=\sigma_1\sigma_2\cdots\sigma_n\in\SS^{0}_n$ or $\SS^{00}_n$, we distinguish the following classes $X$, $Y$ and $Z$ of elements of $\sigma$:  (i) the valleys with sign changes,   (ii) the double ascents or double descents and (iii) the peaks, namely
\begin{align*}
X(\sigma) &=\{|\sigma|_i : |\sigma|_{i-1}>|\sigma|_i<|\sigma|_{i+1},  \sigma_{i-1}\sigma_i<0 \mbox{ and } \sigma_i\sigma_{i+1}<0\}, \\
Y(\sigma) &=\{|\sigma|_i : |\sigma|_{i-1}<|\sigma|_i<|\sigma|_{i+1} \mbox{ or } |\sigma|_{i-1}>|\sigma|_i>|\sigma|_{i+1}\}, \\
Z(\sigma) &=\{|\sigma|_i : |\sigma|_{i-1}<|\sigma|_i>|\sigma|_{i+1}\}.
\end{align*}
}
\end{defi}

The parameters $\alpha(|\sigma|,k)$ and $\beta(|\sigma|,k)$ of the snake $\sigma\in\SS^{0}_n$ or $\SS^{00}_{n}$ have the following properties.

\begin{lem} \label{lem:pattern} For $0\le k\le n$, we have
\begin{enumerate}
\item $\beta(|\sigma|,k)=\bca(|\sigma|,k)$,
\item $\alpha(|\sigma|,k)=\acb(|\sigma|,k)+\bca(|\sigma|,k)+1$.
\end{enumerate}
\end{lem}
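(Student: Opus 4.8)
The plan is to prove the two identities simultaneously by induction on $k$, tracking how the blocks of $|\sigma|$ restricted to $\{0,1,\dots,k\}$ evolve as $k$ increases by one, and comparing this evolution with the change in the pattern counts $\acb(|\sigma|,k)$ and $\bca(|\sigma|,k)$. First I would unwind the definitions: $\alpha(|\sigma|,k)$ is the number of maximal runs of entries in $\{0,1,\dots,k\}$ in the word $|\sigma|$ (with the convention on $|\sigma|_0,|\sigma|_{n+1}$), and $\beta(|\sigma|,k)$ counts those runs lying strictly to the right of the run containing $k$. I would observe that inserting the element $k$ into the configuration of blocks built from $\{0,1,\dots,k-1\}$ falls into exactly the four position-types already catalogued in Algorithm C and Lemma \ref{lem:heights}: $k$ is a valley (it starts a brand-new block), a double ascent or double descent (it is appended to the left or right end of an existing block), or a peak (it merges two adjacent blocks into one).

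For the base case $k=0$: if $\sigma\in\SS^{0}_n$ then there is a single block $(\sigma_0)$, so $\alpha(|\sigma|,0)=1$, and $\beta(|\sigma|,0)=0$ since that block is rightmost; meanwhile $\acb(|\sigma|,0)=\bca(|\sigma|,0)=0$ by vacuity, so both formulas hold. (The $\SS^{00}_n$ case with $\alpha(|\sigma|,0)=2$ is handled by the same bookkeeping with the extra boundary $0$ on the right, and I would note that the pattern counts $\acb(\cdot,0)$, $\bca(\cdot,0)$ pick up the corresponding boundary contribution.) For the inductive step, the crucial point is the following: when we pass from $k-1$ to $k$, the quantity $\bca(|\sigma|,k)$ counts exactly the positions $j>i$ (where $k=|\sigma|_i$) with $|\sigma|_j>k>|\sigma|_{j+1}$ — that is, the descents of $|\sigma|$ that "jump over" $k$ and lie to the right of $k$. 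Each such descent corresponds precisely to a gap between two consecutive blocks (restricted to $\{0,\dots,k-1\}$, equivalently $\{0,\dots,k\}$) situated to the right of where $k$ sits, because a block boundary to the right of $k$'s run is exactly a place where the word leaves $\{0,\dots,k\}$ going up and later returns going down past a value exceeding $k$. This gives part (i): $\beta(|\sigma|,k)=\bca(|\sigma|,k)$. For part (ii), $\alpha(|\sigma|,k)$ equals $1$ plus the total number of block boundaries (left gaps and right gaps together); the right gaps are counted by $\bca(|\sigma|,k)=\beta(|\sigma|,k)$, and by the symmetric argument the left gaps — descents of $|\sigma|$ jumping over $k$ to the left, i.e. ascents $|\sigma|_j<k<|\sigma|_{j+1}$ with $j<i-1$ — are counted by $\acb(|\sigma|,k)$. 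Summing gives $\alpha(|\sigma|,k)=\acb(|\sigma|,k)+\bca(|\sigma|,k)+1$.

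The main obstacle I anticipate is making the correspondence "block boundary to the right of $k$'s run $\leftrightarrow$ descent of $|\sigma|$ over $k$ on the right" completely rigorous, in particular getting the boundary conventions at positions $0$ and $n+1$ exactly right so that the off-by-one constants in (ii) and the difference between the $\SS^{0}_n$ and $\SS^{00}_n$ conventions come out correctly. I would address this by arguing directly on the structure of $|\sigma|$ rather than inductively for this step: a block of $|\sigma|$ restricted to $\{0,\dots,k\}$ is a maximal factor $|\sigma|_a\cdots|\sigma|_b$ with all entries $\le k$, so its left boundary (if $a\ge 1$) satisfies $|\sigma|_{a-1}>k\ge|\sigma|_a$ and its right boundary (if $b\le n$) satisfies $|\sigma|_b\le k<|\sigma|_{b+1}$; pairing consecutive blocks shows each internal boundary is simultaneously the right end of one block and the left end of the next, and is witnessed by a descent-over-$k$ or an ascent-over-$k$ of $|\sigma|$ depending on which side of $k$'s own block it lies. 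Once this dictionary is stated cleanly, both identities follow by counting, and the consistency with Lemma \ref{lem:heights}(i), namely $h_{k}=\alpha(|\sigma|,k-1)-1=\acb(|\sigma|,k-1)+\bca(|\sigma|,k-1)$, provides a useful sanity check on the constants.
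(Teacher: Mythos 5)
Your proposal is correct and rests on the same key observation as the paper's proof: each block of $|\sigma|$ restricted to $\{0,1,\dots,k\}$ lying to the right (respectively left) of the block containing $k$ contributes exactly one $\bca$-pattern (respectively $\acb$-pattern) via the two adjacent entries of $|\sigma|$ straddling its boundary, which immediately gives $\beta(|\sigma|,k)=\bca(|\sigma|,k)$ and $\alpha(|\sigma|,k)=\acb(|\sigma|,k)+\bca(|\sigma|,k)+1$. The inductive scaffolding you set up is ultimately discarded in favor of this direct structural count, which is precisely the paper's (two-sentence) argument.
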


\begin{proof} Suppose there are $\ell$ ($\ell'$, respectively) blocks on the right-hand (left-hand, respectively) side of the block containing $k$ when $|\sigma|$ is restricted to $\{0,1,\dots,k\}$. Then along with the element $k$, the two adjacent entries of $|\sigma|$ at the left (right, respectively) boundary of each block constitute a $\bca$-pattern ($\acb$-pattern, respectively). Hence $\bca(|\sigma|,k)=\ell$ and $\acb(|\sigma|,k)=\ell'$. The assertions follow.
\end{proof}

For example, let $\pi=((0),5,2,4,7,1,8,10,9,6,3,(11))\in\mathfrak{S}^{0}_{10}$. As shown in Example \ref{exa:alpha-beta-0}, $\alpha(\pi,6)=3$ and $\beta(\pi,6)=0$.
We have $\acb(\pi,6)=2$, where the two requested $\acb$-patterns are $(4,7,6)$ and $(1,8,6)$.

Following the weighting scheme given in Algorithm C,
we define the statistic $\pat_Q$ of a snake $\sigma\in\SS^{0}_n$ by
\begin{align} \label{eqn:patQ}
\begin{split}
\pat_Q(\sigma) &=\sum_{j\in X(\sigma)} 2\big(\acb(|\sigma|,j)+\bca(|\sigma|,j)\big)-\#X(\sigma) \\
               &\qquad\qquad +\sum_{j\in Y(\sigma)} \big( \acb(|\sigma|,j)+\bca(|\sigma|,j)\big).
\end{split}
\end{align}
By Lemmas \ref{lem:heights} and \ref{lem:pattern} and Proposition \ref{pro:T*n-weighting},
we have the following result.

\smallskip
\begin{thm} \label{thm:T*->snake-0} The map $\Lambda_1$ established by Algorithm C is a bijection between $\SS^{0}_n$ and $\T^{*}_n$ such that
\[
\sum_{\sigma\in \SS_n^{0}} t^{\cs(\sigma)}q^{\bca(|\sigma|)+\pat_Q(\sigma)}=Q_n(t,q).
\]
\end{thm}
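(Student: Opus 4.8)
The plan is to show that $\Lambda_1$ is a well-defined bijection by exhibiting $\Lambda_1^{-1}$ (Algorithm D) as a genuine two-sided inverse, and then to verify that the weight of the path matches the claimed monomial in $\sigma$. First I would check that $\Lambda_1(\sigma)\in\T^{*}_n$ for every $\sigma\in\SS^0_n$: Lemma~\ref{lem:heights} already tells us the heights $h_j$ agree with $\alpha(|\sigma|,j-1)-1$ and gives the constraints $0\le\beta(|\sigma|,j)\le h_j$ (for $\U,\L$) or $0\le\beta(|\sigma|,j)\le h_j-1$ (for $\W,\D$), together with $h_j\ge 1$ in the latter case. Comparing with the weight scheme of Proposition~\ref{pro:T*n-weighting}, a $\U$-step at height $h$ should carry weight in $\{1,\dots,q^h\}\cup\{t^2q^{2h+1},\dots,t^2q^{3h+1}\}$; since $\rho(z_j)=q^{\beta}$ with $\beta\le h_j$ in the sign-preserving case and $\rho(z_j)=t^2q^{\beta+2\alpha-3}=t^2q^{\beta+2h_j-1}$ with $0\le\beta\le h_j$ in the sign-changing case (so the exponent ranges over $2h_j-1,\dots,3h_j-1$), this matches exactly; the $\L$, $\W$, $\D$ cases are checked the same way against $\{tq^{h_j},\dots,tq^{2h_j}\}$, $\{tq^{h_j},\dots,tq^{2h_j-1}\}$, $\{1,\dots,q^{h_j}\}$ respectively, using $\alpha=h_j+1$. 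Also the path ends at height $0$ because $n$ is always a peak (a $\D$-step) since $|\sigma|_{n+1}=n+1>n$; one should record why the last step lands the path back on the $x$-axis, i.e. that $\alpha(|\sigma|,n)=1$, which holds because all of $\{0,1,\dots,n\}$ forms a single block once every element is present.

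Next I would verify that Algorithm D inverts Algorithm C. The key invariant is that after step $j$, the word $\omega_j$ is precisely the sequence of blocks of $|\sigma|$ restricted to $\{0,1,\dots,j\}$, read left to right, and that the number of blocks equals $\alpha(|\sigma|,j)=h_{j+1}+1$. I would prove this by induction on $j$, using Lemma~\ref{lem:heights}(i) to know the height of the next step in advance, and checking case by case that the insertion rule in Algorithm D (insert a new singleton block $\ell$ slots from the right when $z_j=\U$, append to an existing block when $z_j=\L$ or $\W$, merge two adjacent blocks when $z_j=\D$) is the unique way to build $\omega_j$ from $\omega_{j-1}$ consistent with $\beta(|\sigma|,j)$ being recoverable from the recorded $q$-exponent $d_j$ and the height $h_j$: in the $\U$ case $\beta=d_j$ when $\cs=0$ and $\beta=d_j-2h_j-1$ when $\cs=2$; in the $\L,\W$ case $\beta=d_j-h_j$; in the $\D$ case $\beta=d_j$. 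These are exactly the relations that invert the weight assignments in Algorithm~C after substituting $\alpha(|\sigma|,j)=h_j+1$. Once $|\sigma'|$ and the $\cs$-vector are recovered, Lemma~\ref{lem:sign-changes} guarantees the signs of $\sigma'$ are uniquely determined, so $\Lambda_1^{-1}\circ\Lambda_1=\mathrm{id}$; the reverse composition follows because Algorithm~D reads off a valid $\cs$-vector and a permutation whose three-term patterns at each stage are forced by the heights, so $\Lambda_1(\Lambda_1^{-1}(\mu))=\mu$.

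Finally I would assemble the generating-function identity. By construction the power of $t$ in $\rho(z_j)$ equals $\cs(\sigma,j)$, so $\sum_j(\text{power of }t)=\cs(\sigma)$ by Lemma~\ref{lem:sign-changes}. For the power of $q$, collect the $q$-exponents over all steps: a valley without sign change contributes $\beta(|\sigma|,j)$, a valley with sign change contributes $\beta(|\sigma|,j)+2\alpha(|\sigma|,j)-3$, a double ascent or double descent contributes $\beta(|\sigma|,j)+\alpha(|\sigma|,j)-1$, and a peak contributes $\beta(|\sigma|,j)$. Using Lemma~\ref{lem:pattern}, $\beta(|\sigma|,j)=\bca(|\sigma|,j)$ and $\alpha(|\sigma|,j)-1=\acb(|\sigma|,j)+\bca(|\sigma|,j)$. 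Summing the $\beta$-contributions over \emph{all} $j$ gives $\sum_j\bca(|\sigma|,j)=\bca(|\sigma|)$, and the remaining extra terms — namely $2\alpha-2$ for each $j\in X(\sigma)$ minus a correction, and $\alpha-1$ for each $j\in Y(\sigma)$ — reassemble exactly into $\pat_Q(\sigma)$ as defined in~(\ref{eqn:patQ}); I would spell out the bookkeeping that the $-3$ in the sign-changing valley weight splits as $-2$ (absorbed into writing $2(\alpha-1)=2\acb+2\bca$) plus $-1$ (the $-\#X(\sigma)$ term). Hence $q^{\bca(|\sigma|)+\pat_Q(\sigma)}$ is the total $q$-weight, and since $\Lambda_1$ is a bijection onto $\T^{*}_n$, summing over $\sigma\in\SS^0_n$ gives $\rho(\T^*_n)$, which equals $Q_n(t,q)$ by Proposition~\ref{pro:T*n-weighting}. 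The main obstacle is the inductive verification in the second paragraph that Algorithm~D is well-defined and genuinely inverse to Algorithm~C — in particular that at every stage the block structure, the recorded $q$-exponent, and the known height determine each other without ambiguity; the generating-function step is then a routine, if slightly fiddly, exponent count.
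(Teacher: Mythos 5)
Your proposal follows essentially the same route as the paper: verify via Lemma \ref{lem:heights} that Algorithm C lands in $\T^{*}_n$, check that Algorithm D is a two-sided inverse by tracking the block decomposition, and translate the $q$-exponents through Lemma \ref{lem:pattern} into $\bca(|\sigma|)+\pat_Q(\sigma)$; the paper's proof is exactly this (stated far more tersely), and your final exponent bookkeeping, including splitting the $-3$ into $-2$ plus the $-\#X(\sigma)$ term, is correct. One slip to repair: for a $\U$-step the valley $j$ opens a new block, so $\alpha(|\sigma|,j)=\alpha(|\sigma|,j-1)+1=h_j+2$, not $h_j+1$; the substitution $\alpha=h_j+1$ is valid only for $\L$ and $\W$ steps. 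With the correct value the sign-changing valley weight is $t^2q^{\beta+2h_j+1}$ with exponent ranging over $2h_j+1,\dots,3h_j+1$, which is what Proposition \ref{pro:T*n-weighting} requires; your claimed range $2h_j-1,\dots,3h_j-1$ does not match it, and is inconsistent with the (correct) inversion formula $\beta=d_j-2h_j-1$ that you use in your second paragraph. Finally, the parenthetical claim that $n$ is always a peak is false --- if $n$ occupies the last position it is a double ascent, since $|\sigma|_{n+1}=n+1$ --- but the justification you also give, namely $\alpha(|\sigma|,n)=1$, is the right one for the path returning to height $0$.
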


\section{The enumerator $R_n(t,q)$ of $\SS^{00}_{n+1}$.} 
In this section we shall apply a similar procedure to $R_n(t,q)$ and $\SS^{00}_{n+1}$. We establish a map $\Lambda_2:\SS^{00}_{n+1}\rightarrow\T_n$ by the same method as in Algorithm C with a modification on the weighting scheme.

Given a snake $\sigma=\sigma_1\sigma_2\cdots\sigma_{n+1}\in\SS^{00}_{n+1}$, recall that the $\cs$-vector of $\sigma$ and the parameters $\alpha(|\sigma|,k)$, $\beta(|\sigma|,k)$ for $k=1,2,\dots, n$ are computed under the convention $\sigma_0=\sigma_{n+2}=0$.

\begin{exa} \label{exa:alpha-beta-00} {\rm
Let $\sigma=((0),5,-2,4,-7,-1,-8,11,-9,6,3,10,(0))\in\SS^{00}_{11}$. Notice that $\alpha(|\sigma|,6)=4$ and $\beta(|\sigma|,6)=1$ as shown below.
\[
\begin{tabular}{ccccccccccccc}
    (0) & 5 & 2 & 4 & 7 & 1 & 8 & 11 & 9 & 6 & 3 & 10 & (0) \\
    \cline{1-4} \cline{6-6}  \cline{10-11} \cline{13-13} \\
\end{tabular}
\]
For $0\le k\le 10$, the sequences $\alpha(|\sigma|,k)$ and $\beta(|\sigma|,k)$ of $|\sigma|$ are shown in Table \ref{tab:block-vector-00}.
}
\end{exa}

\begin{table}[ht]
\caption{The $\alpha$ and $\beta$ vectors of $|\sigma|$.}
\begin{center}
\begin{tabular}{c|cccccccccccc}
    $k$           & 0 & 1 & 2 & 3 & 4 & 5 & 6 & 7 & 8 & 9 & 10 & 11 \\
    \hline
  $\alpha(|\sigma|,k)$ & 2 & 3 & 4 & 5 & 5 & 4 & 4 & 3 & 3 & 3 & 2  &    \\
   $\beta(|\sigma|,k)$ &   & 1 & 2 & 1 & 3 & 3 & 1 & 2 & 2 & 1 & 0  &
\end{tabular}
\end{center}
\label{tab:block-vector-00}
\end{table}

We associate the snake $\sigma$ with a weighted path $\Lambda_2(\sigma)=z_1z_2\cdots z_n$ by the following procedure.

\smallskip
\noindent
{\bf Algorithm C'.}

For $1\le j\le n$, let $j=|\sigma|_i$ for some $i\in [n+1]$ and define the step $z_j$ according to the following cases:
\begin{enumerate}
\item if $|\sigma|_{i-1}>j<|\sigma|_{i+1}$ then $z_j=\U$ with weight
\[
\rho(z_j)=\left\{ \begin{array}{ll}
                   q^{\beta(|\sigma|,j)-1} &\mbox{ if $\sigma_{i-1}\sigma_i>0$ and $\sigma_i\sigma_{i+1}>0$,} \\
                   t^2q^{\beta(|\sigma|,j)+2\alpha(|\sigma|,j)-5} &\mbox{ if $\sigma_{i-1}\sigma_i<0$ and $\sigma_i\sigma_{i+1}<0$,}
                  \end{array}
          \right.
\]
\item if $|\sigma|_{i-1}<j<|\sigma|_{i+1}$ then $z_j=\L$ with weight $tq^{\beta(|\sigma|,j)+\alpha({|\sigma|,j})-2}$,
\item if $|\sigma|_{i-1}>j>|\sigma|_{i+1}$ then $z_j=\W$ with weight $tq^{\beta(|\sigma|,j)+\alpha({|\sigma|,j})-2}$,
\item if $|\sigma|_{i-1}<j>|\sigma|_{i+1}$ then $z_j=\D$ with weight $q^{\beta(|\sigma|,j)}$.
\end{enumerate}

\smallskip
For example, take the snake $\sigma=((0),5,-2,4,-7,-1,-8,11,-9,6,3,10,(0))\in\SS^{00}_{11}$. From the parameters $\alpha(|\sigma|,k)$, $\beta(|\sigma|,k)$ of $\sigma$ given in Example \ref{exa:alpha-beta-00}, the corresponding path $\Lambda_2(\sigma)$ is shown in Figure \ref{fig:snake-path-00}.

\begin{figure}[ht]
\begin{center}
\psfrag{1}[][][0.95]{$1$}
\psfrag{t2q5}[][][0.95]{$t^2q^5$}
\psfrag{tq6}[][][0.95]{$tq^6$}
\psfrag{q3}[][][0.95]{$q^3$}
\psfrag{tq3}[][][0.95]{$tq^3$}
\psfrag{q2}[][][0.95]{$q^2$}
\psfrag{tq2}[][][0.95]{$tq^2$}
\includegraphics[width=3.0in]{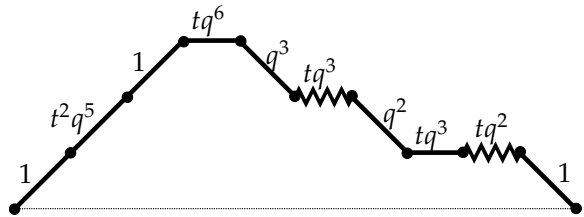}
\end{center}
\caption{\small The corresponding path of the snake $\sigma\in\SS^{00}_{11}$.}
\label{fig:snake-path-00}
\end{figure}

\begin{lem} \label{lem:heights-00} For a snake $\sigma=\sigma_1\sigma_2\cdots\sigma_{n+1}\in\SS^{00}_{n+1}$, let $\Lambda_2(\sigma)=z_1z_2\cdots z_n$ be the path constructed by Algorithm C'. For $1\le j\le n$, let $h_j$ be the height of the step $z_j$. Then the following properties hold.
\begin{enumerate}
\item $h_j=\alpha(|\sigma|,j-1)-2$.
\item If $z_j=\D$ then $h_j\ge 1$.
\item If $z_j=\W$ or $\D$ then $0\le \beta(|\sigma|,j)\le h_j$.
\item If $z_j=\U$ or $\L$ then $1\le \beta(|\sigma|,j)\le h_j+1$.
\end{enumerate}
\end{lem}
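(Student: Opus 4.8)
The plan is to follow the inductive strategy used for Lemma~\ref{lem:heights}, now adapted to the $\SS^{00}$ convention $\sigma_0=\sigma_{n+2}=0$. Writing $\Lambda_2(\sigma)=z_1z_2\cdots z_n$ as produced by Algorithm~C', I would track, as $j$ runs from $0$ to $n$, the word $\omega_j$ consisting of the blocks of $|\sigma|$ restricted to $\{0,1,\dots,j\}$. Each step $z_j$ encodes precisely how inserting the element $j$ into $\omega_{j-1}$ alters the block count $\alpha(|\sigma|,j)$, and the height $h_j$ is tied to this count through the relation $h_j=\alpha(|\sigma|,j-1)-2$.

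For part~(i) I would induct on $j$. The base case is $h_1=0=\alpha(|\sigma|,0)-2$, where $\alpha(|\sigma|,0)=2$ because of the two sentinel zeros at the ends of $\sigma$; this shift by $2$ (in place of the shift by $1$ in Lemma~\ref{lem:heights}) is the only structural change. For the inductive step, with $j=|\sigma|_i$, the four local patterns of $|\sigma|$ around position $i$ behave exactly as in Lemma~\ref{lem:heights}: a valley ($z_j=\U$) makes $j$ a new singleton block, so $\alpha$ and the height each rise by $1$; a double ascent or a double descent ($z_j=\L$ or $\W$) appends $j$ to an existing block, so $\alpha$ and the height are unchanged; a peak ($z_j=\D$) has both $|\sigma|_{i-1}$ and $|\sigma|_{i+1}$ already lying in two distinct blocks (kept apart by the not-yet-placed $j$), and inserting $j$ merges them, so $\alpha$ and the height each drop by $1$. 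In every case $h_{j+1}=\alpha(|\sigma|,j)-2$, which closes the induction.

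Parts~(ii)--(iv) are then read off from the block structure, and the one genuinely new ingredient is the inequality $\alpha(|\sigma|,k)\ge 2$ for all $0\le k\le n$: the block containing the left sentinel $\sigma_0$ and the block containing the right sentinel $\sigma_{n+2}$ stay distinct until the final element $n+1$ is inserted, which never happens while $k\le n$. This at once gives~(ii): for $z_j=\D$ one has $\alpha(|\sigma|,j)=\alpha(|\sigma|,j-1)-1$ together with $\alpha(|\sigma|,j)\ge 2$, so $\alpha(|\sigma|,j-1)\ge 3$ and $h_j\ge 1$. For~(iii) and~(iv) I would use that $\beta(|\sigma|,j)$ counts the blocks of $\omega_j$ strictly to the right of the block containing $j$, together with the fact that $\alpha(|\sigma|,j)$ equals $\beta(|\sigma|,j)+1$ plus the number of blocks of $\omega_j$ strictly to the left of that block. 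When $z_j=\W$ or $\D$ we have $j>|\sigma|_{i+1}$, so $j$ enters (or merges into) the block containing $|\sigma|_{i+1}$ from the right; checking that the left sentinel block then lies to the left of $j$'s block (automatic when $z_j=\D$, and forced by the large entry $|\sigma|_{i-1}$ at position $i-1$ when $z_j=\W$) yields $\beta(|\sigma|,j)\le\alpha(|\sigma|,j-1)-2=h_j$. When $z_j=\U$ or $\L$ we have $j<|\sigma|_{i+1}$, so the left sentinel block lies strictly to the left of $j$'s block and the right sentinel block strictly to the right of it, which gives $1\le\beta(|\sigma|,j)\le\alpha(|\sigma|,j-1)-1=h_j+1$.

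I expect the delicate part to be the bookkeeping with the two sentinel blocks in the boundary situations --- when $i\in\{1,n+1\}$, or when $j$'s block already abuts a sentinel block --- where it is easy to be off by one in the block counts; this is precisely where the $\SS^{00}$ convention differs from the $\SS^{0}$ convention of Lemma~\ref{lem:heights}. A related point is that $\alpha(|\sigma|,k)\ge 2$ should be established directly from $\sigma_0=\sigma_{n+2}=0$, rather than deduced from non-negativity of the path, so that the verification that $\Lambda_2(\sigma)$ is a genuine Motzkin path --- and hence part~(ii) --- is not argued circularly.
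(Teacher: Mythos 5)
Your proposal is correct and follows essentially the same route as the paper's proof: induction on $j$ for the height formula (reusing the case analysis of Lemma~\ref{lem:heights} with the shifted base case $\alpha(|\sigma|,0)=2$), the observation that the absent element $n+1$ keeps the two sentinel blocks apart so that $\alpha(|\sigma|,k)\ge 2$, and block-position counts for parts (iii)--(iv). One small quibble: for a straight level step $\L$ the element $j$ may itself be appended to the left sentinel block, so your claim that this block lies strictly to the left of $j$'s block can fail in that case --- but the required bound $\beta(|\sigma|,j)\le h_j+1$ still follows from the trivial count $\beta(|\sigma|,j)\le\alpha(|\sigma|,j)-1=\alpha(|\sigma|,j-1)-1$, so nothing breaks.
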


\begin{proof}
By the initial condition $\alpha(|\sigma|,0)=2$, we have $h_1=0$. The first step is $\U$, $\L$ or $\W$. Note that $z_1=\L$ ($\W$, respectively) if $|\sigma|_1=1$ ($|\sigma|_{n+1}=1$, respectively) and $z_1=\U$ if $|\sigma|_i=1$ for some $i\in\{2,\dots,n\}$. For $j\ge 1$, let $j=|\sigma|_i$ for some $i\in [n+1]$.
The assertion (i) can be proved by the same argument as in the proof of (i) of Lemma \ref{lem:heights}.

(ii) Note that $\alpha(|\sigma|,j)\ge 2$ since the greatest element $n+1$ is always absent. If $z_j=\D$ then $\alpha(|\sigma|,j-1)\ge 3$ and hence $h_j\ge 1$.

(iii) If $z_j=\W$ or $\D$ then $j>|\sigma|_{i+1}$ and the element $j$ is added to the block with $|\sigma|_{i+1}$. Then there are at most $\alpha(|\sigma|,j-1)-2$ blocks on the right-hand side of the block containing $|\sigma|_{i+1}$. Hence $0\le\beta(|\sigma|,j)\le h_j$.

(iv) If $z_j=\U$ or $\L$ then $j<|\sigma|_{i+1}$ and there are at least one block and at most $\alpha(|\sigma|,j-1)-1$ blocks on the right-hand side of the element $j$. Hence $1\le\beta(|\sigma|,j)\le h_j+1$.
\end{proof}

\smallskip
Comparing the weight function of the paths in $\T_n$ in Proposition \ref{pro:Tn-weighting} and the properties of $\Lambda_2(\sigma)$ in  Lemma \ref{lem:heights-00}, it follows that the path $\Lambda_2(\sigma)$ constructed by Algorithm C' is a member of $\T_n$.

\smallskip
To find $\Lambda^{-1}_2$, given a path $\mu=z_1z_2\cdots z_n\in\T_n$, we shall construct the corresponding snake $\sigma'=\Lambda_2^{-1}(\mu)$ by the following procedure.

\smallskip
\noindent
{\bf Algorithm D'.}

For $1\le j\le n$, let $\cs(|\sigma'|,j)$ ($d_j$, respectively) be the power of $t$ ($q$, respectively) of $\rho(z_j)$ and let $h_j$ be the height of $z_j$. To find $|\sigma'|$,
 we construct a sequence $\omega_0,\omega_1,\dots,\omega_n$ of words, where $\omega_j$ is the subword consisting of the blocks of $|\sigma'|$ restricted to $\{0,1,\dots,j\}$. The last word $\omega_n$ contains exactly two blocks, and the requested permutation $|\sigma'|$ is obtained from $\omega_n$ by inserting the element $n+1$ between the two blocks.

The initial word $\omega_0$ consists of the two blocks $(\sigma_0)$ and $(\sigma_{n+1})$. For $j\ge 1$, the
word $\omega_j$ is constructed from $\omega_{j-1}$ and $\rho(z_j)$ according to the following cases:
\begin{enumerate}
\item $z_j=\U$. There are two cases. If $\cs(|\sigma'|,j)=0$,  let $\ell=d_j+1$. Otherwise $\cs(|\sigma'|,j)=2$ and let $\ell=d_j-2h_j-1$. Then the word $\omega_j$ is obtained from $\omega_{j-1}$ by inserting $j$  between the $\ell$th and the $(\ell+1)$st block from right as a new block.
\item $z_j=\L$ or $\W$. Then let $\ell=d_j-h_j$. The word $\omega_j$ is obtained from $\omega_{j-1}$ by appending $j$ to the right end (left end, respectively) of the $(\ell+1)$st block from right as a new member of the block if $z_j=\L$ ($\W$, respectively).

\item $z_j=\D$. Then let $\ell=d_j$. The word $\omega_j$ is obtained from $\omega_{j-1}$ by inserting $j$ to $\omega_{j-1}$ between the $(\ell+1)$st and the $(\ell+2)$nd block from right and getting the two blocks combined.
\end{enumerate}
Then the signs of the elements of the requested snake $\sigma'$ can be determined by $|\sigma'|$ and the sequence $\cs(|\sigma'|,j)$ for $j=1,2,\dots,n$.

\smallskip
Following the weighting scheme given in Algorithm C', we define the statistic $\pat_R$ of a snake $\sigma\in\SS^{00}_{n+1}$ by
\begin{align} \label{eqn:patR}
\begin{split}
\pat_R(\sigma) &=\sum_{j\in X(\sigma)} 2\big(\acb(|\sigma|,j)+\bca(|\sigma|,j)-1\big) \\
               &\qquad\qquad +\sum_{j\in Y(\sigma)} \big(\acb(|\sigma|,j)+\bca(|\sigma|,j)\big)+\#Z(\sigma).
\end{split}
\end{align}
Notice that $\sum_{j=1}^{n} (\beta(|\sigma|,j)-1)=\bca(|\sigma|)-n$ and that $Z(\sigma)$ contains the element $n+1$, which is not involved in step (iv) of Algorithm C'.
By Lemmas \ref{lem:heights-00} and \ref{lem:pattern} and Proposition \ref{pro:Tn-weighting},
we have the following result.

\smallskip
\begin{thm} \label{thm:T->snakes-00} The map $\Lambda_2$ established by Algorithm C' is a bijection between $\SS^{00}_{n+1}$ and $\T_n$ such that
\[
\sum_{\sigma\in \SS^{00}_{n+1}} t^{\cs(\sigma)}q^{\bca(|\sigma|)+\pat_R(\sigma)-n-1}=R_n(t,q).
\]
\end{thm}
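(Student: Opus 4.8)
The plan is to follow the same template as the proof of Theorem~\ref{thm:T*->snake-0}, establishing three things: that $\Lambda_2$ maps $\SS^{00}_{n+1}$ into $\T_n$, that Algorithm~D' is a two-sided inverse of Algorithm~C', and that $\rho(\Lambda_2(\sigma))=t^{\cs(\sigma)}q^{\bca(|\sigma|)+\pat_R(\sigma)-n-1}$ for every $\sigma\in\SS^{00}_{n+1}$. The displayed identity then follows at once: $\Lambda_2$ being a weight-compatible bijection gives $\sum_{\sigma\in\SS^{00}_{n+1}}t^{\cs(\sigma)}q^{\bca(|\sigma|)+\pat_R(\sigma)-n-1}=\rho(\T_n)$, and Proposition~\ref{pro:Tn-weighting} identifies $\rho(\T_n)$ with $R_n(t,q)$. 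That $\Lambda_2(\sigma)\in\T_n$ is essentially the comparison already noted after Lemma~\ref{lem:heights-00}: writing $j=|\sigma|_i$, one has $\alpha(|\sigma|,j)=h_j+3,\ h_j+2,\ h_j+2,\ h_j+1$ according as $j$ is a valley, double ascent, double descent or peak (Lemma~\ref{lem:heights-00}(i)), so each exponent in Algorithm~C' becomes a power of $q$ in precisely the range allowed by the weight scheme of $\T_n$, the endpoints coming from the bounds on $\beta(|\sigma|,j)$ in Lemma~\ref{lem:heights-00}(ii)--(iv).

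For bijectivity, by Lemma~\ref{lem:sign-changes} the signs of a snake in $\SS^{00}_{n+1}$ are determined by $|\sigma|$ and its $\cs$-vector, and the power of $t$ of $\rho(z_j)$ is exactly $\cs(\sigma,j)$ by construction; hence it suffices to reconstruct the word $|\sigma|$ from the sequence of step types, heights $h_j$ and $q$-exponents $d_j$ of $\Lambda_2(\sigma)$. This is what Algorithm~D' does, processing $j=1,\dots,n$ and maintaining the blocks of $|\sigma|$ restricted to $\{0,\dots,j\}$: the step type fixes the kind of insertion of $j$ (new singleton block, append to the right or left end of an existing block, or merge two adjacent blocks) and the conversions $\ell=d_j+1$, $\ell=d_j-2h_j-1$, $\ell=d_j-h_j$, $\ell=d_j$ invert the exponent formulas $\beta-1$, $\beta+2\alpha-5$, $\beta+\alpha-2$, $\beta$ of Algorithm~C' once $\alpha(|\sigma|,j)$ is replaced by its value in terms of $h_j$ above. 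Since both algorithms are strictly local and step-by-step, all that remains is to check that each index $\ell$ selects an admissible block, which is exactly what the bounds in Lemma~\ref{lem:heights-00} guarantee.

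For the weight, the $t$-exponent of $\rho(\Lambda_2(\sigma))$ is $\sum_{j=1}^{n}\cs(\sigma,j)=\cs(\sigma)$ by Lemma~\ref{lem:sign-changes}. For the $q$-exponent I would sum the exponents in Algorithm~C' over all $n$ steps, substitute $\beta(|\sigma|,j)=\bca(|\sigma|,j)$ and $\alpha(|\sigma|,j)=\acb(|\sigma|,j)+\bca(|\sigma|,j)+1$ from Lemma~\ref{lem:pattern}, and regroup over the classes $X(\sigma)$, $Y(\sigma)$, $Z(\sigma)$ and the valleys without sign change. Using $\sum_{j=1}^{n}\bca(|\sigma|,j)=\bca(|\sigma|)$ (the maximum $n+1$ contributing $0$) and the fact that the $n+1$ entries of $|\sigma|$ split into valleys, double ascents/descents and peaks, the remaining terms collapse to $\pat_R(\sigma)-(n+1)$, so the total $q$-exponent is $\bca(|\sigma|)+\pat_R(\sigma)-n-1$. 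The correction $-n-1$ here, in contrast to the clean formula in Theorem~\ref{thm:T*->snake-0}, is produced by the shift $-1$ in the valley and $\L$/$\W$ weights of Algorithm~C' together with the fact that the maximal element $n+1$ --- always a peak --- carries no step.

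I expect the real obstacle to be the verification that Algorithms~C' and~D' are mutually inverse: that every index $\ell$ generated by Algorithm~D' points to a block that actually exists and that the resulting block structure agrees, case by case, with the one read off by Algorithm~C', all while respecting the sentinel conventions $\sigma_0=\sigma_{n+2}=0$ of $\SS^{00}_{n+1}$. This is where the interplay among heights, the parameters $\alpha,\beta$ and the boundary conditions is most delicate. By comparison, the $q$-exponent bookkeeping, although somewhat tedious, is a routine consequence of Lemmas~\ref{lem:heights-00} and~\ref{lem:pattern}.
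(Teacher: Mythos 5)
Your proposal is correct and follows essentially the same route as the paper: the paper likewise deduces the theorem by comparing the weight scheme of $\Lambda_2(\sigma)$ with that of $\T_n$ via Lemma \ref{lem:heights-00}, inverting Algorithm C' by Algorithm D', and converting the $q$-exponents through Lemma \ref{lem:pattern} together with the observations that $\sum_{j=1}^{n}(\beta(|\sigma|,j)-1)=\bca(|\sigma|)-n$ and that the peak $n+1\in Z(\sigma)$ carries no step, which produces the correction $-n-1$. Your bookkeeping (including the values $\alpha(|\sigma|,j)=h_j+3,\,h_j+2,\,h_j+2,\,h_j+1$ and the inversion formulas for $\ell$) checks out and is in fact more explicit than the paper's own one-line justification.
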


\chapter{Discussions}
In this chapter we discuss some possible direction for future research.\\
\begin{enumerate}
\item In this work, we give various signed countings on type B and D permtuations and derangements. When consindering the $(t,q)$-analogs, we obtain the $(t,q)$-derivative polynomials $Q_n(t,q)$ and $R_n(t,q)$. In $Q_n(t,q)$ the power of $t$ counts the sign changing  and that of $q$ counts $\bca(|\cdot|)+\pat_Q$. In $R_n(t,q)$ the power of $t$ counts the sign changing  and that of $q$ counts $\bca(|\cdot|)+\pat_R-n-1$. However, there are additional signed counting identities in corollary \ref{cor:sign-S_n^D} in which type D Springer number $S_n^D$ appears. It is interesting to see whether after taking parameters $t$ and $q$ into consideration the signed counting identities are some enumerators of snakes of type D. In fact, we already have some observations.\\
Consider $\sum_{\sigma\in B_n-B_n^{*}}(-1)^{\lfloor\frac{\fwex(\sigma)}{2}\rfloor}t^{\nega(\sigma)}$ in the case of even $n$ and $\sum_{\sigma\in B_n-B_n^{*}}(-1)^{\lceil\frac{\fwex(\sigma)}{2}\rceil}t^{\nega(\sigma)}$ in the case of odd $n$. With the aid of Python, we have
\[
	\begin{array}{l|l}
	n=2 & -\underline{2t+1}\\
	n=3 & \underline{6t^2-3t}+2\\
	n=4 & \underline{24t^3-12t^2}+\underline{16t - 5}\\
	n=5 & -\underline{120t^4 + 60t^3}-\underline{120t^2 + 45t} - 16\\
	n=6 & -\underline{720t^5 + 360t^4} -\underline{960t^3 + 390t^2} -\underline{272t + 61}\\
	n=7 & \underline{5040t^6 - 2520t^5} + \underline{8400t^4 - 3570t^3} + \underline{3696t^2 - 1113t} + 272.
	\end{array}
\]
On the other hand, the set of snakes of type D is defined to be
\[
	\SS_n^D=\{\sigma\in D_n|~\sigma_1+\sigma_2<0 \mbox{ and }\sigma_1>\sigma_2<\sigma_3>\ldots\}.
\]
If we set $\sigma_0>0$ and define $\cs_D=\#\{i\in [n-1]\cup\{0\}| \sigma_i\cdot\sigma_{i+1}<0\}$. The computer shows the polynomials from $n=2$ to $n=7$ are
\[
	\begin{array}{l|l}
	n=2 & t\\
	n=3 & 3t^2+\underline{2t}\\
	n=4 & 12t^3+\underline{6t^2+5t}\\
	n=5 & \ 60t^4+\underline{30t^3+45t^2}+\underline{16t}\\
	n=6 & 360t^5+\underline{180t^4+390t^3}+\underline{150t^2+61t}\\
	n=7 & 2520t^6+\underline{1260t^5+3570t^4}+\underline{1470t^3+1113t^2}+\underline{272t}.
	\end{array}	
\]
Observe that in the two tables, if we add the coefficients in each underline terms together, the polynomials in the two table have the same distribution. With this observation, it is reasonable to expect there are some relations between the distribution of signed changing on $\SS_n^D$ and the signed counting $\sum_{B_n-B_n^*}(-1)^{\lfloor\frac{\fwex(\sigma)}{2}\rfloor}t^{\nega(\sigma)}$ hiding behind the phenomenon. We would like to know what makes the phenomenon occurs.


\item Recall that in the case of type A there is a notion in some sense dual to crossings which is called nestings. The joint distribution of crossing number and nesting number are symmetric in $\mathfrak{S}_n$, i.e. $(\cro, \nest)$ has the same distribution as $(\nest,\cro)$ in $\mathfrak{S}_n$ (see \cite{Corteel-cross}) for details).    
A type B analogous of this result had been proved in 2011 by Hamdi \cite{Hamdi}. A type B nesting is defined as the following
\begin{defi}[Nestings of type B]
For $\sigma=\sigma_1\sigma_2\cdots\sigma_n\in B_n$, a \emph{nesting} of $\sigma$ is a pair $(i,j)$  with $i,j\ge 1$ such that
\begin{itemize}
  \item $i<j\le \sigma_j< \sigma_i$ or
  \item $-i<j\le \sigma_j< -\sigma_i$ or
  \item $j>i>\sigma_i>\sigma_j$.
\end{itemize}
Denote $\nestB(\sigma)$ the number of nestings in $\sigma$.
\end{defi}

We replace the $q$-derivative in (\ref{q-derivative}) as $(p,q)$-derivative $D_{p,q}$
\begin{equation*}
(D_{p,q}f)(t) := \frac{f(pt)-f(qt)}{(p-q)t},
\end{equation*}
then $D_{p,q}(t^n)=[n]_{p,q}t^{n-1}$ where $[n]_{p,q}=p^{n-1}+p^{n-2}q+\ldots+pq^{n-2}+q^{n-1}$. Similarly, we can also define the $(p,q)$-derivative polynomials $Q_n(t,p,q)$ and $R_n(t,p,q)$. 
\begin{conj}
For $n\ge 1$, we have
\begin{enumerate}
\item  ${\displaystyle
\sum_{\sigma\in B_n}(-1)^{\lfloor \frac{\fwex(\sigma)}{2}\rfloor} t^{\nega(\sigma)}p^{\nest_B(\sigma)}q^{\cro_B(\sigma)}
=\begin{cases}
	(-1)^{\frac{n}{2}}(t+1) R_{n-1}(t,p,q) &\mbox{, if $n$ is odd;}\\
	(-1)^{\frac{n-1}{2}}(t-1) R_{n-1}(t,p,q) &\mbox{, if $n$ is even.}
\end{cases}
}$
\item ${\displaystyle
\sum_{\sigma\in B_n}(-1)^{\lceil \frac{\fwex(\sigma)}{2}\rceil} t^{\nega(\sigma)}p^{\nest_B(\sigma)}q^{\cro_B(\sigma)}=\begin{cases}
	(-1)^{\frac{n}{2}}(t-1)R_{n-1}(t,p,q) & \mbox{ if $n$ is even;}\\
	(-1)^{\frac{n+1}{2}}(t+1)R_{n-1}(t,p,q) & \mbox{ if $n$ is odd.} 
			\end{cases}	.
}$
\end{enumerate}
\end{conj}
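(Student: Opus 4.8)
The plan is to transcribe the proof of Theorem~\ref{thm:signFwexB} essentially verbatim, carrying a second parameter $p$ that records $\nestB$ alongside the parameter $q$ that records $\croB$. Write $B_n(y,t,p,q)=\sum_{\sigma\in B_n}y^{\fwex(\sigma)}t^{\nega(\sigma)}p^{\nestB(\sigma)}q^{\croB(\sigma)}$. The first task is to obtain a $J$-fraction expansion $\sum_{n\ge 0}B_n(y,t,p,q)x^n=\mathfrak{F}(\mu_h,\lambda_h)$ refining Theorem~\ref{thm:B-enumerator-cf}, by $(p,q)$-deforming the type~B Matrix Ansatz of Corteel--Josuat-Vergès--Williams (replacing $DE=qED+D+E$ by a relation recording nestings as well). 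Hamdi's symmetry of $(\croB,\nestB)$ on $B_n$ makes it natural to expect that the resulting $\mu_h,\lambda_h$ are exactly the expressions of Theorem~\ref{thm:B-enumerator-cf} with every $[k]_q$ replaced by $[k]_{p,q}$ and every power $q^j$ by a monomial $p^iq^{j'}$ of the same total degree, symmetric under $p\leftrightarrow q$. Flajolet's Theorem~\ref{thm:Flajolet} would then realize $B_n(y,t,p,q)$ as the weight generating function of a $(p,q)$-analogue $\M_n^{p,q}$ of $\M_n$, and extending the Corteel--Josuat-Vergès--Kim bijection gives $\sum_{\sigma\in B_n}y^{\fwex(\sigma)}t^{\nega(\sigma)}p^{\nestB(\sigma)}q^{\croB(\sigma)}=\rho(\M_n^{p,q})$.

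Next I would carry the two structural steps through unchanged. Proposition~\ref{pro:restructure} uses only the step-replacement rules, not the particular monomials, so it yields a two-to-one map $\Phi:\M_n^{p,q}\to\H_{n-1}^{p,q}$ with $\rho(\M_n^{p,q})=(y^2+yt)\rho(\H_{n-1}^{p,q})$, where $\H_n^{p,q}$ carries the $(p,q)$-deformed weight scheme of $\H_n$. One then defines the sign-reversing involution $\Psi_1$ on $\H_n^{p,q}$ as in Algorithm~A: step (A1) swaps a level step of weight $w$ with a level step of the opposite colour and weight $y^2w$ for the \emph{same} monomial $w$ in $p,q$, and step (A2) swaps a matching-pair weight $(y^2w,ytw'')$ with $(ytw',w_V)$, where $w',w_V$ are forced by $w\cdot w''=w'\cdot w_V$ as monomials. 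Exactly as before, $\Psi_1$ preserves the power of $t$, preserves the $p$- and $q$-degrees, and changes the weight by the precise factor $y^{\pm2}$. Its fixed-point set $\F_n^{p,q}$ is the $(p,q)$-analogue of $\F_n$, and comparing with the $(p,q)$-continued fraction for $R_n(t,p,q)$ (obtained from the $(p,q)$-derivative Matrix Ansatz, refining Theorem~\ref{thm:QR-continued-fraction}) should give $\rho(\F_n^{p,q})=y^nR_n(t,p,q)$, the analogue of Lemma~\ref{lem:Fn=y^nRn}.

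Finally I would conclude exactly as in the proof of Theorem~\ref{thm:signFwexB}. Since $\fwex$ involves neither $p$ nor $q$, the identities expressing $\sum_{\sigma\in B_n}(-1)^{\lfloor\fwex(\sigma)/2\rfloor}t^{\nega(\sigma)}p^{\nestB(\sigma)}q^{\croB(\sigma)}$ (resp. with $\lceil\cdot\rceil$) as the real part plus (resp. minus) the imaginary part of $(y^2+yt)\rho(\H_{n-1}^{p,q})\big|_{y=\sqrt{-1}}$ are unchanged; by the involution this evaluation equals $(y^2+yt)\rho(\F_{n-1}^{p,q})\big|_{y=\sqrt{-1}}=(y^{n+1}+y^nt)R_{n-1}(t,p,q)\big|_{y=\sqrt{-1}}$. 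Splitting by the parity of $n$ and taking real and imaginary parts yields the two stated formulas.

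The main obstacle is the first step together with the compatibility it must satisfy: producing the correct $(p,q)$-refinement of the type~B $J$-fraction, and simultaneously the correct $(p,q)$-weighting of $\H_n^{p,q}$, so that the involution $\Psi_1$ can still be arranged to change the weight by \emph{exactly} $y^{\pm2}$ while preserving the $p$- and $q$-degrees individually. Concretely, at each height $h$ one needs a degree-preserving bijection between the two families of matching-pair weights $\{(y^2w,ytw'')\}$ and $\{(ytw',w_V)\}$; Hamdi's $p\leftrightarrow q$ symmetry strongly suggests the right monomial sets, but pinning them down — and verifying that the fixed-point weights really reproduce $R_n(t,p,q)$ rather than some other $(p,q)$-deformation — is the delicate part. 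Once the bicolored-path model and its weight scheme are in hand, the involution argument and the evaluation at $y=\sqrt{-1}$ are routine, being word-for-word the $p=1$ case already treated in the paper.
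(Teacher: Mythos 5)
The statement you are addressing appears in the paper only as a conjecture in the final discussion chapter; the paper offers no proof of it, so the only thing to measure your argument against is the paper's proof of the $p=1$ case (Theorem \ref{thm:signFwexB}). Your plan mirrors that proof faithfully, and the final reduction --- evaluating $(y^2+yt)\rho(\H_{n-1}^{p,q})$ at $y=\sqrt{-1}$ and splitting into real and imaginary parts --- would indeed go through verbatim once the earlier ingredients exist. But as you yourself concede, what you have written is a programme rather than a proof, and the steps you defer are exactly the ones carrying all of the content.

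Concretely, three things are asserted but not established. First, you need a $(p,q)$-refinement of Theorem \ref{thm:B-enumerator-cf}, i.e.\ a $J$-fraction for $\sum_n B_n(y,t,p,q)x^n$. Hamdi's theorem gives the symmetry of the joint distribution of $(\croB,\nestB)$ over $B_n$, but symmetry of a bivariate distribution does not determine it, and in particular does not by itself force the coefficients $\mu_h,\lambda_h$ to arise from the substitution $[k]_q\mapsto[k]_{p,q}$. What you actually need is that the Corteel--Josuat-Verg\`es--Kim bijection $\Gamma$ sends $\nestB$ to the complementary exponent in each weight choice (so that a step whose weight ranges over $q^0,\dots,q^{m-1}$ instead ranges over $p^{m-1}q^0,\dots,p^0q^{m-1}$); this is known for the type A Foata--Zeilberger correspondence but must be verified for the full-pignose, type B version, and you do not do so. Second, you need the continued fraction for $R_n(t,p,q)$ as defined via the $(p,q)$-derivative, refining Theorem \ref{thm:QR-continued-fraction}; this is a separate computation, since the commutation relation and the Matrix Ansatz solution both change in the two-parameter setting. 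Third, the involution $\Psi_1$ must exchange weight families of equal cardinality whose member-by-member \emph{products} agree as monomials in both $p$ and $q$ separately, not merely in total degree; in the $p=1$ case the exchange $(y^2q^a,ytq^{h+1+b})\leftrightarrow(ytq^{h+1+a},q^b)$ works because the two products coincide as powers of $q$, but with two parameters this imposes a genuine constraint on the weight scheme that you have not pinned down. Until these are settled the argument is a plausible strategy, not a proof --- which is presumably why the paper leaves the statement as a conjecture.
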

If the conjecture holds, naturally we have the derivation of type D from the conjecture. However, we haven't formulate the conjecture of similar signed counting identities for set $B_n^*$ of type B derangements.

\item Another possible direction for future research is to generalize our signed counting results to colored permutations $\mathbb{Z}_r\wr\mathfrak{S}_n$. The results without parameter $t$,$q$ has been prove by Athanasiadis \cite{A} as a byproduct of studying the $\gamma$-nonnegativity on Eulerian polynomial of $\mathbb{Z}_r\wr\mathfrak{S}_n$. Other paper which might be useful is \cite{Shin2014}.
\end{enumerate}


\end{document}